\newlength{\w}
\definecolor{hrefcolor}{rgb}{0.0,0.5,0.8}
\definecolor{hlgreen}{rgb}{0,0.7,0}
\newenvironment{enumroman}
    {\renewcommand{\theenumi}{(\roman{enumi})}
     \renewcommand{\labelenumi}{(\roman{enumi})}
     \begin{enumerate}
        \setlength{\leftmargin}{3.0em}
        \setlength{\labelwidth}{2.5em}
        \setlength{\labelsep}{0.5em}
    }
    {\end{enumerate}}
\newcounter{remcount}
\newtheorem{theorem}{Theorem}
\newtheorem{corollary}{Corollary}
\newtheorem{lemma}{Lemma}
\newtheorem{proposition}{Proposition}
\theoremstyle{definition}
\newtheorem{definition}{Definition}
\newtheorem{assumption}{Assumption}
\newtheorem*{assumption*}{Assumption}
\newtheorem{remark}{Remark}
\newtheorem*{remark*}{Remark}
\newtheorem*{definition*}{Definition}
\numberwithin{equation}{section}
\numberwithin{lemma}{section}
\numberwithin{theorem}{section}
\numberwithin{proposition}{section}
\numberwithin{definition}{section}
\numberwithin{remark}{section}
\numberwithin{example}{section}
\numberwithin{assumption}{section}
\numberwithin{algorithm}{section}
\numberwithin{corollary}{section}
\newcommand*{\doi}[1]{doi:\href{http://dx.doi.org/#1}{\detokenize{#1}}}
\newcommand{\term}{\emph}
\newcommand{\field}[1]{\mathbb{#1}}
\newcommand{\Z}{\mathbb{Z}}
\newcommand{\R}{\field{R}}
\newcommand{\B}{B}
\newcommand{\norm}[1]{\|#1\|}
\newcommand{\abs}[1]{|#1|}
\newcommand{\inv}[1]{#1^{-1}}
\newcommand{\grad}[1]{\nabla #1}
\newcommand{\Union}\bigcup
\newcommand{\Isect}\bigcap
\newcommand{\union}\cup
\newcommand{\isect}\cap
\newcommand{\bigunion}\bigcup
\newcommand{\bigisect}\bigcap
\newcommand{\defeq}{:=}
\newcommand{\downto}{\searrow}
\DeclareMathOperator*{\lip}{lip}
\DeclareMathOperator{\ri}{ri}
\DeclareMathOperator{\closure}{cl}
\DeclareMathOperator{\diam}{diam}
\def \uminus@sym{\setbox0=\hbox{$\cup$}\rlap{\hbox 
        to\wd0{\hss\raise0.5ex\hbox{$\scriptscriptstyle{-}$}\hss}}\box0}
    \def \uminus    {\mathrel{\uminus@sym}}
\newcommand{\mathvar}[1]{\textup{#1}}
\def\Xint#1{\mathchoice
{\XXint\displaystyle\textstyle{#1}}%
{\XXint\textstyle\scriptstyle{#1}}%
{\XXint\scriptstyle\scriptscriptstyle{#1}}%
{\XXint\scriptscriptstyle\scriptscriptstyle{#1}}%
\!\int}
\def\XXint#1#2#3{{\setbox0=\hbox{$#1{#2#3}{\int}$ }
\vcenter{\hbox{$#2#3$ }}\kern-.6\wd0}}
\def\dashint{\Xint-}
\renewcommand{\tilde}{\widetilde}
\renewcommand{\ae}{a.e.~}
\newcommand{\iprod}[2]{\langle #1,#2\rangle}
\DeclareMathOperator{\Sym}{Sym}
\newcommand{\Tensor}{\mathcal{T}}
\newcommand{\BDspace}{\mathvar{BD}}
\newcommand{\BVspace}{\mathvar{BV}}
\newcommand{\Eabs}{\mathcal{E}}
\newcommand{\Ejump}{E^j}
\newcommand{\Ecantor}{E^c}
\newcommand{\Meas}{\mathcal{M}}
\renewcommand{\H}{\mathcal{H}}
\renewcommand{\L}{\mathcal{L}}
\newcommand{\restrict}{\llcorner}
\newcommand{\BD}{\partial}
\renewcommand{\d}{\,d} 
\newcommand{\TGV}{\mathvar{TGV}}
\newcommand{\TGVns}{\mathvar{nsTGV}}
\newcommand{\TV}{\mathvar{TV}}
\newcommand{\nonsym}{\text{ns}}
\newcommand{\sym}{\text{s}}
\newcommand{\BALL}{V}
\DeclareMathOperator*{\esssup}{ess\,sup}
\DeclareMathOperator{\support}{supp}
\DeclareMathOperator{\divergence}{div}
\def \weaktostar@sym{\setbox0=\hbox{$\rightharpoonup$}\rlap{\hbox 
        to\wd0{\hss\raise1ex\hbox{$\scriptscriptstyle{*\,}$}\hss}}\box0}
    \def \weaktostar    {\mathrel{\weaktostar@sym}}
\newcommand{\alphavec}{{\vec \alpha}}
\newcommand{\Uspace}{L^1(\Omega)}
\newcommand{\BVUspace}{\BVspace(\Omega)}
\newcommand{\EUspace}{\Meas(\Omega; \R^m)}
\newcommand{\Wspace}{L^1(\Omega; \R^m)}
\newcommand{\BVWspace}{\BVspace(\Omega; \R^m)}
\newcommand{\EWspace}{\Meas(\Omega; \Sym^2(\R^m))}
\newcommand{\DWspace}{\Meas(\Omega; \Tensor^2(\R^m))}
\newcommand{\jacobian}{\mathcal{J}}
\newcommand{\jacobianf}[3]{\mathcal{J}_{#1} #2(#3)}
\newcommand{\jacobianF}[2]{\mathcal{J}_{#1} #2}
\DeclareMathOperator{\DIFF}{d}
\renewcommand{\norm}[1]{\|#1\|}
\let\hatold=\hat
\let\tildeold=\tilde
\renewcommand{\hat}[1]{\mathchoice{\widehat{#1}}{\widehat{#1}}{\hatold#1}{\hatold#1}}
\renewcommand{\tilde}[1]{\mathchoice{\widetilde{#1}}{\widetilde{#1}}{\tildeold#1}{\tildeold#1}}
\newcommand{\Dfull}{D}
\newcommand{\Dabs}{\grad}
\newcommand{\Djump}{D^j}
\newcommand{\Dcantor}{D^c}
\newcommand{\ICTV}{\mathvar{ICTV}}
\DeclareMathOperator{\IC}{\square}
\newcommand{\allLambda}{\isect}
\newcommand{\pf}[1]{#1{}_{\#}}
\def\Xint#1{\mathchoice
{\XXint\displaystyle\textstyle{#1}}%
{\XXint\textstyle\scriptstyle{#1}}%
{\XXint\scriptstyle\scriptscriptstyle{#1}}%
{\XXint\scriptscriptstyle\scriptscriptstyle{#1}}%
\!\int}
\def\XXint#1#2#3{{\setbox0=\hbox{$#1{#2#3}{\int}$ }
\vcenter{\hbox{$#2#3$ }}\kern-.6\wd0}}
\def\dashint{\Xint-}
\newcommand{\bitransfull}[2]{T_{#1,#2}}
\newcommand{\bitrans}[2]{G_{#1,#2}}
\newcommand{\bitransjac}[2]{J_{#1,#2}}
\newcommand{\lipjac}[1]{A_{#1}}
\newcommand{\didtrans}[1]{D_{#1}}
\newcommand{\didtransjac}[1]{J_{#1}}
\newcommand{\ntdis}[1]{\overline M_{#1}}
\newcommand{\fst}[1]{\overline#1}
\newcommand{\snd}[1]{\underline#1}
\newcommand{\gammaone}{{\fst\gamma}}
\newcommand{\gammatwo}{{\snd\gamma}}
\newcommand{\uone}{{\overline u}}
\newcommand{\utwo}{{\underline u}}
\newcommand{\LipClass}{\mathcal{F}}
\newcommand{\gammax}[3]{\gamma_{#1,#3}}
\newcommand{\loc}{\mathrm{loc}}
\newcommand{\linear}{\mathcal{L}}
\newcommand{\RCa}{R^a}
\newcommand{\bvdiff}[2]{\hat{#1}_{#2}}
\newcommand{\partpf}[3]{\pf#1\llbracket#2,#3\rrbracket}
\begin{document}

\title{The jump set under geometric regularisation.\\
    \normalsize{Part 2: Higher-order approaches}
    }

\author{Tuomo Valkonen\thanks{
    Research Center on Mathematical Modeling,
    Escuela Politécnica Nacional de Quito, Ecuador.
    \\
    E-mail: \texttt{tuomo.valkonen@iki.fi}.
    }
    }

\maketitle

\begin{abstract}
In Part 1, we developed a new technique based 
on Lipschitz pushforwards for proving the jump set containment 
property $\H^{m-1}(J_u \setminus J_f)=0$ of solutions $u$ to total
variation denoising. We demonstrated that the technique
also applies to Huber-regularised $\TV$. Now, in this Part 2,
we extend the technique to higher-order regularisers. We
are not quite able to prove the property for total generalised
variation (TGV) based on the symmetrised gradient for the second-order
term. We show that the property holds under three conditions: First,
the solution $u$ is locally bounded. Second, the second-order variable
is of locally bounded variation, $w \in \BVspace_\loc(\Omega; \R^m)$, 
instead of just bounded  deformation, $w \in \BDspace(\Omega)$.
Third, $w$ does not jump on $J_u$ parallel to it.
The second condition can be achieved for non-symmetric TGV. 
Both the second and third condition can be achieved
if we change the Radon (or $L^1$) norm of the symmetrised gradient
$Ew$ into an $L^p$ norm, $p>1$, in which case Korn's inequality holds.
We also consider the application of the technique to
infimal convolution $\TV$, and study the limiting behaviour of
the singular part of $D u$, as the second parameter of $\TGV^2$ 
goes to zero. Unsurprisingly, it vanishes, but in numerical
discretisations the situation looks quite different.
Finally, our work additionally includes a result on
$\TGV$-strict approximation in $\BVspace(\Omega)$.

\paragraph{Mathematics subject classification:} 
    26B30,  
    49Q20,  
    65J20.  
    
\paragraph{Keywords:} 
    bounded variation,
    higher-order,
    regularisation,
    jump set,
    TGV,
    ICTV.
    
\end{abstract}

\section{Introduction}

We introduced in Part 1 \cite{tuomov-jumpset} the
\emph{double-Lipschitz comparability condition} 
of a regularisation functional $R$. Roughly
\begin{equation}
    \label{eq:double-lip}
    R(\pf\gammaone u) + R(\pf\gammatwo u) - 2R(u) 
    \le
    \bitransfull{\gammaone}{\gammatwo} \abs{Du}(\closure U),
\end{equation}
whenever $\gammaone,\gammatwo: \Omega \to \Omega$ are bi-Lipschitz 
transformations reducing to the identity outside $U \subset \Omega$.
Constructing specific Lipschitz shift transformations around
a point $x \in J_u$, for which the constant
$\bitransfull{\gammaone}{\gammatwo}=O(\rho^2)$ for $\rho>0$ the 
size of the shift, we were able to prove the jump set containment
\begin{equation}
    \label{eq:no-jumps}
    \tag{J}
    \H^{m-1}(J_u \setminus J_f)=0
\end{equation}
for $u \in \BVspace(\Omega)$ the solution of the denoising or 
regularisation problem
\begin{equation}
    \label{eq:prob}
    \tag{P}
    \min_{u \in \BVspace(\Omega)} 
        \int_\Omega \phi(\abs{f(x)-u(x)}) \d x + R(u).
\end{equation}
The admissible fidelities $\phi$ include here $\phi(t)=t^p$ 
for $1 < p < \infty$. For $p=1$ we produced somewhat weaker results
comparable to those for total variation ($\TV$) in \cite{duval2009tvl1}.
The admissible regularisers $R$ included, obviously, total
variation, for which the property was already proved previously
by level set techniques \cite{caselles2008discontinuity}. We also
showed the property for Huber-regularised total variation as a 
new contribution besides the technique. If we non-convex total
variation models and the Perona-Malik anisotropic diffusion
were well-posed, we demonstrated that the technique would also
apply to them.

The development of the new technique was motivated by higher-order
regularisers, in particular by total generalised variation ($\TGV$,
\cite{bredies2009tgv}), for which the level set technique is not
available due to the lack of a co-area formula. In this Part 2,
we now aim to extend our Lipschitz pushforward technique to
variants of $\TGV$ as well as infimal convolution TV
($\ICTV$, \cite{chambolle97image}). In order to do this, we need
to modify the double-Lipschitz comparability criterion 
\eqref{eq:double-lip} a little bit. Namely, we will in
Section \ref{sec:problem} introduce rigorously a 
\emph{partial double-Lipschitz comparability condition}
of the form
\begin{equation}
    \label{eq:double-lip-shift}
    R(\pf\gammaone(u - v) + v) + R(\pf\gammatwo(u-v) + v) - 2R(u) 
    \le
    \bitransfull{\gammaone}{\gammatwo} \abs{D(u-v)}(\closure U)
    +\text{small terms}.
\end{equation}
Here, in comparison to \eqref{eq:double-lip}, we have subtracted
$v$ from $u$ before the pushforward. The idea is the
same as in the application the jump set containment result
for $\TV$ to prove it for $\ICTV$. Namely, as we may recall
\[
    \ICTV_{\alphavec}(u) \defeq
        \min_{v \in W^{1,1}(\Omega), \grad v \in \BVspace(\Omega; \R^m)}
        \alpha \norm{Du-\grad v}_{2,\Meas(\Omega; \R^m)}
        + \beta \norm{D\grad v}_{F,\Meas(\Omega; \R^{m \times m})},
\]        
where $\alphavec=(\beta,\alpha)$.
Now, if $u$ solves \eqref{eq:prob} for $R=\ICTV_{\alphavec}$, then
$u$ solves
\[
    \min_{u \in \BVspace(\Omega)}
        \int_\Omega \phi(\abs{f(x)-u(x)}) \d x + \norm{Du-\grad v}_{2,\Meas(\Omega; \R^m)}.
\]
with $v$ fixed. Otherwise written, $\bar u=u-v$ solves for $\bar f=f-v$
the total variation denoising problem
\[
    \min_{u \in \BVspace(\Omega)}
        \int_\Omega \phi(\abs{\bar f(x)-\bar u(x)}) \d x + \norm{D\bar u}_{2,\Meas(\Omega; \R^m)}.
\]
Since $v \in W^{1,1}(\Omega)$ has no jumps, $J_{\bar f}=J_f$, 
the fact \eqref{eq:no-jumps} that $\ICTV$ introduces no jumps 
follows from the corresponding result for $\TV$. 

The idea with $v$ in \eqref{eq:double-lip-shift} is roughly the 
same as this: to remove the second-order information from the problem,
and reduce it into a first-order one. However, unlike in the case 
of $\ICTV$, generally, we cannot reduce the problem to $\TV$. 
Indeed, written in the differentiation cascade formulation 
\cite{sampta2011tgv}, second-order $\TGV$ reads as
\begin{equation}
    \label{eq:tgv2-def}
    \TGV^2_{\alphavec}(u) \defeq
        \min_{w \in \BDspace(\Omega)}
        \alpha \norm{Du-w}_{2,\Meas(\Omega; \R^m)}
        + \beta \norm{Ew}_{F,\Meas(\Omega; \Sym^2(\R^m)}.
\end{equation}
Here $\BDspace(\Omega)$ is the space of vector fields of 
\emph{bounded deformation} on $\Omega$, and $Ew$ the
symmetrised gradient. Now, we do not generally have
$w=\grad v$ for any function $v$, which is the reason
that the analysis is not as simple as that of $\ICTV$.
Standard $\TGV^2$ is also significantly complicated 
by the symmetrised gradient $Ew$, and we cannot obtain
as strong results for it, our results depending on
assumptions on $w$. Namely, we need that $w$
is ``BV-differentiable'', or, in practise that
$w \in \BVspace_\loc(\Omega; \R^m)$ instead of just
$w \in \BDspace(\Omega)$, and that the projection
$P_{z_\Gamma}^\perp(w^+(x)-w^-(x))=0$, on a Lipschitz
graph $\Gamma$, representing $J_u$, parametrised on the 
plane orthogonal to $z_\Gamma \in \R^m$. 
These complications make us firstly consider the non-symmetric 
variant of $\TGV^2$, $\TGVns^2$, where $Ew$ in \eqref{eq:tgv2-def}
is replaced by $Dw$ and $w \in \BVspace(\Omega; \R^m)$.
Secondly, we consider variants of $\TGV^2$ employing 
for the second-order term $L^q$ energies, $q>1$. These
have the advantage that Korn's inequality holds. 
For all of these variants, and for ICTV, we obtain
stronger results than for $\TGV^2$ itself.

Our analysis of the specific regularisation functionals
is in Section \ref{sec:reg} after we study local
approximability of $w \in \BDspace(\Omega)$ and
approximability in terms of $\TGV$-strict convergence
in Section \ref{sec:approx}.
The analysis of the fidelity term $\int_\Omega \phi(\abs{f(x)-u(x)}) \d x$
is unchanged from Part 1 \cite{tuomov-jumpset}, and therefore
the main lemma is only cited in Section \ref{sec:problem},
where we state our assumptions on $R$ and $\phi$,
and prove \eqref{eq:no-jumps} for \eqref{eq:prob}
by combining the separate estimates for the fidelity and regularity terms.
We concentrate on $p$-increasing fidelities for $1 < p < \infty$.
The case $p=1$ from Part 2 could also be extended, but we have
chosen to concentrate on the case $p > 1$ where stronger results
exist. As an addendum to this qualitative study, we also study 
quantitatively in Section \ref{sec:lptgv2-bounds}
the limiting behaviour of the singular part $D^s u$ of $D u$
for $\TGV^2$ as $\beta \downto 0$. The behaviour is quite surprising, 
as on the discrete scale $\TGV^2$ appears to preserve jumps in 
the limit, but analysis shows that the jumps disappear.

The class of problems \eqref{eq:prob} is of importance, in particular, for 
image denoising. We wish to know the structure of $J_u$ in order to see
that the denoising problem does not introduce undesirable artefacts,
new edges, which in images model different materials and depth information.
Higher-order geometric and other recently introduced image regularisers
such a $\TGV$ \cite{bredies2009tgv}, $\ICTV$ \cite{chambolle97image}, 
Euler's elastica \cite{chan2002euler,shen2003euler}, and many
other variants \cite{lysaker2003noise,burger2012regularized,
papafitsoros2012combined,chan2000high,DFLM2009,didas2009properties,
bertozzi2004low} are, in fact, motivated by one serious artefact
of the conventional total variation regulariser. This is the 
stair-casing effect. Further, non-convex total variation schemes
and ``lower-order fidelities'' such as Meyer's G-norm and the 
Kantorovich-Rubinstein norm, have recently received increased 
attention in an attempt to, respectively, better model real image
gradient statistics \cite{huang1999statistics,HiWu13_siims,HiWu14_coap,ochsiterated,tuomov-tvq}
or texture \cite{meyer2002oscillating,vese2003modelingtextures,tuomov-krtv}.
Very little is known about any of these analytically. For $\TGV^2$
we primarily have the results on one-dimensional domains in
\cite{l1tgv,papafitsoros2013study}. We hope
that our work in this pair of papers provides an impetus and
roots for a technique for the study of many of these and future
approaches. We begin our study after going through the obligatory
preliminaries in the following Section \ref{sec:prelim}.
We finish the study with a few final words in Section \ref{sec:conclusion}.

\section{Notations and useful facts}
\label{sec:prelim}

We begin by introducing the tools necessary for our work.
Much of this material is the same as in Part 1 \cite{tuomov-jumpset}; 
we have however decided to make this manuscript to be mostly self-contained,
legible without having to delve into the extensively detailed 
analysis of Part 1.
We will also include additional information on tensor fields and 
functions of bounded deformation, $\BDspace(\Omega)$. These are
crucial for the  definition of $\TGV$. First we introduce basic 
notations for sets, mappings, measures, and tensors. We then move
on to tensor fields and Lipschitz mappings and graphs. Finally, 
we discuss distributional gradients of tensor fields, which allow
us to define bounded variation and deformation in a unified
way.

\subsection{Basic notations}

We denote by $\{e_1,\ldots,e_m\}$ the standard basis of $\R^m$.
The boundary of a set $A$ we denote by $\BD A$, and the 
closure by $\closure A$.  The $\{0,1\}$-valued indicator function
we write as $\chi_A$.
We denote the open ball of radius $\rho$ centred at $x \in \R^m$ 
by $\B(x, \rho)$. We denote by $\omega_m$ the volume of the unit 
ball $\B(0, 1)$ in $\R^m$.

For $z \in \R^m$, we denote by
$z^\perp \defeq \{ x \in \R^m \mid \iprod{z}{x}=0\}$
the hyperplane orthogonal to $\nu$ , whereas
$P_z$ denotes the projection operator onto the subspace
spanned by $z$, and $P_z^\perp$ the projection onto 
$z^\perp$. If $A \subset z^\perp$, we denote by
$\ri A$ the \emph{relative interior} of $A$ in
$z^\perp$ as a subset of $\R^m$.

Let $\Omega \subset \R^m$ be an open set.
We then denote the space of (signed) 
Radon measures on $\Omega$ by $\Meas(\Omega)$. 
If $V$ is a vector space, then the space of
Radon measures on $\Omega$ with values in $V$ 
is denoted $\Meas(\Omega; V)$.
The $k$-dimensional Hausdorff measure,
on any given ambient space $\R^m$, ($k \le m$), 
is denoted by $\H^k$,
while $\L^m$ denotes the Lebesgue measure on $\R^m$.
The total variation (Radon) norm of a measure $\mu$ is denoted 
$\norm{\mu}_{\Meas(\R^m)}$. 

For vector-valued measures
$\mu \in \Meas(\Omega; \R^k)$, we use the notation
$\norm{\mu}_{q, \Meas(\R^m)}$ to indicate that the
finite-dimensional base norm is the $q$-norm. We
use the same notation for vector fields $w \in L^p(\Omega; \R^k)$,
namely
\[
    \norm{w}_{q, L^p(\Omega)} \defeq \left(\int_\Omega \norm{w(x)}_q^p \d x \right)^{1/p}.
\]    

For a measurable set $A$, we  denote by $\mu \restrict A$ the 
restricted measure defined
by $(\mu \restrict A)(B) \defeq \mu(A \isect B)$.
The notation $\mu \ll \nu$ means that $\mu$ is absolutely
continuous with respect to the measure $\nu$, and $\mu \perp \nu$ 
that $\mu$ and $\nu$ are mutually singular. The singular and
absolutely continuous (with respect to the Lebesgue measure)
part of $\mu$ are denoted $\mu^a$ and $\mu^s$, respectively.

We denote the $k$-dimensional upper resp.~lower density
of $\mu$ by
\[
    \Theta^*_k(\mu; x) \defeq \limsup_{\rho \downto 0} \frac{\mu(\B(x, \rho))}{\omega_k \rho^k},
    \quad
    \text{resp.}
    \quad
    \Theta_{*,k}(\mu; x) \defeq \liminf_{\rho \downto 0} \frac{\mu(\B(x, \rho))}{\omega_k \rho^k}.
\]
The common value, if it exists, we denote by $\Theta_k(\mu; x)$. 

Finally, we often denote by $C$, $C'$, $C'''$
arbitrary positive constants,
and use the plus-minus notation 
$a^\pm=b^\pm$ in to mean that both $a^+=b^+$ and $a^-=b^-$ hold.

\subsection{Lipschitz and $C^1$ graphs}

A set $\Gamma \subset \R^m$ is called a Lipschitz ($m-1$)-graph
(of Lipschitz factor $L$), if there exist a unit vector $z_\Gamma$,
an open set $V_\Gamma \subset z_\Gamma^\perp$, and a Lipschitz map 
$f_\Gamma: V_\Gamma \to \R$, of Lipschitz factor at most $L$,
such that
\[
    \Gamma = \{ v+f_\Gamma(v)z_\Gamma \mid v \in V_\Gamma \}.
\]
If $f_\Gamma \in C^1(V_\Gamma)$, we cal $\Gamma$ a $C^1$ ($m-1$)-graph.
We also define $g_\Gamma: V_\Gamma \to \R^m$ by
\[
    g_\Gamma(v)=v+z_\Gamma f_\Gamma(v).
\]
Then
\[
    \Gamma = g_\Gamma(V_\Gamma).
\]

We denote the open domains ``above'' and ``beneath''
$\Gamma$, respectively, by
\[
    \Gamma^+ \defeq \Gamma+(0,\infty)z_\Gamma,
    \quad
    \text{and}
    \quad
    \Gamma^- \defeq \Gamma+(-\infty, 0)z_\Gamma.
\]
We recall
that by Kirszbraun's theorem, we may extend 
the domain of $g_\Gamma$ from $V_\Gamma$
to the whole space $z_\Gamma^\perp$ without
altering the Lipschitz constant. Then
$\Gamma$ splits $\Omega$ into the two open halves
$\Gamma^+ \isect \Omega$ and $\Gamma^- \isect \Omega$.
We often use this fact.

\subsection{Mappings from a subspace}

We denote by $\linear(V; W)$ the space of linear maps between
the vector spaces $V$ and $W$.
If $L \in \linear(V; \R^k)$, where $V \sim \R^n$, ($n \le k$),
is a finite-dimensional Hilbert space,
Then $L^* \in \linear(\R^k; V^*)$ denotes the adjoint,
and the $n$-dimensional Jacobian is defined as
\cite{ambrosio2000fbv}
\[
    \jacobian_{n}[L] \defeq \sqrt{\det (L^* \circ L)}.
\]
With the gradient of a Lipschitz function $f: V \to \R^k$
defined in ``components as columns order'', 
$\grad f(x) \in \linear(\R^k; V)$, we extend this 
notation for brevity as
\[
    \jacobianf{n}{f}{x} \defeq \jacobian_{n}[(\grad f(x))^*].
\]
If $\Omega \subset V$ is a measurable set, 
and $g \in L^1(\Omega)$, the \term{area formula} 
may then be stated 
\begin{equation}
    \label{eq:areaformula}
    \int_{\R^k} \sum_{x \in \Omega \isect \inv f(y)} g(x) \d\H^{n}(y)
    = \int_\Omega g(x) \jacobianf{n}{f}{x} \d \H^n(x).
\end{equation}
That this indeed holds in our sitting of finite-dimensional Hilbert
spaces $V \sim \R^n$ follows by a simple argument from the area formula
for $f: \R^n \to \R^k$, stated in, e.g, \cite{ambrosio2000fbv}. 
We only use the cases $V = z^\perp$ for some $z \in \R^m$ ($n=m-1$),
or $V=\R^m$ ($n=m$).

We also denote by
\[
    C^{2,\allLambda}(V) \defeq \Isect_{\lambda \in (0,1)} C^{2,\lambda}(V)
\]
the class of functions that are twice differentiable (as defined above
for tensor fields) with a Hölder continuous second differential for
all exponents $\lambda \in (0,1)$.

The Lipschitz factor of a Lipschitz mapping $f$ we denote by $\lip f$.
We also recall that a Lipschitz transformation $\gamma: U \to V$ with
$U, V \subset \R^m$ has the \term{Lusin $N$-property} if it maps
$\L^m$-negligible sets to $\L^m$-negligible sets.

If $\gamma: \Omega \to \Omega$ is a 1-to-1 Lipschitz transformation, 
and $u: \Omega \to \Omega$ a Borel function, we define the 
pushforward $u_\gamma \defeq \pf\gamma u \defeq u \circ \gamma^{-1}$.
Finally, we denote the identity transformation by $\iota(x)=x$.

\subsection{Tensors and tensor fields}

We now introduce tensors and tensor fields. We simplify the
treatment from its full differential-geometric setting, as can
be found in, e.g., \cite{bishop1980tensor}, as we are working 
on finite-dimensional Hilbert spaces.
These definitions and our approach to defining $\TGV^2$ follow 
that in \cite{tuomov-dtireg}.

We let $V_1,\ldots,V_k$ be finite-dimensional Hilbert spaces,
$V_j \sim \R^{m_j}$ with corresponding bases
$\{e_1^j,\ldots,e_{m_j}^j\}$, ($j=1,\ldots,k$).
A $k$-tensor is then a $k$-linear mapping 
$A: V_1 \times \cdots \times V_k \to \R$. 
We denote $A \in \Tensor(V_1,\ldots,V_k)$.
If $V_j=V$ for all $j=1,\ldots,k$, we write
$\Tensor^k(V) \defeq \Tensor(V_1,\ldots,V_k)$.
A symmetric tensor $A \in \Sym^k(V) \subset \Tensor^k(V)$ satisfies
for any permutation $\pi$ of $\{1,\ldots,k\}$ and
any $c_1,\ldots,c_k \in V$ that
$A(c_{\pi 1},\ldots,c_{\pi k})=A(c_1,\ldots,c_k)$,
For conciseness of notation, we often identify $V \sim \Tensor^1(V)$ 
through the mapping $V(x) = \iprod{V}{x}$.

For a $A \in \Tensor(V_1,\ldots,V_k)$
and $B \in \Tensor(V_{k+1},\ldots,V_{k+m})$
we define the $(m+k)$-tensor $A \otimes B \in \Tensor(V_1,\ldots,V_{k+m})$ 
by
\[
    (A \otimes B)(c_1,\ldots,c_{k+m})
    =A(c_1,\ldots,c_k)B(c_{k+1},\ldots,c_{k+m}).
\]

We define on $A, B \in \Tensor(V_1,\ldots,V_k)$ the inner product
\[
    \iprod{A}{B} \defeq \sum_{p_1=1}^{m_1} \cdots \sum_{p_k=1}^{m_k}
            A(e_{p_1}^1,\ldots,e_{p_k}^k)
            B(e_{p_1}^1,\ldots,e_{p_k}^k),
\]
and the Frobenius norm
\[
    \norm{A}_F \defeq \sqrt{\iprod{A}{A}}.
\]
If $k=1$ ,we simply denote $\norm{A} \defeq \norm{A}_2 \defeq \norm{A}_F$,
as the Frobenius norm agrees with the Euclidean norm.

Let then $u: \Omega \to \Tensor(V_1, \ldots, V_k)$ be a Lebesgue-measurable 
function on the domain $\Omega \subset V_0$, where $V_0 \sim \R^m$ is 
also a finite-dimensional Hilbert space. We define the norms
\[
    \norm{u}_{F,p}
    \defeq
    \Bigl(\int_\Omega \norm{u(x)}_F^p \d x\Bigr)^{1/p}\hspace{-3ex},
    \hspace{4ex} (p \in [1,\infty)),
    \quad
    \text{ and}
    \quad
    \norm{u}_{F,\infty}
    \defeq
        \esssup_{x \in \Omega} \norm{u(x)}_F,
\]
and the spaces
\begin{align}
    \notag
    L^p(\Omega; \Tensor(V_1,\ldots,V_k))
    &
    =\{u: \Omega \to \Tensor(V_1,\ldots,V_k) \mid u \text{ Borel},\, \norm{u}_{F,p} < \infty\},
    \quad
    (p \in [1,\infty]).
\end{align}
The spaces $L^p(\Omega; \Tensor^k(V))$ and $L^p(\Omega; \Sym^k(V))$ 
are defined analogously.


\subsection{Distributional gradients and tensor-valued measures}

For the definition of total generalised variation (TGV), 
we need to define the concept of a tensor-valued measure,
as well as the distributional differential $Du$ and the symmetrised
distributional $Eu$ on tensor fields. This is done now.
If the reader is satisfied with a cursory understanding of TGV,
this subsection may be skipped.

We start with tensor field divergences.
Let $u \in C^1(\Omega; \Tensor(V_1,\ldots,V_k))$, ($k\ge 0$).
The (Fr{\'e}chet) differential 
$\DIFF f(x) \in \Tensor(V_0, V_1,\ldots,V_k)$ at $x \in \Omega$ 
is defined by the limit
\[
    \lim_{h \to 0} \frac{\norm{f(x+h)-f(x)-\DIFF f(x)(h,\cdot,\ldots,\cdot)}_F}{\norm{h}_F} = 0.
\]
If $k \ge 1$, if $V_0=V_1$, we define the divergence, 
$\divergence u \in C(\Omega; \Tensor(V_2,\ldots,V_k))$ by contraction as
\[
    [\divergence u(x)](c_2,\ldots,c_k)
    \defeq
    \sum_{i=1}^{m_1}
    \DIFF u(x)(\xi_i^1,\xi_i^1, c_2, \ldots,c_k).
\]
Observe that if $u$ is symmetric, then so is $\divergence u$.
Moreover Green's identity
\[
    \int_{\Omega} \iprod{\DIFF u(x)}{\phi(x)} \d x
    =
    \int_{\Omega} \iprod{u(x)}{- \divergence \phi(x)} \d x
\]
holds for  $u \in C^1(\Omega; \Tensor(V_2,\ldots,V_k))$
and $\phi \in C^1_0(\Omega; \Tensor(V_1,\ldots,V_k))$
with $\Omega \subset V_1=V_0$.

Denoting by $X^*$ the continuous linear functionals
on the topological space $X$, we now define 
the distributional gradient
\[
    D u \in [C_c^\infty(\Omega; \Tensor^{k+1}(\R^m))]^*
\]
of $u \in L^1(\Omega; \Tensor^k(\R^m))$ by
\[
    D u(\varphi) \defeq -\int_\Omega \iprod{u(x)}{\divergence \varphi(x)} \d x,
    \quad
    (\varphi \in C_c^\infty(\Omega; \Tensor^{k+1}(\R^m))).
\]
Likewise we define the symmetrised distributional gradient
\[
    E u \in [C_c^\infty(\Omega; \Sym^{k+1}(\R^m))]^*
\]
of $u \in L^1(\Omega; \Tensor^k(\R^m))$ by
\[
    E u(\varphi) \defeq -\int_\Omega \iprod{u(x)}{\divergence \varphi(x)} \d x,
    \quad
    (\varphi \in C_c^\infty(\Omega; \Sym^{k+1}(\R^m))).
\]
We also define the ``Frobenius unit ball''
\[
    \BALL^{k}_{F,\nonsym} \defeq \{ \varphi \in C_c^\infty(\Omega; \Tensor^{k}(\R^m)) \mid \norm{\varphi}_{F,\infty} \le 1 \}.
\]
and the ``symmetric Frobenius unit ball''
\[
    \BALL^{k}_{F,\sym} \defeq \{ \varphi \in C_c^\infty(\Omega; \Sym^{k}(\R^m)) \mid \norm{\varphi}_{F,\infty} \le 1 \}.
\]
For our purposes it then suffices to define a tensor-valued 
measure $\mu \in \Meas(\Omega; \Tensor^{k}(\R^m))$ as a linear functional
$\mu \in [C_c^\infty(\Omega; \Tensor^{k}(\R^m))]^*$
bounded in the sense that the total variation norm
\[
    \norm{\mu}_{F,\Meas(\Omega; \Tensor^{k}(\R^m))} \defeq \sup\{\mu(\varphi) \mid  \varphi \in \BALL^{k}_{F,\nonsym}\} < \infty.
\]
For a justification of this definition, we refer to \cite{federer1969gmt}.
The definition of a symmetric measure $\mu \in \Meas(\Omega; \Sym^{k}(\R^m))$
is analogous with $\mu \in [C_c^\infty(\Omega; \Sym^{k}(\R^m))]^*$ and
\[
    \norm{\mu}_{F,\Meas(\Omega; \Sym^{k}(\R^m))} \defeq \sup\{\mu(\varphi) \mid  \varphi \in \BALL^{k}_{F,\sym}\} < \infty.
\]
It follows that $Du$ and $Eu$ are measures when they are bounded
on $\BALL^{k}_{F,\nonsym}$ and $\BALL^{k}_{F,\sym}$, respectively.
Observe that for $k=0$, it holds
$\Meas(\Omega; \Tensor^{0}(\R^m))=\Meas(\Omega; \Sym^{0}(\R^m))
=\Meas(\Omega)$, and for $k=1$, it holds
\[
    \Meas(\Omega; \Tensor^{1}(\R^m))=\Meas(\Omega; \Sym^{1}(\R^m))
    =: \EUspace.
\]

\begin{remark}
The choice of the Frobenius norm as the
finite-dimensional norm in the above definitions, indicated by
the subscript $F$, ensures isotropy and a degree of
rotational invariance for tensor fields. Some alternative
rotationally invariant norms, generalising the nuclear and the spectral 
norm  for matrices, are discussed \cite{tuomov-dtireg}.
\end{remark}

\subsection{Functions of bounded variation}

We say that a function $u: \Omega \to \R$ on a bounded 
open set $\Omega \subset \R^m$, is
of \term{bounded variation} (see, e.g., \cite{ambrosio2000fbv}
for a more thorough introduction),
denoted $u \in \BVspace(\Omega)$, if $u \in L^1(\Omega)$, 
and the distributional gradient $\Dfull u$ is a Radon measure.
Given a sequence $\{u^i\}_{i=1}^\infty \subset \BVspace(\Omega)$, 
weak* convergence is defined as $u^i \to u$ strongly in $L^1(\Omega)$
along with $\Dfull u^i \weaktostar \Dfull u$ weakly* in 
$\Meas(\Omega)$. The sequence converges \emph{strictly} if,
in addition to this, $\abs{Du^i}(\Omega) \to \abs{Du}(\Omega)$.

We denote by $S_u$ the approximate discontinuity set, i.e.,
the complement of the set where the Lebesgue limit $\tilde u$ exists. 
The latter is defined by
\[
    \lim_{\rho \downto 0}  \frac{1}{\rho^m}
        \int_{\B(x, \rho)} \norm{\tilde u(x) - u(y)} \d y = 0.
\]

The distributional gradient can
be decomposed as $\Dfull u = \Dabs u \L^m + \Djump u + \Dcantor u$, where
the density $\Dabs u$ of the \term{absolutely continuous part} of
$\Dfull u$ equals (a.e.) the approximate differential of $u$.
We also define the \term{singular part} as $D^s u=D^j u + D^c u$.
The \term{jump part} $\Djump u$ may be represented as
\begin{equation}
    \notag
    \Djump u = (u^+ - u^-) \otimes \nu_{J_u} \H^{m-1} \restrict J_u,
\end{equation}
where $x$ is in the \term{jump set} $J_u \subset S_u$ of $u$ if for some 
$\nu \defeq \nu_{J_u}(x)$ there exist two \emph{distinct} one-sided
traces $u^+(x)$ and $u^-(x)$, defined as satisfying
\begin{equation}
    \notag
    \lim_{\rho \downto 0}  \frac{1}{\rho^m}
        \int_{\B^\pm(x, \rho, \nu)} \norm{u^\pm(x) - u(y)} \d y= 0,
\end{equation}
where
$\B^\pm(x, \rho, \nu) := \{ y \in \B(x,\rho) \mid \pm \iprod{y-x}{\nu} \ge 0\}$.
It turns out that $J_u$ is countably $\H^{m-1}$-rectifiable and $\nu$ is
(a.e.) the normal to $J_u$. This former means that
there exist Lipschitz $(m-1)$-graphs $\{\Gamma_i\}_{i=1}^\infty$ such
that $\H^{m-1}(J_u \setminus \Union_{i=1}^\infty \Gamma_i)=0$.
Moreover, we have $\H^{m-1}(S_u \setminus J_u)=0$. 
The remaining \term{Cantor part} $\Dcantor u$ vanishes
on any Borel set $\sigma$-finite with respect to $\H^{m-1}$.

We will depend on the following basic properties of
densities of $Du$; for the proof, see, e.g.,
\cite[Proposition 3.92]{ambrosio2000fbv}.

\begin{proposition}
    \label{prop:bv-density}
    Let $u \in \BVspace(\Omega)$ for an open domain $\Omega \subset \R^m$.
    Define
    \[
        \tilde S_u \defeq \{x \in \Omega \mid \Theta_{*,m}(\abs{Du}; x)=\infty\},
        \quad
        \text{and}
        \quad
        \tilde J_u \defeq \{x \in \Omega \mid \Theta_{*,m-1}(\abs{Du}; x) >0\}.
    \]
    Then the following decomposition holds.
    \begin{enumroman}
        \item $\Dabs u = D u \restrict (\Omega \setminus \tilde S_u)$.
        \item $\Djump u = D u \restrict \tilde J_u$, 
            precisely $\tilde J_u \supset J_u$, and $\H^{m-1}(\tilde J_u \setminus J_u)=0$.
        \item $\Dcantor u = Du \restrict (\tilde S_u \setminus \tilde J_u)$.
    \end{enumroman}
\end{proposition}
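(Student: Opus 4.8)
The plan is to take as given the decomposition $\Dfull u = \Dabs u\,\L^m + \Djump u + \Dcantor u$ together with the structural facts recalled above---that $J_u$ is countably $\H^{m-1}$-rectifiable with $\Djump u = (u^+-u^-)\otimes\nu\,\H^{m-1}\restrict J_u$, that $\H^{m-1}(S_u\setminus J_u)=0$, and that $\Dcantor u$ vanishes on every Borel set that is $\sigma$-finite with respect to $\H^{m-1}$---and to identify each of the three pieces through the two density functionals. First I would dispatch (i) via the Besicovitch differentiation theorem applied to the positive Radon measure $\abs{\Dfull u}$: the full density $\Theta_m(\abs{Du}; x)$ exists, is finite, and equals the Radon--Nikodym density of the absolutely continuous part for $\L^m$-a.e.\ $x$, whereas the singular part $D^s u = \Djump u + \Dcantor u$ is concentrated on a set $N$ with $\L^m(N)=0$ on which $\Theta_{*,m}(\abs{Du}; x)=\infty$. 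Since $\tilde S_u=\{x:\Theta_{*,m}(\abs{Du};x)=\infty\}$ is thus $\L^m$-negligible and carries all of $D^s u$, the restriction $\Dfull u\restrict(\Omega\setminus\tilde S_u)$ retains exactly the absolutely continuous part, giving (i), and at the same time $\Dfull u\restrict\tilde S_u = D^s u$.

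For (ii) I would proceed in three moves. The inclusion $J_u\subset\tilde J_u$ up to an $\H^{m-1}$-null set follows from the jump representation: by rectifiability the $(m-1)$-density of $\H^{m-1}\restrict J_u$ is $1$ at $\H^{m-1}$-a.e.\ point of $J_u$, so $\Theta_{m-1}(\abs{Du}; x)=\abs{u^+(x)-u^-(x)}>0$ there. Next, stratifying $\tilde J_u=\Union_{k}\{x:\Theta_{*,m-1}(\abs{Du};x)>1/k\}$ and invoking Federer's density--comparison estimate $\abs{Du}(A)\ge c\,t\,\H^{m-1}(A)$ on any Borel set where the upper $(m-1)$-density is at least $t$, I would conclude that each stratum has locally finite $\H^{m-1}$-measure, so that $\tilde J_u$ is $\sigma$-finite with respect to $\H^{m-1}$; consequently $\Dcantor u\restrict\tilde J_u=0$. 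Moreover, positive lower $(m-1)$-density forces $\abs{Du}(\B(x,\rho))/(\omega_m\rho^m)\ge c/\rho\to\infty$, so $\tilde J_u\subset\tilde S_u$ and hence $\L^m(\tilde J_u)=0$, whence $\Dabs u\restrict\tilde J_u=0$. These three facts leave $\Dfull u\restrict\tilde J_u=\Djump u\restrict\tilde J_u=\Djump u$, proving (ii).

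Finally, to establish $\H^{m-1}(\tilde J_u\setminus J_u)=0$ I would argue that on the $\H^{m-1}$-$\sigma$-finite set $\tilde J_u\setminus J_u$ the absolutely continuous part vanishes (as $\L^m(\tilde J_u)=0$), the Cantor part vanishes (by $\sigma$-finiteness), and the jump part vanishes (being concentrated on $J_u$), so that $\abs{Du}(\tilde J_u\setminus J_u)=0$. But every point of this set has strictly positive lower, hence upper, $(m-1)$-density, so the same density--comparison estimate yields $\H^{m-1}(\tilde J_u\setminus J_u)=0$. Statement (iii) then follows by subtraction, using $\tilde J_u\subset\tilde S_u$: $\Dcantor u = D^s u-\Djump u = \Dfull u\restrict\tilde S_u-\Dfull u\restrict\tilde J_u=\Dfull u\restrict(\tilde S_u\setminus\tilde J_u)$. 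I expect the main obstacle to be the careful bookkeeping between the two density scales---ensuring the comparison inequalities are applied with the correct exponent and that $\sigma$-finiteness is genuinely inherited through the stratification---since it is precisely the mismatch between the $m$- and $(m-1)$-dimensional densities that separates the Cantor part, which lives on an $\L^m$-null but $\H^{m-1}$-non-$\sigma$-finite set, from the jump part.
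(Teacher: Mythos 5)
Your proof is correct, but it is worth noting that the paper does not prove this proposition at all: it simply cites \cite[Proposition~3.92]{ambrosio2000fbv}. What you have written is essentially a faithful reconstruction of the standard argument from that reference -- Besicovitch differentiation for (i), the density-comparison estimate (\cite[Theorem~2.56]{ambrosio2000fbv}, $\mu \ge t\,\H^{k}\restrict B$ on Borel sets where the upper $k$-density is at least $t$) together with the stratification $\tilde J_u = \Union_k \{x : \Theta_{*,m-1}(\abs{Du};x) > 1/k\}$ for the $\sigma$-finiteness and the killing of $\Dcantor u$, and the observation $\tilde J_u \subset \tilde S_u$ to eliminate $\Dabs u$ -- so there is nothing here that genuinely diverges from the intended proof. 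Your bookkeeping between the $m$- and $(m-1)$-dimensional scales is handled correctly, including the key step $\abs{Du}(\tilde J_u \setminus J_u) = 0 \Rightarrow \H^{m-1}(\tilde J_u \setminus J_u) = 0$ via the same comparison estimate applied stratum by stratum.

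One small shortfall: the proposition asserts the \emph{exact} inclusion $\tilde J_u \supset J_u$, whereas your first move in (ii) -- using that the $(m-1)$-density of $\H^{m-1}\restrict J_u$ equals $1$ at $\H^{m-1}$-a.e.\ point of a rectifiable set -- only yields the inclusion up to an $\H^{m-1}$-null set. This is harmless for the measure identities in (ii) and (iii), since $\abs{\Djump u} \ll \H^{m-1}\restrict J_u$, so $\Djump u \restrict \tilde J_u = \Djump u$ survives, but to get the pointwise inclusion you should instead argue at every $x \in J_u$ by blow-up: the rescalings $u_\rho(y) \defeq u(x+\rho y)$ converge in $L^1(\B(0,1))$ to the two-valued jump function, and lower semicontinuity of the total variation gives
\[
    \liminf_{\rho \downto 0} \frac{\abs{Du}(\B(x,\rho))}{\omega_{m-1}\rho^{m-1}}
    \ge \abs{u^+(x)-u^-(x)} > 0,
\]
so every jump point lies in $\tilde J_u$. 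With that repair (and the routine remark that $x \mapsto \Theta_{*,m-1}(\abs{Du};x)$ is Borel, so your strata are measurable), the argument is complete.
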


We will require the following property of the traces along a Lipschitz
graph $\Gamma$.

\begin{lemma}[Part 1]
    \label{lemma:z-u}
    Let $u \in \BVspace(\Omega)$. Then there exists a Borel set $Z_u$
    with $\H^{m-1}(Z_u)=0$ such that
    every $x \in J_u \setminus Z_u$ is a Lebesgue
    point of the one-sided traces $u^\pm$, and
    \[
        \Theta^*_{m-1}(\abs{Du} \restrict (\Gamma^x)^+; x) = 0,
        \text{ and }
        \Theta^*_{m-1}(\abs{Du} \restrict (\Gamma^x)^-; x) = 0
    \]
    for a Lipschitz $(m-1)$-graph $\Gamma^x$, which satisfies
    the following.
    Firstly
    \[
        V_{\Gamma^x} \supset \B(P_{z_\Gamma}^\perp x, r(x))
    \]
    for some $r(x)>0$. Secondly the traces of $u$ at $x$ exist
    from both sides of $\Gamma^x$ and agree with $u^\pm(x)$.
\end{lemma}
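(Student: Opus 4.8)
The plan is to construct $Z_u$ as a countable union of $\H^{m-1}$-null sets, obtained by discarding from $J_u$ every point at which one of several ``good point'' properties fails, and to take $\Gamma^x$ to be one of the Lipschitz graphs that cover $J_u$ up to an $\H^{m-1}$-null set. First I would invoke the countable $\H^{m-1}$-rectifiability of $J_u$ recalled above: there are Lipschitz $(m-1)$-graphs $\{\Gamma_i\}$ with $\H^{m-1}(J_u\setminus\Union_i\Gamma_i)=0$. I disjointify by setting $M_i\defeq(J_u\isect\Gamma_i)\setminus\Union_{j<i}\Gamma_j$, so that $J_u=N\cup\bigsqcup_i M_i$ with $\H^{m-1}(N)=0$; the piece $N$ goes into $Z_u$. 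For $x\in M_i$ I take $\Gamma^x\defeq\Gamma_i$ and $z_{\Gamma^x}\defeq z_{\Gamma_i}$. The ball condition $V_{\Gamma^x}\supset\B(P_{z_{\Gamma^x}}^\perp x,r(x))$ then holds automatically with $r(x)\defeq\operatorname{dist}(P_{z_{\Gamma_i}}^\perp x,\,\partial V_{\Gamma_i})>0$, since $x\in\Gamma_i$ forces $P_{z_{\Gamma_i}}^\perp x$ into the open set $V_{\Gamma_i}$.

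The heart of the argument is the density estimate $\Theta^*_{m-1}(\abs{Du}\restrict(\Gamma^x)^\pm;x)=0$. I would use the decomposition into mutually singular parts $\abs{Du}=\abs{\Dabs u}\L^m+\abs{\Djump u}+\abs{\Dcantor u}$ and handle each term restricted to $(\Gamma^x)^\pm$ separately. For the absolutely continuous part, $\abs{\Dabs u}\L^m$ vanishes on every set of finite (hence $\sigma$-finite) $\H^{m-1}$ measure, because $\H^{m-1}(B)<\infty$ forces $\L^m(B)=0$; the Cantor part $\abs{\Dcantor u}$ likewise vanishes on $\H^{m-1}$-$\sigma$-finite sets by the property recalled above. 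By the density comparison theorem \cite[Theorem~2.56]{ambrosio2000fbv}, any positive measure vanishing on $\H^{m-1}$-$\sigma$-finite sets has $\Theta^*_{m-1}=0$ at $\H^{m-1}$-a.e.\ point, so outside an $\H^{m-1}$-null set both contribute zero, even before restricting to the half-spaces. For the jump part the crucial point is that $(\Gamma^x)^\pm=(\Gamma_i)^\pm$ are the open regions strictly above and below $\Gamma_i$, whence $J_u\isect(\Gamma^x)^\pm\subset J_u\setminus\Gamma_i\subset J_u\setminus M_i$. At $\H^{m-1}$-a.e.\ $x\in M_i$ the rectifiable set $J_u$ has density one and the trace $\abs{u^+-u^-}$ admits a Lebesgue point with respect to $\H^{m-1}\restrict J_u$, so the jump measure concentrates on $M_i\subset\Gamma_i$ and its restriction to $J_u\setminus\Gamma_i$ has vanishing $(m-1)$-density; adding the exceptional points to $Z_u$ yields $\Theta^*_{m-1}(\abs{\Djump u}\restrict(\Gamma^x)^\pm;x)=0$.

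For the trace and Lebesgue-point claims I would appeal to the Federer--Vol'pert structure of $J_u$: at $\H^{m-1}$-a.e.\ $x\in J_u$ the one-sided traces $u^\pm(x)$ are realised as Lebesgue limits over the half-balls $\B^\pm(x,\rho,\nu)$, and $x$ is a Lebesgue point of the $\H^{m-1}$-measurable trace maps $u^\pm$ on $J_u$. At such $x$ the approximate normal $\nu_{J_u}(x)$ coincides with $\pm z_{\Gamma_i}$ and $\H^{m-1}\restrict J_u$ has density one, so $\Gamma_i$ is tangent to $J_u$ at $x$; consequently the symmetric difference between the curved half-space $(\Gamma^x)^\pm\isect\B(x,\rho)$ and the flat half-ball $\B^\pm(x,\rho,\nu)$ is $\L^m$-negligible to order $o(\rho^m)$, and the side-traces of $u$ along $\Gamma^x$ therefore exist and equal $u^\pm(x)$. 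Collecting all the finitely many families of exceptional sets produces the desired $\H^{m-1}$-null $Z_u$.

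I expect the main obstacle to be this last tangency step: matching the traces taken from the two sides of the genuinely curved Lipschitz graph $\Gamma^x$ with the traces $u^\pm(x)$ defined through flat half-balls, and fixing the orientation $z_{\Gamma_i}=\pm\nu_{J_u}(x)$ consistently so that ``$+$'' and ``$-$'' really correspond. Since the density bookkeeping for the jump part rests on the same asymptotic separation of $J_u$ from $\Gamma^x$, getting the quantitative tangency right is what ties the whole argument together.
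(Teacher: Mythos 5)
The manuscript you were given does not prove this lemma at all --- it is quoted from Part 1 --- so your reconstruction can only be compared with the standard argument, which it matches in architecture: rectifiable covering of $J_u$, disjointification into pieces $M_i$, the density comparison theorem \cite[Theorem~2.56]{ambrosio2000fbv} for the measure bounds, Besicovitch differentiation for the Lebesgue-point property of the traces, and Rademacher-plus-tangency for the trace matching. One genuine simplification is worth recording: you do not need to split $\abs{Du}$ into absolutely continuous, jump and Cantor parts at all. For fixed $i$, the measure $\mu_i^\pm \defeq \abs{Du} \restrict (\Gamma_i)^\pm$ is finite and vanishes on $M_i$, since the open sides $(\Gamma_i)^\pm$ are disjoint from $\Gamma_i \supset M_i$; applying the comparison theorem to $B_t \defeq M_i \isect \{\Theta^*_{m-1}(\mu_i^\pm;\cdot) \ge t\}$ yields $t\,\H^{m-1}(B_t) \le \mu_i^\pm(B_t) = 0$, so $\Theta^*_{m-1}(\mu_i^\pm; x)=0$ at $\H^{m-1}$-a.e.\ $x \in M_i$ in one stroke. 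This also replaces your loose phrase that ``the jump measure concentrates on $M_i$'' (it does not; all that matters is that its restriction to the open sides is null on $M_i$), and it sidesteps the weighted-density bookkeeping for $\abs{u^+-u^-}$ that your sketch of the jump part would otherwise require.

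There is, however, one assertion in your tangency step that is false as written: you claim that at good points $\nu_{J_u}(x)$ ``coincides with $\pm z_{\Gamma_i}$''. The approximate normal at $x = v + f_{\Gamma_i}(v)z_{\Gamma_i}$ coincides (a.e.) with the normal to the \emph{graph}, whose $z_{\Gamma_i}$-component is only bounded below by $(1+L^2)^{-1/2}$; it equals $\pm z_{\Gamma_i}$ precisely where $\grad f_{\Gamma_i}(v)=0$, which may happen nowhere on $\Gamma_i$. Had you actually used half-balls oriented along $z_{\Gamma_i}$, the symmetric difference with $(\Gamma^x)^\pm \isect \B(x,\rho)$ would be of order $\norm{\grad f_{\Gamma_i}(v)}\rho^m$, not $o(\rho^m)$, and the trace matching would fail on a potentially full-measure set. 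Fortunately your very next sentence invokes the correct mechanism, so the repair is local: by Rademacher, $f_{\Gamma_i}$ is differentiable $\L^{m-1}$-a.e., the Lipschitz map $g_{\Gamma_i}$ sends the null exceptional set to an $\H^{m-1}$-null subset of $\Gamma_i$, and at the remaining points the locality of approximate tangent spaces gives that the tangent plane of $\Gamma_i$ at $x$ equals $\nu_{J_u}(x)^\perp$; since $\abs{\iprod{\nu_{J_u}(x)}{z_{\Gamma_i}}}>0$, the two sides of the graph then differ from the half-balls $\B^\pm(x,\rho,\nu_{J_u}(x))$ by $o(\rho^m)$, the Lebesgue limits transfer, and the sign of $\iprod{\nu_{J_u}(x)}{z_{\Gamma_i}}$ fixes which side carries $u^+$. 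With this correction (and the cosmetic remark that the $\H^{m-1}$-null exceptional set can always be enlarged to a \emph{Borel} null set, as the statement demands), your proof is complete and follows what is, as far as the present paper reveals, the same route as Part 1.
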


\subsection{Functions of bounded deformation}

Similarly to the definition of a function
of bounded variation, a function $w \in L^1(\Omega; \R^m)$ for
a domain $\Omega \subset \R^m$ is said to be of a
\emph{vector field (or function) of bounded deformation}, 
if the distributional symmetrised gradient $Ew \in \Meas(\Omega; \Sym^2(\R^m))$
\cite{temam1985mpp}. We denote this space by $\BDspace(\Omega)$.
The concept can also be generalised to tensor fields of higher 
orders \cite{bredies2014symmetric}, useful for the definition
of $\TGV^k$ for $k > 2$.

Similar to $\BVspace$, we have the decomposition \cite{ambcosdal96}
\[
    Ew=\Eabs w \L^m + \Ejump w + \Ecantor w,
\]
where $\Eabs w$ is the absolutely continuous part.
For smooth functions
\[
    \Eabs w(x)=\frac{1}{2}\bigl(\grad w(x)+[\grad w(x)]^T\bigr).
\]
Generally this expression holds at points of \emph{approximate differentiability} 
of $w$, at $\L^m$-\ae $x \in \Omega$ \cite{ambcosdal96,hajlasz1996approximate}.
The jump part satisfies
\[
    \Ejump w = \frac{1}{2}\bigl(\nu_{J_u} \otimes (w^+ - w^-) + (w^+ - w^-) \otimes \nu_{J_w} \bigr) \H^{m-1} \restrict J_w,
\]
where the one-sided traces $w^\pm$, the jump set $J_w$ and its approximate
normal $\nu_{J_w}$ are as in the case of functions bounded variation.
Likewise, the Cantor part vanishes on any Borel set $\sigma$-finite 
with respect to $\H^{m-1}$.
Similarly to Proposition \ref{prop:bv-density}, defining
\[
    \tilde J_u \defeq \{x \in \Omega \mid \Theta_{*,m-1}(\abs{Eu}; x) >0\},
\]
we have
\begin{equation}
    \label{eq:bd-density}
    \tilde J_u \supset J_u, \quad\text{and}\quad
    \H^{m-1}(\tilde J_u \setminus J_u)=0.
\end{equation}

Many other results are however not as strong in $\BDspace(\Omega)$
as in $\BVspace(\Omega)$. For one, we only have \cite{ambcosdal96}
$\abs{Ew}(S_w \setminus J_w)=0$ instead of the stronger result
$\H^{m-1}(S_w \setminus J_w)=0$, which were to hold if
$u \in \BVspace(\Omega; \R^m)$. In fact, this result can be made
a little stronger. Namely, $\abs{Ew}(S_w \setminus J_v)=0$
for $v, w \in \BDspace(\Omega)$. 

Instead of Poincaré's inequality in $\BVspace(\Omega; \R^n)$, which says that
on Lipschitz domains we can approximate  zero-mean $\norm{u-\bar u}$ 
for $\bar u = \dashint_\Omega u \d y$ by $C_\Omega \abs{Du}(\Omega)$,
in $\BDspace(\Omega)$ we have the Sobolev-Korn inequality. This says
that there exists a constant $C_\Omega>0$ and for each $w \in \BDspace(\Omega)$
an element $\bar w \in \ker E$ such that
\[
    \norm{w -\bar w}_{2,L^1(\Omega; \R^m)} \le C_\Omega \norm{Ew}_{F,\Meas(\Omega; \Sym^2(\R^m))}.
\]
The kernel of $E$ consists of affine maps $\bar w(x)=Ax + c$
for $A$ a skew-symmetric matrix. The Sobolev-Korn inequality can
also be extended to symmetric tensor fields of higher-order than the 
present $k=1$, in which case the kernel is also a higher-order 
polynomial \cite{bredies2014symmetric}.

We will not really need these latter properties. The point is that
$\BDspace(\Omega)$ has significantly weaker regularity than
$\BVspace(\Omega; \R^m)$. This will have implications
to our work. What we will use is Korn's inequality,
which holds for $1<p<\infty$ but \emph{notoriously} not for $p=1$.
The form most suitable for our purposes, easily obtainable
from the versions in \cite{ambcosdal96,conti2005new,ciarlet2010korn},
states the existence of a constant $C_{\Omega,q}>0$ such that
\begin{equation}
    \label{eq:korn-inequality}
    \int_\Omega \norm{\grad w(x)}_F^q \d x \le 
    C_{\Omega,q}
        \int_\Omega \norm{\Eabs w(x)}_F^q \d x,
\end{equation}
for bounded domains $\Omega$, 
and vector fields $w \in W_0^{1,q}(\Omega; \R^m)$.
Our reason for the zero boundary condition, as opposed
to $\Omega=\R^m$, a Sobolev-Korn type $\norm{\grad(w-\bar w)(x)}_F^q$ 
on the left, or extra $\norm{w}_{2,L^q(\Omega; \R^m)}$ on the right,
is that in our application, we do not want to \emph{directly} enforce 
$w \in L^q(\Omega; \R^m)$. This will typically however follow 
a posteriori from \eqref{eq:korn-inequality} and the
Gagliardo-Nirenberg-Sobolev inequality.

\section{Problem statement}
\label{sec:problem}

Before stating our main results rigorously, we introduce
our assumptions on regularisation functionals and fidelities.
The definition of an admissible regularisation functional,
and our assumptions on the fidelity $\phi$ are unchanged from
Part 1, but we replace the double-Lipschitz comparability by
a notion of partial double-Lipschitz comparability, and limit
the set of admissible Lipschitz transformations to one that
operates along a specific direction.

\subsection{Admissible regularisation functionals and fidelities}

We begin by stating our assumptions on $R$, which are formulated
in Definition \ref{def:admissible} and Definition \ref{def:lipschitz-trans}.

\begin{definition}
    \label{def:admissible}
    We call $R$ an \emph{admissible regularisation functional}
    on $\Uspace$, where the domain $\Omega \subset \R^m$,
    if it is convex, lower semi-continuous with respect to weak*
    convergence in $\BVUspace$, 
    and there exist $C, c > 0$ such that
    \begin{equation}
	\label{eq:r-tv-bound}
	\norm{Du}_{2,\EUspace} \le C\bigl(1+\norm{u}_{\Uspace}+R(u)\bigr),
	\quad (u \in \Uspace).
    \end{equation}
\end{definition}

The next two technical definitions will be required by Definition \ref{def:lipschitz-trans}.

\begin{definition}
    We denote by $\LipClass(\Omega)$ the set of one-to-one Lipschitz
    transformations $\gamma: \Omega \to \Omega$ with
    $\inv \gamma$ also Lipschitz and both satisfying the Lusin $N$-property.
    With $U \subset \Omega$ an open set, and $z \in \R^m$ a unit vector, we set
    \[
        \begin{aligned}
        \LipClass(\Omega, U) & \defeq \{ \gamma \in \LipClass(\Omega) \mid \gamma(x)=x \text{ for } x \not\in U\},
        \quad\text{and}\\
        \LipClass(\Omega, U, z) & \defeq \{ \gamma \in \LipClass(\Omega, U) \mid P_z^\perp \gamma(y)=P_z^\perp y \text{ for all } y \in \Omega\}.
        \end{aligned}
    \]
    With $\gammaone, \gammatwo \in \LipClass(\Omega)$, 
    we then define the \term{basic double-Lipschitz comparison constants}
    \[
        \bitrans{\gammaone}{\gammatwo} 
        \defeq
        \sup_{x \in \Omega, \norm{v}=1} \norm{\lipjac{\gammaone}(x) v} + \norm{\lipjac{\gammatwo}(x) v} - 2\norm{v}.
    \]
    and
    \[
        \bitransjac{\gammaone}{\gammatwo} 
        \defeq
        \sup_{x \in \Omega} \abs{\jacobianf{m}{\gammaone}{x} + \jacobianf{m}{\gammatwo}{x} - 2}.
    \]
    Here the norm is the operator norm, $I$ the identity mapping on $\R^m$, and
    \[
        \lipjac{\gamma}(x) \defeq \grad \inv \gamma(\gamma(x)) \jacobianf{m}{\gamma}{x}.
    \]
    We also define the \emph{distances-to-identity}
    \[
        \didtrans{\gamma} \defeq \sup_{x \in \Omega} \norm{\grad \inv \gamma(\gamma(x))-I},
        \quad\text{and}\quad
        \didtransjac{\gamma} \defeq \sup_{x \in \Omega} \abs{\jacobianf{m}{\gamma}{x}-1},
    \]
    as well as the \emph{normalised transformation distance}
    \begin{equation}
        \label{eq:ntdis}
        \ntdis{\gamma} \defeq \sup_{\substack{U: \gamma \in \LipClass(\Omega, U) \\ u \in \BVspace(\Omega)}} 
        \int_\Omega \frac{\norm{\pf\gamma u(y) - u(y)}}{\diam(U)\abs{Du}(U)} \d y.
    \end{equation}
    Finally we combine these all into the overall
    \term{double-Lipschitz comparison constant}
    \[
        \bitransfull{\gammaone}{\gammatwo}
        \defeq \bitrans{\gammaone}{\gammatwo}
            + \bitransjac{\gammaone}{\gammatwo}
            + \didtrans{\gammaone}^2
            + \didtrans{\gammatwo}^2
            + \didtransjac{\gammaone}^2
            + \didtransjac{\gammatwo}^2
            + \ntdis{\gammaone}^2
            + \ntdis{\gammatwo}^2.
    \]
\end{definition}

Observe that by Poincaré's inequality, if $\support(\gamma-\iota)$ 
has Lipschitz boundary, then $\ntdis{\gamma} < \infty$ for
$\L^m$-\ae $x \in \Omega$, small enough $r>0$,
and $\gamma \in \LipClass(\Omega, U)$ for $U \subset \B(0, r)$.

\begin{definition}
    Given $u, v \in L^1(\Omega)$, and $\gamma \in \LipClass(\Omega)$,
    we define the \emph{partial pushforward}
    \[
        \partpf{\gamma}{u}{v} \defeq \pf\gamma(u-v)+v.
    \]
\end{definition}

Finally, we may state rigorously our most central concept.

\begin{definition}
    \label{def:lipschitz-trans}
    Let $x_0 \in \Omega$ and $u \in \BVspace(\Omega)$. We say that $R$ is
     \term{partially double-Lipschitz comparable for $u$ at $x_0$},
    if there exists a constant $\RCa>0$
    and a function $v \in W^{1,1}(\Omega)$,
    $x_0 \not\in S_v$,
    satisfying the following: for every $\epsilon>0$, 
    for some $r_0>0$, if $U \subset \B(x_0, r)$, $0<r<r_0$ and
    $\gammaone,\gammatwo \in \LipClass(\Omega, U)$
    with $\bitransfull{\gammaone}{\gammatwo} < 1$, then
    \begin{equation}
        \label{eq:partial-comp}
        R(\partpf{\gammaone}{u}{v})
        + R(\partpf{\gammatwo}{u}{v})
        - 2 R(u) \le 
        \RCa \bitransfull{\gammaone}{\gammatwo} \abs{D(u-v)}(\closure U)
        + (\bitransfull{\gammaone}{\gammatwo}^{1/2} +r)\epsilon r^m.
    \end{equation}
    We also say that $R$ is \term{partially double-Lipschitz comparable 
    at $x_0$ for $u$ in the direction $z$} for some unit vector $z \in \R^m$,
    if \eqref{eq:partial-comp} holds with the change that
    $\gammaone,\gammatwo \in \LipClass(\Omega, U, z)$. 
\end{definition}

\begin{remark}
    Usually $\RCa$ will be a universal constant for $R$, but we do not
    need this in this work. The function $v$ will depend on both $u$
    and $x_0$. The bound $\bitransfull{\gammaone}{\gammatwo} < 1$
    is mostly about aesthetics. We could instead allow
    $\bitransfull{\gammaone}{\gammatwo} < \delta$ for arbitrary
    $\delta>0$; we however cannot allow $\delta$ to be determined
    by $\epsilon>0$ for the proof of our main result 
    Theorem \ref{theorem:jumpset-strict}. It can only depend
    on $u$ and $x_0$ similarly to $v$.
    The only purpose of the bound is to allow the use of the single
    constant $\bitransfull{\gammaone}{\gammatwo}$ in front of both of the
    terms on the right hand side of \eqref{eq:partial-comp}, replacing
    any second-order terms that we might get in front of the remainder
    term $\epsilon r^m$ by first-order terms, which suffice there; 
    compare the proof of Proposition \ref{proposition:tgv2-admissibility}.
    For this the fixed bound suffices: 
    $\bitransfull{\gammaone}{\gammatwo} \le \bitransfull{\gammaone}{\gammatwo}^{1/2}$.
    Instead of this, we could also
    replace $\bitransfull{\gammaone}{\gammatwo}$ by two arbitrary polynomials
    of the square roots of the variables in its definition, the one 
    in front of $\abs{D(u-v)}(\closure U)$ being of lowest order $2$,
    and the one in front of $\epsilon r^m$ of lowest order $1$.
    Then we would not have to bound $\bitransfull{\gammaone}{\gammatwo}<1$.
    The reason for introducing the \emph{normalised} transformation
    distance is likewise aesthetical.
\end{remark}

We will strive to prove the following property of the 
regularisation functionals that we study. We will only
use the more involved case \ref{item:ass:comp-gamma}
in this work.

\begin{assumption}
    \label{ass:comp}
    We assume that $R$ is an admissible regularisation functional on $\Uspace$
    that satisfies the following for every $u \in \BVspace(\Omega)$
    and every Lipschitz $(m-1)$-graph $\Gamma \subset \Omega$.
    \begin{enumroman}
        \item\label{item:ass:comp-omega}
        $R$ is partially double-Lipschitz comparable for $u$ at $\L^m$-\ae $x \in \Omega$.
        \item\label{item:ass:comp-gamma}
        $R$ is partially double-Lipschitz comparable for $u$ in the direction
        $z_\Gamma$ at $\H^{m-1}$-\ae $x \in \Gamma$.
    \end{enumroman}
\end{assumption}

In order to show the existence of solutions to \eqref{eq:prob}, 
we require the following property from $\phi$.

\begin{definition}
    Let the domain $\Omega \subset \R^m$.
    We call $\phi: [0, \infty) \to [0, \infty]$ an \term{admissible fidelity
    function on $\Omega$} if it is convex, lower semi-continuous,
    $\phi(0)=0$, and satisfies for some $C > 0$ the coercivity
    condition
    \begin{equation}
        \label{eq:phi-l1-bound}
        \norm{u}_{\Uspace} \le C \left( \int_\Omega \phi(\abs{u(x)}) \d x + 1 \right),
        \quad
        (u \in \Uspace).
    \end{equation}
    Throughout this paper, we extend the domain of $\phi$ to $\R$
    by defining
    \[
        \phi(t) \defeq \phi(-t), \quad (t<0).
    \]
    This is in order to simplify the notation $\phi(\abs{u(x)})$
    to $\phi(u(x))$.
\end{definition}

For the study of the jump set $J_u$ of solutions to \eqref{eq:prob},
we require additionally the following increase criterion
to be satisfied by $\phi$.

\begin{definition}
    \label{def:phi-increase}
    We say that $\phi$ is \term{$p$-increasing} for $p \ge 1$,
    if there exists a constant $C_\phi>0$ for which
    \[
        \phi(x) - \phi(y) \le C_\phi (x-y)\abs{x}^{p-1},
        \quad
        (x, y \ge 0).
    \]
\end{definition}

As we have seen in Part 1, the functions $\phi(t)=t^p$, ($p \ge 1$),
in particular are $p$-increasing and admissible. Moreover, the 
problem \eqref{eq:prob} is well-posed under the above assumptions. 

\begin{theorem}[Part 1 \& standard]
    Let $f \in \Uspace$ satisfy $\int_\Omega \phi(f(x)) \d x < \infty$. 
    Suppose that $R$ is an admissible regularisation functional
    on $\Uspace$, and $\phi$ an admissible fidelity function for $\Omega$.
    Then there exists a solution $u \in \Uspace$ to \eqref{eq:prob},
    and any solution satisfies $u \in \BVUspace$.
\end{theorem}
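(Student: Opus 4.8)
The plan is the direct method of the calculus of variations. Write the objective as $J(u) \defeq \int_\Omega \phi(f(x)-u(x)) \d x + R(u)$ on $u \in \Uspace$, and set $m^\ast \defeq \inf_{u \in \Uspace} J(u)$ (minimising over $\Uspace$ rather than $\BVUspace$ is harmless, since \eqref{eq:r-tv-bound} will force any finite-energy point into $\BVUspace$ anyway). First I would check that $J$ is proper, i.e.\ $m^\ast<\infty$: since $\phi(0)=0$ and the regularisers at hand are finite at $0$, testing with $u=0$ gives $J(0)=\int_\Omega \phi(f(x)) \d x + R(0) < \infty$ by the hypothesis $\int_\Omega \phi(f) \d x<\infty$. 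As $\phi\ge 0$ and $R\ge 0$, also $m^\ast\ge 0$, so $m^\ast\in[0,\infty)$.

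Next I would establish coercivity, which is the only step requiring genuine care. Fix a minimising sequence $\{u^i\}\subset\Uspace$ with $J(u^i)\to m^\ast$, so $J(u^i)\le M$ for some $M$ and all $i$. Because both terms of $J$ are nonnegative, this bound splits: $R(u^i)\le M$ and $\int_\Omega \phi(f-u^i)\d x\le M$. Applying the fidelity coercivity \eqref{eq:phi-l1-bound} to $f-u^i$ gives $\norm{f-u^i}_\Uspace\le C(M+1)$, whence $\norm{u^i}_\Uspace\le\norm{f}_\Uspace+C(M+1)$ is bounded. Feeding this together with $R(u^i)\le M$ into the admissibility estimate \eqref{eq:r-tv-bound} then bounds $\norm{Du^i}_{2,\EUspace}$. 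Thus $\{u^i\}$ is bounded in $\BVUspace$.

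Finally I would combine compactness with lower semicontinuity. By the compact embedding of $\BVUspace$ into $\Uspace$ on the bounded domain $\Omega$, pass to a subsequence (not relabelled) with $u^i\to u$ strongly in $\Uspace$ and $Du^i\weaktostar Du$, i.e.\ $u^i\to u$ weakly$^*$ in $\BVspace$. The regulariser is lower semicontinuous for this convergence by admissibility, so $R(u)\le\liminf_i R(u^i)$. For the fidelity, extract a further subsequence converging $\L^m$-a.e.; lower semicontinuity and nonnegativity of $\phi$ with Fatou's lemma give $\int_\Omega \phi(f-u)\le\liminf_i\int_\Omega \phi(f-u^i)$. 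Adding the two estimates yields $J(u)\le\liminf_i J(u^i)=m^\ast$, so $u$ is a minimiser. Any minimiser $u$ has $J(u)=m^\ast<\infty$, hence $R(u)<\infty$, and then \eqref{eq:r-tv-bound} forces $\norm{Du}_{2,\EUspace}<\infty$, giving $u\in\BVUspace$.

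The essential point, and the only nonroutine one, is that the two a priori bounds are complementary: the $L^1$-coercivity of $\phi$ controls $\norm{u}_\Uspace$, and the regulariser's bound \eqref{eq:r-tv-bound} then controls $\norm{Du}_{2,\EUspace}$, so that neither alone suffices but together they pin the minimising sequence in $\BVUspace$. Everything else is standard, modulo the fact that $R$ is bounded below so that an upper bound on $J(u^i)$ bounds each summand; for the nonnegative regularisers treated here this is immediate, and in general a constant lower bound would suffice.
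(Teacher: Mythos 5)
Your proposal is correct and is precisely the standard direct-method argument the paper has in mind: the theorem is stated here without proof (cited as ``Part 1 \& standard''), and the intended proof is exactly your chain of properness, the complementary coercivity bounds \eqref{eq:phi-l1-bound} and \eqref{eq:r-tv-bound}, $\BVspace$-compactness on the bounded (Lipschitz) domain, and weak* lower semicontinuity of $R$ plus Fatou for the fidelity. Your closing remarks correctly flag the only implicit hypotheses (properness and a lower bound for $R$, satisfied by all regularisers treated), so nothing is missing.
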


\subsection{Jump set containment}

Our main result in this paper is the following theorem combined
with the corresponding partial double-Lipschitz comparability
estimates for higher-order regularisers in Section \ref{sec:reg}.

\begin{theorem}
    \label{theorem:jumpset-strict}
    Let the domain $\Omega \subset \R^m$ be bounded with Lipschitz boundary,
    and $\phi: [0, \infty) \to [0, \infty)$ be an admissible $p$-increasing
    fidelity function for some $1 < p < \infty$.
    Let $f \in \BVUspace \isect L^\infty_\loc(\Omega)$, and suppose
    $u \in \BVUspace \isect L^\infty_\loc(\Omega)$  solves \eqref{eq:prob}.
    If $R$ satisfies Assumption \ref{ass:comp}\ref{item:ass:comp-gamma}, then
    \[
        \H^{m-1}(J_u \setminus J_f)=0.
    \]
\end{theorem}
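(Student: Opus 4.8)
The plan is to argue by contradiction and to localise everything to a single, carefully chosen jump point, where the (second-order) increase of $R$ afforded by Assumption \ref{ass:comp}\ref{item:ass:comp-gamma} is played against the decrease of the fidelity term guaranteed by the $p$-increasing property and the absence of a jump of $f$. So suppose $\H^{m-1}(J_u \setminus J_f)>0$. Since $J_u$ is countably $\H^{m-1}$-rectifiable, it is covered up to an $\H^{m-1}$-null set by Lipschitz $(m-1)$-graphs, and positive measure must survive on one such graph $\Gamma$; I restrict attention to $(J_u \setminus J_f)\isect\Gamma$. Discarding a further null set, I select a \emph{good} point $x_0$ in this set: by Lemma \ref{lemma:z-u} the one-sided traces $u^\pm(x_0)$ exist and are distinct, the densities $\Theta^*_{m-1}(\abs{Du}\restrict(\Gamma^{x_0})^\pm;x_0)$ vanish, and there is a local graph $\Gamma^{x_0}$; by Proposition \ref{prop:bv-density} the measure $\abs{Du}$ near $x_0$ is dominated by its jump part; and since $\H^{m-1}(S_f\setminus J_f)=0$ and $x_0\notin J_f$, the datum $f$ has an approximate Lebesgue value $\tilde f(x_0)$ at $x_0$. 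Local boundedness of $f$ and $u$ keeps these traces and the resulting fidelity gaps finite, which is what the Part 1 fidelity estimate requires. Applying Assumption \ref{ass:comp}\ref{item:ass:comp-gamma} at $x_0$ in the direction $z\defeq z_\Gamma$ then furnishes the constant $\RCa$ and a function $v\in W^{1,1}(\Omega)$ with $x_0\notin S_v$ for which \eqref{eq:partial-comp} holds for all admissible pairs in $\LipClass(\Omega,U,z)$.

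Next I would construct, exactly as in Part 1, the shift transformations $\gammaone,\gammatwo\in\LipClass(\Omega,\B(x_0,r),z)$ that translate points along $z$ by an amount $\rho=\theta r$ inside $\B(x_0,r)$ and taper to the identity on $\BD\B(x_0,r)$. Moving only parallel to $z$, they satisfy $P_z^\perp\gamma(y)=P_z^\perp y$, so the direction constraint is respected. The point of the Part 1 construction is that the leading effects of the pair cancel in the comparison constant, delivering $\bitransfull{\gammaone}{\gammatwo}=O(\theta^2)<1$ for $\theta$ small, independently of $r$. Since $v$ is Sobolev with a Lebesgue value at $x_0$, subtracting it does not disturb the jump: $u-v$ still jumps by $u^+(x_0)-u^-(x_0)$ at $x_0$, and, using Proposition \ref{prop:bv-density} together with $x_0\notin S_v$, I obtain the density estimate $\abs{D(u-v)}(\closure{\B(x_0,r)})\le C\abs{u^+(x_0)-u^-(x_0)}\,\omega_{m-1}r^{m-1}(1+o(1))$ as $r\downto 0$.

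I then exploit the optimality of $u$ in \eqref{eq:prob} by testing it against the two competitors $\partpf{\gammaone}{u}{v}$ and $\partpf{\gammatwo}{u}{v}$ and adding the two resulting inequalities, which rearranges to
\[
    2\int_\Omega \phi(f-u)\d x
    - \int_\Omega \phi\bigl(f-\partpf{\gammaone}{u}{v}\bigr)\d x
    - \int_\Omega \phi\bigl(f-\partpf{\gammatwo}{u}{v}\bigr)\d x
    \le
    R(\partpf{\gammaone}{u}{v}) + R(\partpf{\gammatwo}{u}{v}) - 2R(u).
\]
The right-hand side I bound from above by \eqref{eq:partial-comp} together with the density estimate above, obtaining $\RCa\bitransfull{\gammaone}{\gammatwo}\abs{D(u-v)}(\closure{\B(x_0,r)}) + (\bitransfull{\gammaone}{\gammatwo}^{1/2}+r)\epsilon r^m \lesssim \RCa\,\theta^2\abs{u^+-u^-}r^{m-1} + (\theta+r)\epsilon r^m$, which is second-order in $\theta$. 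The left-hand side is the fidelity decrease, and here I invoke the fidelity lemma of Part 1 (cited in Section \ref{sec:problem}): because $\phi$ is $p$-increasing and $f$ has no jump at $x_0$ while $u$ does, the lemma quantifies this decrease and shows it outweighs the second-order regularity increase for small shifts.

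The main obstacle is precisely this final balancing. One must verify that the Part 1 construction simultaneously yields a comparison constant $\bitransfull{\gammaone}{\gammatwo}=O(\theta^2)$ \emph{and} a fidelity gain of strictly lower effective order in the shift parameter, so that choosing $\theta$ (and then $r$) small and $\epsilon$ smaller still makes the fidelity decrease strictly exceed the right-hand side, contradicting optimality; the remainder term $(\bitransfull{\gammaone}{\gammatwo}^{1/2}+r)\epsilon r^m$, with its deliberately first-order prefactor, must be shown negligible against this gain. Secondary points to check are that the shifts genuinely lie in $\LipClass(\Omega,\B(x_0,r),z)$ with the claimed constant scaling, and that replacing $u$ by the partial pushforward $\partpf{\gamma_i}{u}{v}$ does not spoil the jump-dominated density estimate for $\abs{D(u-v)}$. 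Once the contradiction is reached at $\H^{m-1}$-almost every good point, it follows that $(J_u\setminus J_f)\isect\Gamma$ is $\H^{m-1}$-null for every covering graph $\Gamma$, whence $\H^{m-1}(J_u\setminus J_f)=0$.
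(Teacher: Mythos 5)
Your outline follows the paper's strategy (contradiction at a good point $x_0\in(\Gamma\isect J_u)\setminus(J_f\union Z_u)$, the Part~1 shift transformations, the partial double-Lipschitz estimate against the fidelity decrease), but two steps are genuinely broken as written. First, you test optimality of $u$ directly against the raw partial pushforwards $\partpf{\gammaone}{u}{v}$, $\partpf{\gammatwo}{u}{v}$ and then invoke the Part~1 fidelity lemma for the left-hand side. But Lemma \ref{lemma:phi-approx} does not give a fidelity decrease for pure pushforwards: it produces a $\theta\in(0,1)$ (determined by $\phi$, $u^\pm(x_0)$, $\tilde f(x_0)$) and asserts the decrease $\le -C\rho r^m$ only for the convex combinations $\theta u+(1-\theta)\pf\gamma u$. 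The paper therefore takes as competitors $\newushift=\theta u+(1-\theta)\partpf{\gamma}{u}{v}$, applies the fidelity lemma to $q=u-v$ with data $g=f-v$ (legitimate since $x_0\notin S_v$, so $J_g=J_f$ modulo an $\H^{m-1}$-null set and the jump of $u-v$ at $x_0$ equals that of $u$), and transfers the regulariser estimate to these competitors by \emph{convexity} of $R$, which yields $R(\newushift[\rho])+R(\newushift[-\rho])-2R(u)\le(1-\theta)\bigl(R(\partpf{\gammarhoh}{u}{v})+R(\partpf{\gammarhohNEG}{u}{v})-2R(u)\bigr)$. You also conflate this $\theta$ with the shift fraction; in the paper they are independent parameters.

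Second, and more seriously, your final balancing has the quantifiers in the wrong order. With your parametrisation (shift amount $\theta r$, i.e.\ shift parameter $\theta$ fixed independently of $r$), the combined estimate reads, up to constants, $C'\theta^2 r^{m-1}+C''\theta\epsilon r^m-C\theta r^m+\epsilon r^{m+1}$, and your prescription ``choose $\theta$ small, then $r$ small'' fails: once $\theta$ is fixed, letting $r\downto 0$ makes the \emph{positive} regularity term $C'\theta^2 r^{m-1}$ dominate the fidelity gain $C\theta r^m$, so no contradiction arises. (This is forced on you: the radii $r_1,r_2,r_3$ below which the double-Lipschitz estimate, the fidelity lemma, and the jump-density bound hold all shrink with $\epsilon$, so you cannot fix $r$ first.) The paper's resolution is to couple the shift parameter to $r$, setting $\rho=\bar\rho r^m$ so that every term in the estimate carries the common power $r^{m+1}$, namely $(C'\bar\rho^2+C''\bar\rho\epsilon-C\bar\rho+\epsilon\kappa^{m+1})r^{m+1}$; one then fixes $\bar\rho$ with $C'\bar\rho<C/4$, next $\epsilon$ with $C''\epsilon<C/4$ and $\epsilon\kappa^{m+1}<\bar\rho C/4$, and only then lets $r$ be small, obtaining $\le -C\bar\rho r^{m+1}/4<0$ uniformly in admissible $r$. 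You correctly identify this balancing as ``the main obstacle,'' but the proof stands or falls exactly there, and the ordering you propose would not close the argument; note also that the bound $\bitransfull{\gammaone}{\gammatwo}<1$ in Definition \ref{def:lipschitz-trans} is compatible with this scheme precisely because $\rho$ vanishes with $r$ while $\epsilon$ is chosen after $\bar\rho$, never influencing the admissible $\delta$-threshold, as the remark following that definition stresses.
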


\begin{remark}
    Observe that we require $u$ to be locally bounded. This does not 
    necessarily  hold, and needs to be proved separately. In imaging 
    applications we are however not usually interested in unbounded data ,
    and nearly always $\norm{f}_{L^\infty(\Omega)} \le M$
    for some known dynamic range $M$.  So one would think that it
    suffices to add the constraint $\norm{u}_{L^\infty(\Omega)} \le M$
    to the problem \eqref{eq:prob}. This would even work under the
    simpler double-Lipschitz comparability \eqref{eq:double-lip}
    of Part 1, as the constraint is invariant under pushforwards
    $\pf\gamma u$. 
    
    It is, however, not generally invariant under the partial pushforward
    $\partpf{\gamma}{u}{v}$, which might not satisfy the constraint
    if $\abs{\tilde u(x_0)}=M$. If $\abs{\tilde u(x_0)}<M$, and the
    radius $r_0>0$ is small enough, the constraint will still be satisfied
    for otherwise well-behaved $u$ and typical constructions of $v$.
    What this says is that if (well-behaved) $u$ jumps outside $J_f$,
    then it will jump to activate the constraint. Whether in practise 
    the $v$ prescribed by the partial double-Lipschitz property of any 
    particular regulariser satisfies
    $\norm{\partpf{\gamma}{u}{v}}_{L^\infty(\Omega)} \le M$,
    is as interesting an open question as boundedness itself.
\end{remark}

The proof of Theorem \ref{theorem:jumpset-strict} is based on combining
the double-Lipschitz estimate for the regulariser with a separate
estimate for the fidelity, for specific ``shift'' transformations
$\gammax{\rho}{h}{r}$. In Part 1, we proved the following lemmas
about these.

\newcommand{\newu}[1][\rho]{\bar u_{#1,r}}
\newcommand{\newushift}[1][\rho]{\hat u_{#1,r}}
\newcommand{\newux}[2][\rho]{\bar {#2}_{#1,r}}
\newcommand{\gammarhoh}[1][\rho]{\gammax{#1}{h}{r}}
\newcommand{\gammarhohNEG}{\gammarhoh[-\rho]}

\begin{lemma}[Part 1]
    \label{lemma:f-transform-estim}
    Suppose $\gamma \in \LipClass(\Omega, U, z)$ for some
    $z \in \R^m$ and $U \subset \R^m$. Let $u \in \BVspace(\Omega)$. Then
    \[
        \int_U \abs{u(\gamma(x))-u(x)} \d x 
        \le M_{\gamma} \abs{Du}(U),
    \]
    where
    \[
        M_{\gamma} \defeq \sup_{x \in \Omega} \norm{\gamma(x)-x}.
    \]
\end{lemma}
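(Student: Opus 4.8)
The plan is to exploit the one-dimensional structure forced by the constraint $\gamma \in \LipClass(\Omega, U, z)$. Since $P_z^\perp \gamma(y)=P_z^\perp y$, the transformation displaces points only along $z$, so I may write $\gamma(x)=x+s(x)z$ with $s(x)\defeq\iprod{z}{\gamma(x)-x}$, where $\abs{s(x)}\le M_\gamma$ and $s\equiv 0$ outside $U$. For $y\in z^\perp$ set $\ell_y\defeq\{y+tz\mid t\in\R\}$ and $U_y\defeq\{t\mid y+tz\in U\}$. Because $\gamma$ fixes $\Omega\setminus U$ and is a bijection of $\Omega$, it maps $U$ onto $U$; restricted to $\ell_y$ it induces $\gamma_z^y(t)\defeq t+s(y+tz)$, a continuous injection which, extended by the identity, is a strictly increasing bijection of $\R$ equal to the identity off $U_y$. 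Consequently $\gamma_z^y$ carries each connected component $(a,b)$ of $U_y$ onto itself, so the whole interval swept between $t$ and $\gamma_z^y(t)$ stays inside that single component.

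First I would reduce to a one-dimensional estimate by slicing. Writing $g_y(t)\defeq u(y+tz)$, the BV slicing theory \cite{ambrosio2000fbv} gives that $g_y\in\BVspace(\R)$ for $\H^{m-1}$-a.e.\ $y\in z^\perp$ and that $\int_{z^\perp}\abs{Dg_y}(U_y)\d\H^{m-1}(y)=\abs{\iprod{z}{Du}}(U)\le\abs{Du}(U)$, where $\iprod{z}{Du}$ is the $z$-component of the measure $Du$. By Fubini along $z$ and the fact that $u\circ\gamma-u$ is supported in $U$, the left-hand side of the claim equals $\int_{z^\perp}\int_{U_y}\abs{g_y(\gamma_z^y(t))-g_y(t)}\d t\,\d\H^{m-1}(y)$. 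It therefore suffices to prove, for a fixed $g\in\BVspace(\R)$ and an increasing bijection $\gamma_z$ of the above type, that $\int_{U_y}\abs{g(\gamma_z(t))-g(t)}\d t\le M_\gamma\abs{Dg}(U_y)$.

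For the one-dimensional estimate I would use the precise representative of $g$, for which $\abs{g(b)-g(a)}\le\abs{Dg}((a,b))$. Denoting by $I(t)$ the open interval with endpoints $t$ and $\gamma_z(t)$ — contained in a single component of $U_y$ by the remark above — this yields the pointwise bound $\abs{g(\gamma_z(t))-g(t)}\le\abs{Dg}(I(t))$. Integrating and applying Tonelli's theorem gives $\int_{U_y}\abs{g(\gamma_z(t))-g(t)}\d t\le\int_{U_y}\mu(\tau)\d\abs{Dg}(\tau)$, where $\mu(\tau)\defeq\L^1(\{t\mid\tau\in I(t)\})$ is the occupation measure. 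The crux is the bound $\mu(\tau)\le M_\gamma$: since $\gamma_z$ is increasing, the set $\{t\mid\tau\in I(t)\}$ is exactly the interval between $\tau$ and $\gamma_z^{-1}(\tau)$, whose length is $\abs{\gamma_z^{-1}(\tau)-\tau}=\abs{s(y+\gamma_z^{-1}(\tau)z)}\le M_\gamma$. Combining and integrating over $z^\perp$ then closes the argument.

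The main obstacle I anticipate is precisely the occupation-measure bound $\mu(\tau)\le M_\gamma$: a naive displacement estimate only places $t$ within a symmetric $M_\gamma$-neighbourhood of $\tau$ and hence gives the factor $2M_\gamma$. Obtaining the sharp constant requires genuinely using the \emph{monotonicity} of the induced one-dimensional map, so that $\{t\mid\tau\in I(t)\}$ is a single interval whose length equals the displacement at its endpoint rather than a two-sided neighbourhood. A secondary technical point is justifying the slicing/Fubini identity and the confinement of each swept interval to one component of $U_y$ for a BV (rather than smooth) integrand; I would handle this through the standard one-dimensional BV slicing results and the precise representative, rather than by mollification, so as to avoid boundary losses in passing $\int_U\abs{\grad u_n}\to\abs{Du}(U)$.
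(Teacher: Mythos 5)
Your proposal is correct and takes essentially the same route as the paper's proof, which simply observes that the argument of Part 1 --- slicing $u$ along lines parallel to $z$, using that $\gamma$ induces monotone one-dimensional maps fixing each component of $U_y$, and bounding the occupation measure of the swept intervals by the displacement, which is exactly what yields the sharp constant $M_\gamma$ instead of $2M_\gamma$ --- depends only on $\gamma \in \LipClass(\Omega, U, z)$, a remark your write-up substantiates in full. The one microscopic imprecision, that for the precise representative one has $\abs{g(b)-g(a)} \le \abs{Dg}([a,b])$ rather than $\abs{Dg}((a,b))$, is harmless: replacing $I(t)$ by its closure changes the set $\{t \mid \tau \in I(t)\}$ by at most two points for each $\tau$ (by injectivity of $\gamma_z^y$), so the occupation bound $\mu(\tau) \le M_\gamma$ and the final estimate are unaffected.
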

\begin{proof}
    This is proved in Part 1 for specific transformations, but
    everything in the proof only depends on 
    $\gamma \in \LipClass(\Omega, U, z)$.
\end{proof}

\begin{lemma}[Part 1]
    \label{lemma:gamma-rho-h-r}
    Let $\Omega \subset \R^m$, and $\Gamma \subset \Omega$ 
    be a Lipschitz $(m-1)$-graph, $x_0 \in \Gamma$.
    There exist $r_0>0$ and Lipschitz transformations
    $\gammax{\rho}{h}{r} \in \LipClass(\Omega, U, z_\Gamma)$,
    ($-1 < \rho < 1$, $0 < r < r_0$), with
    \[
        U_r \defeq x_0 + z_\Gamma^\perp \isect \B(0, r) + (3 + \lip f_\Gamma) (-r, r) z_\Gamma.
    \]
    Moreover, there exists a constant $C>0$ such that 
    \[
        \bitransfull{\gammax{\rho}{h}{r}}{\gammax{-\rho}{h}{r}} \le C \rho^2.
    \]
\end{lemma}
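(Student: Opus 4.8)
The plan is to realise $\gammarhoh$ as a purely vertical shift along $z_\Gamma$, its amplitude modulated by a horizontal bump and a vertical cutoff. Because such a shift has a rank-one differential, both its $m$-dimensional Jacobian and its cofactor matrix $\lipjac{\gammarhoh}$ depend on $\rho$ \emph{exactly affinely}; the $O(\rho^2)$ bound then falls out of the cancellation of the first-order terms once we pair $\rho$ with $-\rho$.

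First I would fix a horizontal profile $\baseh\in\basehspace$ and a cutoff $\chi\in C_c^\infty(\R)$ with $\chi\equiv1$ near $0$, $\support\chi\subset(-2,2)$, and $\norm{\baseh}_\infty\norm{\chi'}_\infty\le1$. After extending $f_\Gamma$ to all of $z_\Gamma^\perp$ by Kirszbraun, I write each $x\in\Omega$ in coordinates adapted to $\Gamma$: $v=P_{z_\Gamma}^\perp x$ and the height over the graph $\tau=\iprod{x}{z_\Gamma}-f_\Gamma(v)$. With $v_0=P_{z_\Gamma}^\perp x_0$ I set the scalar shift $s_r(x)=r\,\baseh((v-v_0)/r)\,\chi(\tau/r)$ and define $\gammarhoh(x)=x+\rho\,s_r(x)\,z_\Gamma$. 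Since $P_{z_\Gamma}^\perp z_\Gamma=0$, the orthogonal component of $x$ is preserved. The chain and product rules make the explicit factors $r$ and $1/r$ cancel, so $\grad s_r$ — and in particular $\iprod{z_\Gamma}{\grad s_r}=\baseh(\cdots)\chi'(\tau/r)$ — is bounded \emph{uniformly in $r$} by a constant depending only on $\baseh$, $\chi$, and $\lip f_\Gamma$. Choosing $r_0$ so small that $\closure{U_{r_0}}\subset\Omega$, the vertical room built into the factor $3+\lip f_\Gamma$ guarantees both $\support(\gammarhoh-\iota)\subset U_r$ and $\gammarhoh(U_r)\subset U_r$; the slope bound $\abs{\rho\iprod{z_\Gamma}{\grad s_r}}<1$ (which holds for $\abs\rho<1$) makes $t\mapsto t+\rho s_r$ strictly increasing on each vertical line, so $\gammarhoh$ is a bi-Lipschitz self-map of $\Omega$, and Lipschitz maps have the Lusin $N$-property. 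Hence $\gammarhoh\in\LipClass(\Omega,U_r,z_\Gamma)$.

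Next I would compute. Since $\gammarhoh=\iota+\rho s_r z_\Gamma$, its differential is the rank-one update $\grad\gammarhoh=I+\rho\,z_\Gamma\otimes\grad s_r$, so the determinant and Sherman--Morrison formulas give, \emph{exactly}, $\jacobianf{m}{\gammarhoh}{x}=1+\rho\iprod{z_\Gamma}{\grad s_r}$ and (using the inverse function theorem) $\lipjac{\gammarhoh}(x)=\jacobianf{m}{\gammarhoh}{x}(\grad\gammarhoh(x))^{-1}=(1+\rho\iprod{z_\Gamma}{\grad s_r})I-\rho\,z_\Gamma\otimes\grad s_r$. Both are affine in $\rho$, whence $\jacobianf{m}{\gammarhoh}{x}+\jacobianf{m}{\gammarhohNEG}{x}=2$ and $\lipjac{\gammarhoh}(x)+\lipjac{\gammarhohNEG}(x)=2I$: the first-order terms cancel, giving $\bitransjac{\gammarhoh}{\gammarhohNEG}=0$. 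Writing $\lipjac{\gammarhoh}=I+\rho B$ with $B=\iprod{z_\Gamma}{\grad s_r}I-z_\Gamma\otimes\grad s_r$ (bounded uniformly in $r$), for a unit vector $v$ squaring shows $\norm{(I+\rho B)v}+\norm{(I-\rho B)v}\le2+\rho^2\norm{Bv}^2$, so $\bitrans{\gammarhoh}{\gammarhohNEG}\le\rho^2\sup_x\norm{B(x)}^2\le C\rho^2$. For the remaining, squared terms it suffices that each base quantity be $O(\abs\rho)$: $\didtransjac{\gammarhoh}=\abs{\rho\iprod{z_\Gamma}{\grad s_r}}\le C\abs\rho$; $\didtrans{\gammarhoh}=\norm{(\grad\gammarhoh)^{-1}-I}\le C\abs\rho$ again by Sherman--Morrison; and, applying Lemma \ref{lemma:f-transform-estim} to $\inv{\gammarhoh}$, $\int_\Omega\norm{\pf{\gammarhoh}u(y)-u(y)}\d y\le M_{\inv{\gammarhoh}}\abs{Du}(U_r)\le C\abs\rho\,r\,\abs{Du}(U_r)$, while $\diam(U_r)\ge c\,r$, so that $\ntdis{\gammarhoh}\le C\abs\rho$. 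Summing all contributions yields $\bitransfull{\gammarhoh}{\gammarhohNEG}\le C\rho^2$.

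The hard part is not the $O(\rho^2)$ estimate: the rank-one algebra makes the $\rho$-dependence of the Jacobian data \emph{exactly} affine, so there is no hidden higher-order remainder, and the only genuinely analytic inputs are the elementary two-square-root inequality and the transformation-distance bound of Lemma \ref{lemma:f-transform-estim}. The delicate point is the geometric bookkeeping of the construction: one must choose $\support\chi$, the size of $\norm{\baseh}_\infty$, and the vertical half-width $(3+\lip f_\Gamma)r$ of $U_r$ so that, simultaneously and for the whole range $\abs\rho<1$, (i) $\support(\gammarhoh-\iota)\subset U_r$, (ii) $\gammarhoh$ maps $U_r$ into itself, and (iii) $\grad s_r$ stays bounded independently of $r$, the last being what keeps the constant $C$ uniform. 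It is precisely to leave room for (i)--(ii) across a graph of slope up to $\lip f_\Gamma$ that the slab $U_r$ is taken roughly $3+\lip f_\Gamma$ times taller than it is wide.
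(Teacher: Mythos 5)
Your construction is correct and is essentially the argument the paper itself relies on: the paper's proof of this lemma simply cites Part 1 for the vertical-shift transformations (built, as in your proposal, from a base profile $\baseh \in \basehspace$ and a cutoff across the graph) and for the bounds on $\bitrans{\gammax{\rho}{h}{r}}{\gammax{-\rho}{h}{r}}$, $\bitransjac{\gammax{\rho}{h}{r}}{\gammax{-\rho}{h}{r}}$ and $\didtrans{\gammax{\rho}{h}{r}}$, and then supplies exactly the two supplementary estimates you prove last --- $\didtransjac{\gammax{\rho}{h}{r}} \le C\abs{\rho}$ read off from the explicit Jacobian, and $\ntdis{\gammax{\rho}{h}{r}} \le C\abs{\rho}$ via Lemma \ref{lemma:f-transform-estim} together with $M_{\gammax{\rho}{h}{r}} = \abs{\rho}r$ and the fact that any admissible $U$ contains the support of $\gammax{\rho}{h}{r}-\iota$ and hence has diameter at least $cr$. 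Your rank-one algebra (exact affinity of $\jacobianf{m}{\gammax{\rho}{h}{r}}{x}$ and $\lipjac{\gammax{\rho}{h}{r}}(x)$ in $\rho$, first-order cancellation under $\rho \leftrightarrow -\rho$, and the inequality $\norm{(I+\rho B)v}+\norm{(I-\rho B)v} \le 2+\rho^2\norm{Bv}^2$) is sound. One quantitative slip: with your normalisation $\norm{\baseh}_\infty\norm{\chi'}_\infty \le 1$ you only get $1+\rho\iprod{z_\Gamma}{\grad s_r} \ge 1-\abs{\rho}$, so the Sherman--Morrison step yields $\didtrans{\gammax{\rho}{h}{r}} \le C\abs{\rho}/(1-\abs{\rho})$, which is not $O(\abs{\rho})$ uniformly over the full stated range $-1<\rho<1$, and since $\didtrans{\gammax{\rho}{h}{r}}^2$ enters $\bitransfull{\gammax{\rho}{h}{r}}{\gammax{-\rho}{h}{r}}$, the single constant $C$ in the conclusion degenerates as $\abs{\rho} \to 1$; requiring instead $\norm{\baseh}_\infty\norm{\chi'}_\infty \le 1/2$ (any bound strictly below $1$ works) keeps the Jacobian bounded away from zero, restores a uniform constant, and costs nothing elsewhere in your argument.
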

\begin{proof}
    Only the facts $\didtransjac{\gammax{\rho}{h}{r}} \le C\abs{\rho}$
    and $\ntdis{\gammax{\rho}{h}{r}} \le C\abs{\rho}$, which
    are required for the bound $\bitransfull{\gammax{\rho}{h}{r}}{\gammax{-\rho}{h}{r}}$,
    are not directly proved in Part 1.
    The former follows immediately from the expression calculated for
    $\jacobianF{m}{\gammax{\rho}{h}{r}}$ in Part 1.
    Regarding $\ntdis{\gammax{\rho}{h}{r}}$, it follows from
    Lemma \ref{lemma:f-transform-estim} that
    \[
        \ntdis{\gamma} \le \sup_{U': \gamma \in \LipClass(\Omega, U', z)} \frac{M_\gamma}{\diam(U')},
        \quad (\gamma \in \LipClass(\Omega, U, z)).
    \]
    This is why we call $\ntdis{\gamma}$ the \emph{normalised} transformation distance.
    In Part 1, we proved that $M_{\gammax{\rho}{h}{r}} = \abs{\rho} r$.
    Therefore, there exists a constant $C>0$ satisfying
    \[
        \ntdis{\gammax{\rho}{h}{r}} \le \frac{\abs{\rho}r}{\diam(U_r)} \le C\abs{\rho}.
        \qedhere
    \]
\end{proof}

\begin{lemma}[Part 1]
    \label{lemma:phi-approx}
    Suppose $\phi$ is admissible and $p$-increasing with $1 < p < \infty$, and
    \emph{both} $u, f \in \BVspace(\Omega) \isect L^\infty_\loc(\Omega)$.
    Let $x_0 \in J_u \setminus (S_f \union Z_u)$.
    Then there exist $\theta \in (0, 1)$, $r_0>0$, independent of $\rho$,
    and a constant $C = C(\phi, u^\pm(x_0), \tilde f(x_0)) > 0$,
    such that whenever $0<r<r_0$ and $0 < \rho < 1$, the functions
    \begin{equation}
        \label{eq:p1-newu}
        \newu(x) = \theta u(x) + (1-\theta) \pf\gammarhoh u(x),
    \end{equation}
    satisfy
    \begin{equation}
        \label{eq:phi-approx}
        \int_{\Omega} \phi(\newu[\rho](x) -f(x)) \d x 
        +
        \int_{\Omega} \phi(\newu[-\rho](x) -f(x)) \d x 
        - 2\int_{\Omega} \phi(u(x) -f(x)) \d x
        \le - C \rho r^m.
    \end{equation}
\end{lemma}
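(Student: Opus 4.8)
The plan is to reduce the global fidelity difference on the left-hand side of \eqref{eq:phi-approx} to a single pointwise, $\rho$-independent convexity comparison between the two trace values of $u$ across the jump, and then to extract the claimed decrease from the strict convexity of $\phi$. The guiding observation is that the two opposite shifts $\gammarhoh$ and $\gammarhohNEG$ displace the jump surface to opposite sides, so their \emph{linear}-in-$\rho$ effects cancel; what survives is a genuinely negative second-order (in the values, first-order in $\rho$ via the slab volume) convexity term.

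First I would localise. Since $x_0 \in J_u \setminus (S_f \union Z_u)$, the one-sided traces $u^\pm(x_0)$ exist and are distinct, $f$ has a Lebesgue value $\tilde f(x_0)$, and Lemma \ref{lemma:z-u} supplies a Lipschitz $(m-1)$-graph $\Gamma \defeq \Gamma^{x_0}$ (the one used to build $\gammarhoh$ in Lemma \ref{lemma:gamma-rho-h-r}) with $\Theta^*_{m-1}(\abs{Du}\restrict \Gamma^+; x_0)=\Theta^*_{m-1}(\abs{Du}\restrict \Gamma^-; x_0)=0$. These statements, together with local boundedness $u, f \in L^\infty_\loc(\Omega)$ (which keeps every argument of $\phi$ in a fixed compact interval where $\phi$ is Lipschitz), let me replace $u$ by $u^+(x_0)$ on $\Gamma^+ \isect U_r$, by $u^-(x_0)$ on $\Gamma^- \isect U_r$, and $f$ by $\tilde f(x_0)$ throughout $U_r$, at an $L^1$-cost I would control by the vanishing densities and by Lemma \ref{lemma:f-transform-estim}.

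Next I would identify the active region. As $\gammarhoh \in \LipClass(\Omega, U_r, z_\Gamma)$ is a pure normal shift of maximal size $M_{\gammarhoh} = \abs{\rho} r$, the pushforward $\pf\gammarhoh u$ agrees with $u$ except on a slab adjacent to $\Gamma$ of measure $\asymp \rho r^m$, on which the jump level is swapped; the opposite shift acts on the opposite slab. Hence $\newu$ differs from $u$ only on a slab $S_+$ where it equals $\theta u^+(x_0) + (1-\theta) u^-(x_0)$, while $\newu[-\rho]$ differs from $u$ only on the opposite slab $S_-$ where it equals $\theta u^-(x_0) + (1-\theta) u^+(x_0)$. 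Writing $a \defeq u^+(x_0) - \tilde f(x_0)$, $b \defeq u^-(x_0) - \tilde f(x_0)$ (so $a \ne b$ as $x_0 \in J_u$), and $V \asymp \rho r^m$ for the common slab measure, the localisation step reduces the left-hand side of \eqref{eq:phi-approx} to
\[
    V\bigl[\phi(\theta a + (1-\theta) b) + \phi(\theta b + (1-\theta) a) - \phi(a) - \phi(b)\bigr] + o(\rho r^m).
\]

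It remains to make the bracket strictly negative. For any fixed $\theta \in (0,1)$ the two interior points are reflections of each other about $(a+b)/2$ and lie strictly inside $[\min(a,b),\max(a,b)]$, so $\{a,b\}$ majorises them and convexity already gives bracket $\le 0$. For the strict, quantitative bound I would invoke that $\phi$ is $p$-increasing with $p>1$ (Definition \ref{def:phi-increase}), which yields the strict convexity of $\phi$ needed here (for $\phi(t)=\abs{t}^p$ this is precisely the strict convexity of the $p$-th power); taking $\theta = \tfrac12$ gives bracket $= \phi(a)+\phi(b)-2\phi(\tfrac{a+b}{2}) =: -2\kappa$ with $\kappa = \kappa(\phi,a,b) > 0$. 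Choosing $r_0$ small enough that the $o(\rho r^m)$ remainder is at most $\kappa V$, uniformly in $0<\rho<1$, then produces \eqref{eq:phi-approx} with $C = C(\phi, u^\pm(x_0), \tilde f(x_0)) > 0$. The main obstacle I anticipate is the \emph{uniform-in-$\rho$} error control of the first step: one must show the trace/Lebesgue replacement costs only $o(\rho r^m)$ rather than merely $o(r^m)$, delicate precisely because the slab measure is itself $\asymp \rho r^m$, and this is exactly where the vanishing one-sided densities from Lemma \ref{lemma:z-u} and the estimate of Lemma \ref{lemma:f-transform-estim} must be combined with care.
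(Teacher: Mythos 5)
You should note first that this paper does not actually prove Lemma \ref{lemma:phi-approx}: it is imported verbatim from Part 1 \cite{tuomov-jumpset}, so your proposal has to be measured against that argument. Your overall reduction is the right one and matches it in spirit: localise to the two slabs of common measure $V \asymp \rho r^m$ swept by the opposite shifts, replace $u$ by its one-sided traces and $f$ by $\tilde f(x_0)$, and decide everything at the level of the two values $a = u^+(x_0)-\tilde f(x_0)$, $b = u^-(x_0)-\tilde f(x_0)$. But your key strictness step is fragile. You derive the strict decrease from ``strict convexity of $\phi$, which follows from $p$-increasing with $p>1$''. Under the literal reading of Definition \ref{def:phi-increase} (the inequality quantified over \emph{all} $x,y \ge 0$), applying it to the pair $(x,y)$ and to $(y,x)$ squeezes the difference quotient, $C_\phi y^{p-1} \le (\phi(x)-\phi(y))/(x-y) \le C_\phi x^{p-1}$ for $0\le y<x$, which forces $\phi(t)=(C_\phi/p)t^p$; so strict convexity holds, but only through this rigidity, and you never verify it. If the condition is used merely as the one-sided increment bound (the way it is actually deployed in fidelity estimates), a convex $\phi$ that is affine on an interval containing $[\min(a,b),\max(a,b)]$ passes, and then your symmetric $\theta=\tfrac12$ bracket is \emph{exactly zero}: the linear effects of the two shifts cancel and no decrease of order $\rho r^m$ survives. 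The robust argument — and the way Part 1 secures strictness — is asymmetric: with $c=\theta a+(1-\theta)b$ and $d=\theta b+(1-\theta)a$, both directions of the $p$-increasing inequality bound the net change by $C_\phi(1-\theta)(a-b)\bigl(\abs{d}^{p-1}-\abs{c}^{p-1}\bigr)$, which is made strictly negative by choosing $\theta$ on the appropriate side of $\tfrac12$ according to the position of $\tilde f(x_0)$ relative to $u^\pm(x_0)$ (with the even extension $\phi(t)=\phi(-t)$ handling the symmetric case $a+b=0$). This is why the lemma asserts ``there exists $\theta$'' rather than working for all $\theta$; your proof fixes $\theta=\tfrac12$ and leans on a property you have not established.

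The second issue is the uniform-in-$\rho$ error control, which you correctly flag as the hard point but do not execute — and your proposed toolkit is incomplete for it. For the $u$-replacement your instinct is right: slicing along $z_\Gamma$ bounds the slab cost by the thickness $\rho r$ times $\bigl(\abs{Du}\restrict(\Gamma^{x_0})^{\pm}\bigr)(\B(x_0,Cr))$ plus the $\H^{m-1}$-average trace error, and Lemma \ref{lemma:z-u} makes both factors $o(r^{m-1})$, yielding $\rho\, o(r^m)$ uniformly in $\rho$. But for the $f$-replacement, approximate continuity at $x_0 \not\in S_f$ gives only $\int_{\B(x_0,Cr)}\abs{f-\tilde f(x_0)}\d y = o(r^m)$, which is useless against a slab of measure $\rho r^m$ once $\rho$ is small — and Theorem \ref{theorem:jumpset-strict} applies the lemma precisely with $\rho = \bar\rho r^m$ tiny. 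One needs the same slicing/vanishing-density mechanism for $f$, e.g.\ $\abs{Df}(\B(x_0,Cr)) = o(r^{m-1})$, which holds at $\H^{m-1}$-a.e.\ $x_0 \notin J_f$ by Proposition \ref{prop:bv-density} and is exactly where the hypothesis $f \in \BVspace(\Omega)$ (not just $f \in L^\infty_\loc$) earns its keep. Your sketch funnels this through ``Lemma \ref{lemma:f-transform-estim} combined with care'' without registering that the $f$-term needs a mechanism different from the Lebesgue-point property; together with the unverified strict-convexity claim, these are the two genuine gaps in an otherwise correctly structured plan.
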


With these, we may without much difficulty prove Theorem \ref{theorem:jumpset-strict}.

\begin{proof}[Proof of Theorem \ref{theorem:jumpset-strict}]
    Since $J_u$ is rectifiable, there exists a family $\{\Gamma_i\}_{i=1}^\infty$
    of Lipschitz graphs with $\H^{m-1}(J_u \setminus \Union_{i=1}^\infty \Gamma_i)=0$.
    If the conclusion of the theorem does not hold, that is
    $\H^{m-1}(J_u \setminus J_f)>0$, then for some $i \in \Z^+$
    and $\Gamma \defeq \Gamma_i$, also
    $\H^{m-1}((\Gamma \isect J_u) \setminus J_f)>0$.
    We will show that this leads to a contradiction.
    Since $R$ satisfies Assumption \ref{ass:comp}\ref{item:ass:comp-gamma},
    it is partially double-Lipschitz comparable in the direction $z_\Gamma$ for $u$
    at almost every $x_0 \in (\Gamma \isect J_u) \setminus J_f$. In particular,
    since $\H^{m-1}(Z_u)=0$, we may choose a point
    $x_0 \in (\Gamma \isect J_u) \setminus (J_f \union Z_u)$,
    where $R$ is also partially double-Lipschitz comparable in the 
    direction $z_\Gamma$ for $u$. We let $v$ be the function given by
    Definition \ref{def:lipschitz-trans}, and pick arbitrary $\epsilon>0$,
    $\theta \in (0, 1)$.
    Then for some $r_1>0$, every $U \subset \B(x_0, r)$, $0<r<r_1$ and
    $\gammaone,\gammatwo \in \LipClass(\Omega, U, z_\Gamma)$, 
    the estimate holds
    \begin{equation}
        \label{eq:mainthm-r-estim-1}
        R(\partpf{\gammaone}{u}{v}) + R(\partpf{\gammatwo}{u}{v}) - 2R(u)
        \le
        \RCa \bitransfull{\gammaone}{\gammatwo} \abs{D(u-v)}(\closure U)
        + (\bitransfull{\gammaone}{\gammatwo}^{1/2} +r) \epsilon r^m/(1-\theta).
    \end{equation}
    
    The overall idea in adapting the proof of the corresponding
    Theorem in Part 1 now is to apply Lemma \ref{lemma:phi-approx} 
    on the function $q \defeq u-v$ with data 
    $g \defeq f-v$ for $v$. Indeed
    \[
        \newushift \defeq \theta u + (1-\theta)\partpf{\gamma}{u}{v}
        =\big(\theta(u-v)+(1-\theta)\pf\gamma(u-v)\bigr)+v
        =\newux{q}+v,
    \]
    where $\newux{q}$ is defined by \eqref{eq:p1-newu}.
    It is important here that $v \in W^{1,1}(\Omega)$
    and $x_0 \not \in S_v$, so $J_g=J_f$ modulo a $\H^{m-1}$-null set
    and $(u-v)^+(x_0) - (u-v)^-(x_0)=u^+(x_0)-u^-(x_0)$.
    Thus by Lemma \ref{lemma:phi-approx}
    there exist $\theta \in (0, 1)$, $r_2>0$, and a constant 
    $C = C(\phi, u^\pm(x_0), \tilde f(x_0), v) > 0$,
    such that whenever $0<r<r_2$ and $0 < \rho < 1$, then
    \begin{equation}
        \notag
        \begin{split}
        \int_{\Omega} \phi(\newushift[\rho](x) -f(x)) \d x 
        +
        &
        \int_{\Omega} \phi(\newushift[-\rho](x) -f(x)) \d x 
        \\ &
        - 2\int_{\Omega} \phi(u(x) -f(x)) \d x
        \le - C \rho r^m.
        \end{split}
    \end{equation}
    By convexity, obviously
    \[
        R(\newushift[\rho]) + R(\newushift[-\rho]) - 2R(u)
        \le (1-\theta) 
        \left(
            R(\partpf{{\gammax{\rho}{h}{r}}}{u}{v})
            +
            R(\partpf{{\gammax{-\rho}{h}{r}}}{u}{v}) 
            -2R(u)
        \right),
    \]
    Since the transformations
    $\partpf{{\gammax{\rho}{h}{r}}}{u}{v} \in \LipClass(\Omega, U, z_\Gamma)$,
    and $U_r \subset \B(x, \kappa r)$ for some $\kappa > 0$,
    it follows from \eqref{eq:mainthm-r-estim-1}, for $0<r<r_1/\kappa$ that
    \begin{equation}
        \label{eq:mainthm-r-estim}
        R(\newushift[\rho]) + R(\newushift[-\rho]) - 2R(u)
        \le
        (1-\theta) \RCa \bitransfull{\gammax{\rho}{h}{r}}{\gammax{-\rho}{h}{r}} \abs{D(u-v)}(\closure U_r)
        + (\bitransfull{\gammax{\rho}{h}{r}}{\gammax{-\rho}{h}{r}}^{1/2} + \kappa r) \epsilon (\kappa r)^m.
    \end{equation}
    Since $x_0 \in J_u \setminus Z_u$, we have
    \[
        \abs{D(u-v)}(\closure U_r) \le 2 \abs{u^+(x_0)-u^-(x_0)} \omega_{m-1} (\kappa r)^{m-1}
    \]
    for $0 < r < r_3$ and some $r_3>0$. Lemma \ref{lemma:gamma-rho-h-r}
    gives
    \[
        \bitransfull{\gammax{-\rho}{h}{r}}{\gammax{-\rho}{h}{r}}
        \le C''' \rho^2
    \]
    for some constant $C''' >0$. Setting
    \[
        G(u) \defeq \int_{\Omega} \phi(u(x) -f(x)) \d x + R(u)
    \]
    and summing \eqref{eq:phi-approx} with \eqref{eq:mainthm-r-estim},
    we observe for some constants $C', C''>0$ 
    and every $0< r < \min\{r_1/\kappa, r_2, r_3\}$ and $0<\rho<1$ that
    \[
        G(\newushift[\rho](x))+G(\newushift[-\rho](x))-2G(u)
        \le
        C'\rho^2 r^{m-1} + C'' \rho \epsilon r^m - C \rho r^m + \epsilon (\kappa r)^{m+1}. 
    \]
    To see how to make the right hand side negative, let us set
    $\rho=\bar \rho r^m$. 
    Then we get
    \[
        G(\newushift[\rho](x))+G(\newushift[-\rho](x))-2G(u)
        \le
        (C'\bar\rho^2 + C'' \bar\rho \epsilon - C \bar \rho + \epsilon \kappa^{m+1}) r^{m+1}. 
    \]
    We first pick $\bar \rho$ small enough that
    $C'\bar\rho < C/4$. Then we pick $\epsilon>0$ small enough
    that $C''\epsilon<C/4$ and $\epsilon \kappa^{m+1} <\bar\rho C/4$. This
    will force $r>0$ small, but will give
    \[
        G(\newushift[\rho](x))+G(\newushift[-\rho](x))-2G(u)
        \le
        - C \bar \rho r^{m+1}/4,
    \]
    which is negative. This says that
    $\min\{G(\newushift[\rho](x)), G(\newushift[-\rho](x))\} < G(u)$.
    Thus we produce a contradiction to $u$ minimising $G$.
\end{proof}

\subsection{Approximation results}
\label{sec:approx}

In this section, we study study two aspects of approximation.
The first is how well we can approximate functions
of bounded deformation (or variation, for the matter) by
differentials $\grad v$ of functions $v \in W^{1,1}(\Omega)$.
These approximations form the basis of proving partial double-Lipschitz
comparability. The second aspect that we study is the
approximation of a function $u \in \BDspace(\Omega)$ in
terms of $\TGV$-strict convergence, or generally convergence
such that $u^i \to u$ weakly* in $\BVspace(\Omega)$ and
$\norm{Du^i-w}_{2,\Meas(\Omega; \R^m)} \to \norm{Du^i-w}_{2,\Meas(\Omega; \R^m)}$
for $w \in L^1(\Omega)$.

\subsection{Local approximation in $\BDspace(\Omega)$}
\label{sec:bdapprox}

One of our most critical concepts is stated in the following definition.

\begin{definition}
    We say that $w \in \BDspace(\Omega)$ is \emph{BV-differentiable}
    at $x \in \Omega$ if there exists $\bvdiff{w}{x} \in \BVspace_\loc(\Omega; \R^m)$
    such that
    \[
        \lim_{r \downto 0}
            \dashint_{\B(x, r)} \frac{\norm{w(y)-\bvdiff{w}{x}(y)}}{r} \d y =0.
    \]
\end{definition}

\begin{remark}
    Clearly $w$ is BV-differentiable if actually $w \in \BVspace_\loc(\Omega; \R^m)$.
    On a related note, the $\BVspace_\loc$ assumption was also required 
    in \cite{ambrosio-crippa-maniglia} for the study of traces of 
    another function $u$ with respect to $\abs{D^s w}$.
\end{remark}

\begin{proposition}
    Every $u \in \BDspace(\Omega)$ is BV-differentiable at $\L^m$-\ae
    $x \in \Omega$.
\end{proposition}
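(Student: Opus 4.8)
The plan is to witness BV-differentiability at a.e.\ point by the \emph{affine} first-order approximation built from the approximate differential of $u$. The essential input is already recorded in the preliminaries on $\BDspace(\Omega)$: the absolutely continuous part of $Eu$ satisfies $\Eabs u(x) = \tfrac12(\grad u(x) + [\grad u(x)]^T)$ at points of approximate differentiability, and $\L^m$-\ae\ $x \in \Omega$ is such a point \cite{ambcosdal96,hajlasz1996approximate}. Unpacking this, at $\L^m$-\ae\ $x$ the approximate (Lebesgue) value $\tilde u(x)$ exists and there is a matrix $A_x \defeq \grad u(x) \in \R^{m \times m}$ for which
\[
    \lim_{r \downto 0} \dashint_{\B(x, r)} \frac{\norm{u(y) - \tilde u(x) - A_x(y-x)}}{r} \d y = 0,
\]
the limit being taken in the $L^1$-averaged (ball-mean) sense that matches the definition of BV-differentiability.

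First I would fix such a point $x$ and define the candidate $\bvdiff{u}{x}(y) \defeq \tilde u(x) + A_x(y-x)$ for $y \in \Omega$. Being affine, $\bvdiff{u}{x}$ is smooth with constant gradient $A_x$, so $D\bvdiff{u}{x} = A_x \L^m$ is a Radon measure on every bounded subset of $\Omega$; in particular $\bvdiff{u}{x} \in \BVspace_\loc(\Omega; \R^m)$, as required by the definition. Then the defining limit for BV-differentiability,
\[
    \lim_{r \downto 0} \dashint_{\B(x, r)} \frac{\norm{u(y) - \bvdiff{u}{x}(y)}}{r} \d y = 0,
\]
is literally the $L^1$-averaged approximate-differentiability limit above, so it holds. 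Since the set of admissible $x$ has full measure, this establishes BV-differentiability at $\L^m$-\ae\ $x$.

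The step that carries all the weight, and the only genuine obstacle, is the a.e.\ approximate differentiability of $\BDspace$ functions. This is far from automatic: bounded deformation controls only the symmetric part of the distributional gradient, so the existence a.e.\ of the \emph{full} approximate differential $A_x$ — and not merely its symmetric part $\Eabs u(x)$ — is the nontrivial content of the cited fine-properties results. A secondary subtlety is that one must use the $L^1$-averaged notion of approximate differentiability rather than the weaker pointwise approximate-limit notion; it is precisely this ball-mean form, guaranteed for $\BDspace$ by \cite{ambcosdal96,hajlasz1996approximate}, that coincides with the condition in the definition and makes the affine witness $\bvdiff{u}{x}$ legitimate.
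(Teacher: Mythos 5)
Your proof is correct and follows essentially the same route as the paper's: both invoke the $L^1$-averaged approximate differentiability of $\BDspace(\Omega)$ functions at $\L^m$-a.e.\ point from \cite{ambcosdal96,hajlasz1996approximate} and take the affine map $y \mapsto \tilde u(x) + \grad u(x)(y-x)$ as the witness $\bvdiff{u}{x} \in \BVspace_\loc(\Omega;\R^m)$. Your additional remarks (that affine maps are locally $\BVspace$, and that the ball-mean form of approximate differentiability is the one matching the definition) merely make explicit what the paper leaves implicit.
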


\begin{proof}
    We know from \cite{ambcosdal96,hajlasz1996approximate} that $w$ is 
    \emph{approximately differentiable} at $\L^m$-\ae $x \in \Omega$
    in the sense of existence of $L=\grad u(x) \in \R^{m \times m}$
    such that
    \[
        \lim_{r \downto 0}
            \dashint_{\B(x, r)} \frac{\norm{w(y)-w(x)-L(y-x)}}{r} \d y =0.
    \]
    It therefore suffices to set $\bvdiff{w}{x}(y) \defeq w(x)+L(y-x)$.
\end{proof}

\begin{remark}
    The domain of BV-differentiability is however potentially larger than
    approximate differentiability. A simple piece of evidence for this is 
    the fact that any $w \in \BVspace_\loc(\Omega; \R^m)$ is
    BV-differentiable everywhere, but not approximately differentiable
    on the jump set $J_w$.
    
    That we can show $\L^m$-\ae approximate differentiability is not
    entirely satisfying. We would prefer to have the property
    $\H^{m-1} \restrict J_w$-\ae Whether this can be achieved
    at least for $w$ a solution to \eqref{eq:tgv2-def},
    remains an interesting open question.
\end{remark}

We will need the following simple result for our main application
of BV-differentiability stated after it.

\begin{lemma}
    \label{lemma:bvdiff-jump}
    Suppose $w \in \BDspace(\Omega)$ is BV-differentiable at $x \in J_w$.
    Then $x \in J_{\bvdiff{w}{x}}$ with $w^+(x)=\bvdiff{w}{x}^+(x)$ and $w^-(x)=\bvdiff{w}{x}^-(x)$.
\end{lemma}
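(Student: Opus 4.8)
The plan is to show that $h \defeq \bvdiff{w}{x}$, which lies in $\BVspace_\loc(\Omega;\R^m)$ by hypothesis, inherits both one-sided traces of $w$ at $x$, using the \emph{same} approximate normal $\nu \defeq \nu_{J_w}(x)$ and the \emph{same} orientation on each side. Concretely, I would verify that $w^+(x)$ and $w^-(x)$ serve as the defining one-sided traces of $h$ at $x$ in the sense of the jump-set definition, i.e.\ that
\[
    \lim_{\rho \downto 0} \frac{1}{\rho^m} \int_{\B^\pm(x, \rho, \nu)} \norm{w^\pm(x) - h(y)} \d y = 0.
\]
Since $x \in J_w$ forces $w^+(x) \neq w^-(x)$, establishing this limit simultaneously yields $x \in J_h$ and the asserted equalities $h^\pm(x) = w^\pm(x)$.

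The key step is a triangle-inequality split of the integrand, carried out separately for each sign:
\[
    \frac{1}{\rho^m} \int_{\B^\pm(x, \rho, \nu)} \norm{w^\pm(x) - h(y)} \d y
    \le \frac{1}{\rho^m} \int_{\B^\pm(x, \rho, \nu)} \norm{w^\pm(x) - w(y)} \d y
    + \frac{1}{\rho^m} \int_{\B^\pm(x, \rho, \nu)} \norm{w(y) - h(y)} \d y.
\]
The first term on the right tends to $0$ as $\rho \downto 0$ by the very definition of the trace $w^\pm(x)$, available because $x \in J_w$. For the second term I would discard the half-space restriction and integrate over the whole ball, which is legitimate since the integrand is nonnegative:
\[
    \frac{1}{\rho^m} \int_{\B^\pm(x, \rho, \nu)} \norm{w(y) - h(y)} \d y
    \le \frac{1}{\rho^m} \int_{\B(x, \rho)} \norm{w(y) - h(y)} \d y
    = \omega_m \dashint_{\B(x, \rho)} \norm{w(y) - h(y)} \d y.
\]

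It remains to observe that this full-ball average vanishes, which is exactly where BV-differentiability enters: writing
\[
    \dashint_{\B(x, \rho)} \norm{w(y) - h(y)} \d y
    = \rho \cdot \dashint_{\B(x, \rho)} \frac{\norm{w(y) - h(y)}}{\rho} \d y,
\]
the bracketed average tends to $0$ by the definition of BV-differentiability at $x$, so the product tends to $0$ as well. Combining the two estimates gives the displayed trace limit for $h$, which concludes the argument. I do not expect a serious obstacle here: the only points requiring care are that passing from the half-ball to the full ball costs nothing beyond a fixed constant, and that the extra factor $1/\rho$ built into BV-differentiability is (more than) strong enough to send the full-ball average of $\norm{w-h}$ to zero. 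Keeping the sign of $\nu$ fixed across the two sides is what guarantees that the $+$ trace of $h$ is matched to the $+$ trace of $w$, rather than being inadvertently interchanged.
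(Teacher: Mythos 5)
Your proposal is correct and follows essentially the same route as the paper: the paper's proof likewise observes that BV-differentiability forces $\dashint_{\B(x,r)} \norm{w(y)-\bvdiff{w}{x}(y)} \d y \to 0$ (the extra factor $1/r$ only strengthens the convergence) and concludes that $w$ and $\bvdiff{w}{x}$ share the same one-sided limits at $x$. Your version merely spells out the triangle-inequality split and the half-ball-to-full-ball comparison that the paper leaves implicit, so there is no substantive difference.
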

\begin{proof}
    By the definition of BV-differentiability
    \[
        \lim_{r \downto 0}
            \dashint_{\B(x, r)} \norm{w(y)-\bvdiff{w}{x}(y)} \d y
        \le
        \lim_{r \downto 0}
            \dashint_{\B(x, r)} \frac{\norm{w(y)-\bvdiff{w}{x}(y)}}{r} \d y = 0.
    \]
    This implies that $w$ and $\bvdiff{w}{x}$ have the same one-sided
    limits at $x$.
\end{proof}

The next lemma provides one of the most important ingredients
of our approach to proving partial double-Lipschitz comparability
for higher-order regularisation functionals.

\begin{lemma}
    \label{lemma:bd-w-v-approx}
    Let $w \in \BDspace(\Omega)$, $x \in \Omega$, and
    $\Gamma \ni x$ be a $C^1$ $(m-1)$-graph.
    Suppose that $w$ has traces $w^\pm(x)$ from both sides of $\Gamma$ at $x$,
    and $P_{z_\Gamma}^\perp(w^+(x)-w^-(x))=0$.
    Then there exists $v \in W^{1,1}_\loc(\Omega)$ with $x \not\in S_v$,
    satisfying
    \begin{equation}
        \label{eq:bd-w-v-approx-1}
        \lim_{r \downto 0} \dashint_{\B(x, r)} \norm{w-\grad v} \d y =0.
    \end{equation}
    If $w$ is moreover BV-differentiable at $x$, then given $\epsilon>0$,
    there exists $r_\epsilon>0$ such that
    every $U \subset \B(x, r)$, $0 < r < r_\epsilon$,
    and $\gamma \in \LipClass(\Omega, U, z_\Gamma)$
    satisfy
    \begin{equation}
        \label{eq:bd-w-v-approx-2}
        \int_{U} \norm{\pf\gamma(w-\grad v) - (w - \grad v)} \d y
        \le (\ntdis{\gamma} + r)\epsilon r^m.
    \end{equation}
    
    If $x \in \Omega \setminus S_w$, then we may take
    $\gamma \in \LipClass(\Omega, U)$ (without any specification of $\Gamma$).
\end{lemma}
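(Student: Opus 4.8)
The plan is to realise the one-sided traces $w^\pm(x)$ as the gradient of an explicit piecewise-affine function. Writing $\lambda \defeq \iprod{w^+(x)-w^-(x)}{z_\Gamma}$, the hypothesis $P_{z_\Gamma}^\perp(w^+(x)-w^-(x))=0$ says precisely that $w^+(x)-w^-(x)=\lambda z_\Gamma$, i.e. the jump of $w$ is purely in the direction $z_\Gamma$. Using that $\Gamma$ is $C^1$ and tangent to $H\defeq x+z_\Gamma^\perp$ at $x$ (so that $z_\Gamma$ is its normal there), this is the compatibility condition that lets me define
\[
    v(y)\defeq\iprod{w^-(x)}{y-x}+\lambda\bigl(\iprod{y-x}{z_\Gamma}\bigr)^+ .
\]
Since $\iprod{w^+(x)-w^-(x)}{y-x}=\lambda\iprod{z_\Gamma}{y-x}=0$ on $H$, the two affine branches glue continuously, so $v$ is Lipschitz, hence $v\in W^{1,1}_\loc(\Omega)$ with $x\notin S_v$, and $\grad v=w^-(x)$ on $H^-$, $\grad v=w^+(x)$ on $H^+$. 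For \eqref{eq:bd-w-v-approx-1} I would split $\B(x,r)$ along $H$ and combine the trace definition of $w^\pm(x)$ with the $C^1$-tangency of $\Gamma$ to $H$: the wedge $(\Gamma^\pm\triangle H^\pm)\cap\B(x,r)$ has measure $o(r^m)$, so $\dashint_{\B(x,r)\cap H^\pm}\norm{w-w^\pm(x)}\d y\to0$ yields $\dashint_{\B(x,r)}\norm{w-\grad v}\d y\to0$.

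For the quantitative estimate \eqref{eq:bd-w-v-approx-2} I would decompose $q\defeq w-\grad v=(w-\bvdiff{w}{x})+(\bvdiff{w}{x}-\grad v)=:e+\tilde q$ and use linearity of the pushforward. The term $e$ is controlled by BV-differentiability: $\int_{\B(x,\kappa r)}\norm{e}\d y=o(r^{m+1})$, and since $\gamma$ is bi-Lipschitz with Jacobian near $1$ and maps $U\subset\B(x,r)$ into a slightly larger ball, $\int_U\norm{\pf\gamma e-e}\d y\le C\int_{\B(x,\kappa' r)}\norm{e}\d y=o(r^{m+1})$, which is absorbed into the $r\epsilon r^m$ part of the right-hand side for small $r$. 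For $\tilde q\in\BVspace_\loc(\Omega;\R^m)$ I would invoke the normalised transformation distance directly from its definition, obtaining (after truncating $\tilde q$ to a neighbourhood to make it globally BV)
\[
    \int_U\norm{\pf\gamma\tilde q-\tilde q}\d y\le\ntdis{\gamma}\,\diam(U)\,\abs{D\tilde q}(\B(x,\kappa r)).
\]
Thus \eqref{eq:bd-w-v-approx-2} reduces to the single density estimate $\abs{D\tilde q}(\B(x,r))=o(r^{m-1})$, i.e. $\Theta^*_{m-1}(\abs{D\tilde q};x)=0$; together with $\diam(U)\le2\kappa r$ this produces the $\ntdis{\gamma}\epsilon r^m$ term.

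The hard part is exactly this density estimate, and it is where the structure must be exploited rather than the triangle inequality, which only gives $\abs{D\tilde q}(\B(x,r))=O(r^{m-1})$ since the two jump masses do not cancel termwise. By Lemma \ref{lemma:bvdiff-jump}, $x\in J_{\bvdiff{w}{x}}$ with traces $w^\pm(x)$ and jump normal $z_\Gamma$, so the jump part of $D\bvdiff{w}{x}$ carries at $x$ the tensor $(w^+(x)-w^-(x))\otimes z_\Gamma=\lambda z_\Gamma\otimes z_\Gamma$, identical to the jump tensor $\lambda z_\Gamma\otimes z_\Gamma$ of $\grad v$ across $H$. To convert this pointwise matching into a cancellation of measures on $\B(x,r)$, I would apply Lemma \ref{lemma:z-u} to $\bvdiff{w}{x}$: the one-sided densities $\Theta^*_{m-1}(\abs{D\bvdiff{w}{x}}\restrict\Gamma^\pm;x)$ vanish, so near $x$ essentially all of $\abs{D\bvdiff{w}{x}}$ concentrates on the $C^1$-graph $\Gamma$, which is tangent to $H$, while $\abs{D\grad v}$ lives on $H$ and the absolutely continuous and Cantor parts of $D\bvdiff{w}{x}$ carry zero $(m-1)$-density at $x$ by Proposition \ref{prop:bv-density}. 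Matching the jump tensors on $\Gamma\approx H$, and using that both one-sided traces of $\tilde q$ at $x$ vanish, then gives $\Theta^*_{m-1}(\abs{D\tilde q};x)=0$. I expect the main technical effort to lie in making this cancellation rigorous: showing that the jump measure of $\bvdiff{w}{x}$ on $\Gamma$ and that of $\grad v$ on $H$ agree to leading order in $r$, despite $\Gamma$ being only $C^1$-tangent to $H$ and $J_{\bvdiff{w}{x}}$ being merely rectifiable.

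Finally, when $x\in\Omega\setminus S_w$ there is no jump: I would take $v$ affine with $\grad v\equiv\tilde w(x)$, so $\tilde q=\bvdiff{w}{x}-\tilde w(x)$ is approximately continuous at $x$ and lies outside the jump set, whence $\Theta^*_{m-1}(\abs{D\tilde q};x)=0$ immediately. Here no preferred direction is singled out, so the estimate goes through for every $\gamma\in\LipClass(\Omega,U)$ without restriction to $z_\Gamma$.
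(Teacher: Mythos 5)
There is a genuine gap, and it sits at the very first step: you assume that $\Gamma$ is tangent at $x$ to the hyperplane $H = x + z_\Gamma^\perp$, ``so that $z_\Gamma$ is its normal there''. Nothing in the definition of a ($C^1$) $(m-1)$-graph gives this: $z_\Gamma$ is only the graphing direction, and $\grad f_\Gamma(P_{z_\Gamma}^\perp x)$ need not vanish at $x$. In the intended application (Proposition \ref{proposition:tgv2-admissibility}) the $C^1$ graphs $\Lambda_i$ are produced by Whitney extension with the direction $z_{\Lambda_i}=z_\Gamma$ inherited from a fixed Lipschitz graph, so obliqueness is generic --- a tilted affine hyperplane $\Gamma$ with $z_\Gamma$ not normal to it already defeats your construction. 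With your flat-kink $v$, both halves of the argument then fail. For \eqref{eq:bd-w-v-approx-1}: the lens between $\Gamma$ and $H$ inside $\B(x,r)$ has measure of order $\abs{\grad f_\Gamma(P_{z_\Gamma}^\perp x)}\, r^m$, not $o(r^m)$, and on it $\norm{w-\grad v}\approx\norm{w^+(x)-w^-(x)}$, so the averaged integral does not tend to zero. For your key density claim: $D\bvdiff{w}{x}$ carries its jump on $\Gamma$ while $D\grad v$ carries an equal-strength jump on the transversal plane $H$; two singular measures concentrated on distinct transversal surfaces cannot cancel, so $\Theta^*_{m-1}(\abs{D(\bvdiff{w}{x}-\grad v)};x)>0$ whenever $w^+(x)\neq w^-(x)$, and the estimate you call the hard part is actually false for your $v$. (A secondary flaw: Lemma \ref{lemma:z-u} is an $\H^{m-1}$-a.e.\ statement about a function's own jump set and cannot be invoked at the prescribed point $x$ with the prescribed graph.)

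The paper's proof repairs exactly this by a shearing construction you are missing. It straightens the graph via $\psi(y)=y+f_\Gamma(P_{z_\Gamma}^\perp y)z_\Gamma$, defines $\bar v$ piecewise affine across the flat interface with the \emph{sheared} gradients $[\grad\psi(\inv\psi(x))]^*w^\pm(x)$ (continuity of $\bar v$ across the interface is where $P_{z_\Gamma}^\perp(w^+(x)-w^-(x))=0$ is used), and sets $v=\pf\psi\bar v$. Then $\grad v=\Psi w_0$ with $w_0 = w^+(x)\chi_{\Gamma^+}+w^-(x)\chi_{\Gamma^-}$ jumping across $\Gamma$ \emph{itself}, and $\Psi$ continuous with $\Psi(x)=I$. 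The comparison is made with $w_0$, not directly with $\grad v$: the jumps of $\bvdiff{w}{x}$ and $w_0$ now live on the same surface and cancel, giving $\abs{D(\bvdiff{w}{x}-w_0)}(U)\le\epsilon r^{m-1}$, which is the rigorous form of the cancellation you hoped for. The residual $\grad v-w_0=(\Psi-I)w_0$ is then handled not by any density cancellation --- $\abs{Dw_0}(U)$ is a full $O(r^{m-1})$ --- but by the commutation $\Psi\circ\inv\gamma=\Psi$, valid precisely because $\gamma\in\LipClass(\Omega,U,z_\Gamma)$ preserves $P_{z_\Gamma}^\perp$, which yields the identity $\pf\gamma(\grad v-w_0)-(\grad v-w_0)=[\Psi-I](\pf\gamma w_0-w_0)$ and hence the crucial factor $\epsilon$ from the continuity of $\Psi$. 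Your outer architecture --- subtracting $\bvdiff{w}{x}$, controlling shifts with $\ntdis{\gamma}$ and the Jacobian/area formula, and taking $v$ affine when $x\notin S_w$ --- does match the paper's; what is missing is the shearing of both the interface and the gradient values, and without it the lemma fails already in the oblique affine case.
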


\begin{proof}
    We first prove the results for $\gamma \in \LipClass(\Omega, U, z_\Gamma)$
    with $\Gamma$ specified.
    We denote for short $z \defeq z_\Gamma$, and let
    \[
        V^\pm \defeq \{ x \in \R^m \mid \pm \iprod{z}{x} > 0 \}.
    \]
    We define the transformation $\psi: \R^m \to \R^m$ by
    \[
        \psi(y) \defeq 
        y + f_\Gamma(P_z^\perp y) z,
    \]
    Then $\psi(V_\Gamma)=g_\Gamma(V_\Gamma)=\Gamma$.
    We observe also that
    \begin{equation}
        \label{eq:inv-psi}
        \inv\psi(y)=y - f_\Gamma(P_z^\perp y) z.
    \end{equation}
    Therefore
    \[
        \grad \inv \psi(y)  = I - (P_z^\perp)^* \grad f_\Gamma(P_z^\perp y) \otimes z,
    \]
    Since $\iprod{z}{(P_z^\perp)^* \grad f_\Gamma(P_z^\perp y)}=0$
    we find that $\grad \inv\psi(x)$ is invertible. Because
    $\grad f_\Gamma$ is by assumption continuous, this implies that
    \[
        \Psi(y) \defeq \grad \inv \psi(y)\grad \psi(\inv\psi(x))
        =
        \grad \inv \psi(y)\inv{[\grad \inv \psi(x)]}
    \]
    is continuous with $\Psi(x)=I$. More precisely
    for any $\epsilon>0$, for suitable $r_\epsilon>0$,
    \begin{equation}
        \label{eq:grad-inv-psi-approx}
        \norm{\Psi(y) -I } \le \epsilon,
        \quad (\norm{P_z^\perp(y-x)} \le r_\epsilon).
    \end{equation}
    
    We then let
    \[
        \begin{split}
        \bar v(y)
        &
        \defeq \iprod{[\grad \psi(\inv\psi(x))]^* w^+(x)}{y-\inv\psi(x)} \chi_{V^+}(y)
            \\
            &
            \phantom{\defeq}
             + \iprod{[\grad \psi(\inv\psi(x))]^* w^-(x)}{y-\inv\psi(x)} \chi_{V^-}(y),
        \end{split}
    \]
    and
    \[
        v \defeq \pf\psi \bar v.
    \]
    Recalling \eqref{eq:inv-psi}, we observe that $v$ is continuous and
    differentiable, $v \in W^{1,1}_\loc(\Omega) \isect C(\Omega)$
    and $x \not \in S_v$.
    This is the only place where we need the 
    assumption $P_{z_\Gamma}^\perp(w^+(x)-w^-(x))=0$.
    Defining
    \[
        w_0(y)
        \defeq w^+(x) \chi_{\Gamma^+}(y)
             + w^-(x) \chi_{\Gamma^-}(y),
    \]
    we get
    \[
        \grad v(y) = \Psi(y) w_0(y).
    \]
    Moreover, given $\epsilon>0$, by the definition of the one-sided
    limits $w^\pm(x)$, we have for some $r_\epsilon >0$ that
    \begin{equation}
        \label{eq:w-w0-epsilon}
        \dashint_{\B(x, r)} \norm{w(y)-w_0(y)} \d y \le \epsilon,
        \quad
        (0 < r < r_\epsilon).
    \end{equation}
    Thus with $C=2\max\{w^+(x), w^-(x)\}$,
    recalling \eqref{eq:grad-inv-psi-approx}, we obtain
    \begin{equation}
        \label{eq:w0-gradv-epsilon}
        \dashint_{\B(x, r)} \norm{w_0(y)-\grad v(y)} \d y
        \le
        \dashint_{\B(x, r)} \norm{\Psi(y)-I}\norm{w_0(y)} \d y
        \le
        C\epsilon,
        \quad
        (0 < r < r_\epsilon).
    \end{equation}
    Combined \eqref{eq:w-w0-epsilon} and \eqref{eq:w0-gradv-epsilon} give
    \[
        \dashint_{\B(x, r)} \norm{w(y)-\grad v(y)} \d y
        \le
        (1+C)\epsilon,
        \quad
        (0 < r < r_\epsilon).
    \]
    Since $\epsilon>0$ was arbitrary, we conclude 
    that \eqref{eq:bd-w-v-approx-1} holds.
    
    We now have to prove \eqref{eq:bd-w-v-approx-2},
    assuming that $w$ is BV-differentiable at $x$.
    We begin by observing that $q \defeq w-w_0$ is then BV-differentiable
    with $\bvdiff{q}{x}=\bvdiff{w}{x}-w_0$. Moreover
    \begin{equation}
        \label{eq:w0-bvdiff}
        \begin{split}
        \int_{U} \norm{\pf\gamma(\bvdiff{w}{x}-w_0)(y) - (\bvdiff{w}{x}(y)-w_0(y))} \d y
        &
        \le \ntdis{\gamma} \diam(U) \abs{D(\bvdiff{w}{x}-w_0)}(U).
        \\
        &
        \le C_m \ntdis{\gamma} r \abs{D(\bvdiff{w}{x}-w_0)}(U),
        \end{split}
    \end{equation}
    for some dimensional constant $C_m$ needed to apply 
    \eqref{eq:ntdis} to vector fields.
    By assumption $\gamma \in \LipClass(\Omega, U, z)$.
    Thus $P_z^\perp \inv\gamma(y)=P_z^\perp y$, which implies
    \begin{equation}
        \label{eq:psi-gamma-invar}
        \Psi \circ \inv \gamma = \Psi.
    \end{equation}
    Consequently
    \[
        \begin{split}
        \pf\gamma(\grad v - w_0) - (\grad v - w_0)
        &
        =
        \pf\gamma([\Psi-I]w_0) - [\Psi-I]w_0
        \\
        &
        =
        [\Psi-I](\pf\gamma w_0 - w_0).
        \end{split}
    \]
    Using \eqref{eq:grad-inv-psi-approx} again,
    \begin{equation}
        \label{eq:v-w0-shift-estim}
        \begin{split}
        \int_{U} \norm{\pf\gamma(\grad v - w_0)(y) & - (\grad v - w_0)(y)} \d y
        \\
        &
        \le
        \epsilon \int_{U} \norm{\pf\gamma w_0(y)-w_0(y)} \d y
        \\
        &
        \le
        \epsilon C_m \ntdis{\gamma} r \abs{Dw_0}(U)
        \le
        \ntdis{\gamma}  C' \epsilon r^m,
        \quad
        (0 < r < r_\epsilon),
        \end{split}
    \end{equation}
    for suitable $r_\epsilon>0$ and some constant $C'=C'(\Gamma, w^\pm(x_0))$.
    Choosing $r_\epsilon>0$ small enough, we may now for $0<r<r_\epsilon$
    finally approximate
    \begin{equation}
        \label{eq:w-v0-semifinal-estim}
        \begin{split}
        \int_{U} \norm{(\pf\gamma w(y) & - \pf\gamma\grad v(y)) - (w(y) - \grad v(y))} \d y
        \\
        &
        \le
        \int_{U} \norm{(\pf\gamma w_0(y)-\pf\gamma\grad v(y)) - (w_0(y) - \grad v(y))} \d y
            \\ & \phantom{\le}
            +
            \int_{U} \norm{(\pf\gamma\bvdiff{w}{x}(y)-\pf\gamma w_0(y)) - (\bvdiff{w}{x}(y)-w_0(y))} \d y
            \\ & \phantom{\le}
            + \int_{U} \norm{\pf\gamma\bvdiff{w}{x}(y)-\pf\gamma w(y)} \d y
            + \int_U \norm{\bvdiff{w}{x}(y)-w(y)} \d y
        \\
        &
        \le \ntdis{\gamma} r \abs{D(\bvdiff{w}{x}-w_0)}(U)
            + \ntdis{\gamma} C' \epsilon r^m
            + \epsilon r^{m+1}.
        \end{split}
    \end{equation}
    For the final inequality, we have used \eqref{eq:w0-bvdiff} and
    \eqref{eq:v-w0-shift-estim} for the two first terms on the left hand side,
    and the definition of BV-differentiability with the area formula
    for the last two terms.
    Referring to Lemma \ref{lemma:bvdiff-jump} and \eqref{eq:bd-density},
    we now observe for suitable $r_\epsilon>0$ that
    \[
        \abs{D(\bvdiff{w}{x}-w_0)}(U) \le \epsilon r^{m-1},
        \quad
        (0 < r < r_\epsilon).
    \]
    The arbitrariness of $\epsilon>0$ allows us to get rid
    of the constant factors in \eqref{eq:w-v0-semifinal-estim},
    and thus conclude the proof of \eqref{eq:bd-w-v-approx-2}
    in the case that $\gamma \in \LipClass(\Omega, U, z_\Gamma)$.

    If $x \in \Omega \setminus S_w$, and $\gamma \in \LipClass(\Omega, U)$ 
    (without any specification of $\Gamma$),
    we set
    \[
        \psi(y) \defeq y.
    \]
    Then
    \[
        \bar v(y) = v(y) = w_0(y) \equiv \tilde w(x).
    \]
    Also
    \[
        \Psi(y) = I,
    \]
    so we get
    \[
        \pf\gamma(\grad v - w_0) - (\grad v - w_0) = 0,
    \]
    and do not need the property \eqref{eq:psi-gamma-invar},
    which is the only place where we used the fact that
    $\gamma \in \LipClass(\Omega, U, z_\Gamma)$. Indeed,
    instead of \eqref{eq:grad-inv-psi-approx}, we have
    the stronger property
    \begin{equation}
        \notag
        \int_{U} \norm{\pf\gamma(\grad v - w_0)(y) - (\grad v - w_0)(y)} \d y = 0.
    \end{equation}
    The rest follows as before.
\end{proof}

\begin{remark}
    Our main reason for introducing the notion of BV-differentiability 
    is to be able to perform the pushforward approximation 
    in \eqref{eq:bd-w-v-approx-2}.
    For this it would also suffice to require the existence of
    $\bvdiff{w}{x,z}^\perp \in \BVspace_\loc(\Omega; z^\perp)$ satisfying
    \begin{equation}
        \label{eq:z-perp-bvdiff}
        \lim_{r \downto 0}
            \dashint_{\B(x, r)} \frac{\norm{P_z^\perp w(y)-\bvdiff{w}{x,z}^\perp(y)}}{r} \d y =0.
    \end{equation}
    Here $z \defeq z_\Gamma$.
    This holds because the slice $u_y^z(t) \defeq \iprod{z}{w(y+t z)} \in \BVspace(\Omega_y^z)$
    for $\Omega_y^z\defeq\{ t \in \R \mid y + t z \in \Omega\}$,
    and $\H^{m-1}$-\ae $y \in P_z^\perp \Omega$ \cite{ambcosdal96}.
    Unfortunately, we do not know much about the slices
    $t \mapsto \iprod{e}{w(y+t z)}$ for $e \perp z$, and therefore
    need to assume BV-differentiability.
    Combined with \eqref{eq:z-perp-bvdiff}, the assumption
    $P_{z_\Gamma}^\perp(w^+(x)-w^-(x))=0$ that we required
    reduces into $x \not \in S_{\bvdiff{w}{x,z}^\perp}$.
    In other words, we an approximate $P_z^\perp w$ by a BV function
    for which $x$ is a Lebesgue point.
    What all this means is that we need to assume extra regularity
    from $w$ in directions parallel to the plane $z_\Gamma^\perp$,
    but do not need to assume anything beyond
    $w \in \BDspace(\Omega)$ along $z_\Gamma$. 
    %
\end{remark}

\subsection{$\TGV$-strict smooth approximation}
\label{sec:tgv-smooth-approx}

We now study alternative forms of strict convergence in $\BVspace(\Omega)$.

\begin{theorem}
    \label{thm:tgv-smooth-approx}
    Suppose $\Omega \subset \R^m$ is open and let $(u, w) \in \BVspace(\Omega) \times \BDspace(\Omega)$. 
    Then there exists a sequence $\{(u^i, w^i)\}_{i=1}^\infty \in C^\infty(\Omega) \times C^\infty(\Omega; \R^m)$
    with
    \begin{align}
        \notag
        u^i & \to u \text{ in } L^1(\Omega), & \norm{Du^i-w^i}_{2,\Meas(\Omega; \R^m)} & \to \norm{Du-w}_{2,\Meas(\Omega; \R^m)}, \\
        \notag
        w^i & \to w \text{ in } L^1(\Omega; \R^m), & \norm{Ew^i}_{F,\Meas(\Omega; \Sym^2(\R^m))} & \to \norm{Ew}_{F,\Meas(\Omega; \Sym^2(\R^m))}.
    \end{align}
    If only $w \in L^1(\Omega; \R^m)$, then we get the three first converges,
    but not the fourth one.
\end{theorem}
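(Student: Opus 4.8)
The plan is to combine the classical strict-approximation argument of Anzellotti--Giaquinta (see \cite[Theorem~3.9]{ambrosio2000fbv}) with a \emph{simultaneous} mollification of $u$ and $w$, exploiting that the two energies $\norm{Du-w}_{2,\Meas(\Omega;\R^m)}$ and $\norm{Ew}_{F,\Meas(\Omega;\Sym^2(\R^m))}$ produce \emph{the same} kind of remainder under a partition of unity. It suffices to produce, for each $\epsilon>0$, a smooth pair $(u_\epsilon, w_\epsilon)$ with $\norm{u_\epsilon-u}_{L^1}+\norm{w_\epsilon-w}_{L^1}<\epsilon$ and with both energies within $\epsilon$ of their targets; diagonalising over $\epsilon=1/i$ then yields the claim. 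To build $(u_\epsilon, w_\epsilon)$, I exhaust $\Omega$ by open sets $\Omega_k \Subset \Omega_{k+1}$, form the associated locally finite annular cover $\{A_k\}$ and a subordinate partition of unity $\{\phi_k\}\subset C_c^\infty$ with $\sum_k\phi_k\equiv 1$ on $\Omega$, so that $\sum_k\grad\phi_k\equiv 0$. Fixing a mollifier $\rho$, I set
\[
    u_\epsilon \defeq \sum_k \rho_{\delta_k}*(u\phi_k), \qquad w_\epsilon \defeq \sum_k \rho_{\delta_k}*(w\phi_k),
\]
choosing each $\delta_k>0$ so small that $\support(\rho_{\delta_k}*(u\phi_k))\subset A_k$ (keeping the sums locally finite, hence $C^\infty$) and that the $L^1$ distances of $\rho_{\delta_k}*(u\phi_k)$, $\rho_{\delta_k}*(w\phi_k)$, $\rho_{\delta_k}*(u\grad\phi_k)$ and $\rho_{\delta_k}*(\grad\phi_k\odot w)$ from the corresponding unmollified objects are each below $\epsilon 2^{-k}$. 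Here $a\odot b\defeq\tfrac12(a\otimes b+b\otimes a)$.

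For the coupled energy, the $\BVspace$ product rule $D(u\phi_k)=\phi_k Du + u\grad\phi_k\,\L^m$ together with $\grad(\rho_{\delta_k}*g)=\rho_{\delta_k}*Dg$ give, after inserting the telescoping remainder via $\sum_k u\grad\phi_k = u\grad(\sum_k\phi_k)=0$,
\[
    Du_\epsilon - w_\epsilon
    = \sum_k \rho_{\delta_k}*\bigl(\phi_k(Du-w)\bigr)
        + \sum_k\bigl[\rho_{\delta_k}*(u\grad\phi_k)-u\grad\phi_k\bigr].
\]
The remainder has $L^1$-norm at most $\sum_k \epsilon 2^{-k}\le 2\epsilon$. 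Writing $\sigma\defeq\norm{Du-w}_2$ for the $2$-variation measure, the pointwise triangle inequality $\norm{(\rho_{\delta_k}*\nu)(x)}_2\le(\rho_{\delta_k}*\abs{\nu}_2)(x)$ for a vector measure $\nu$, followed by mass preservation and Tonelli, yields
\[
    \int_\Omega \Bignorm{\sum_k\rho_{\delta_k}*\bigl(\phi_k(Du-w)\bigr)}_2 \d x
    \le \sum_k\int_\Omega\phi_k\d\sigma = \sigma(\Omega) = \norm{Du-w}_{2,\Meas(\Omega;\R^m)},
\]
using $\sum_k\phi_k=1$. Hence $\limsup_{\epsilon\downto 0}\norm{Du_\epsilon-w_\epsilon}_{2,L^1(\Omega)}\le\norm{Du-w}_{2,\Meas}$.

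For the symmetric energy, the analogous $\BDspace$ product rule $E(w\phi_k)=\phi_k Ew + (\grad\phi_k\odot w)\L^m$ and $E(\rho_{\delta_k}*g)=\rho_{\delta_k}*Eg$ give, now using $\sum_k\grad\phi_k\odot w=\bigl(\grad\sum_k\phi_k\bigr)\odot w=0$,
\[
    Ew_\epsilon = \sum_k\rho_{\delta_k}*(\phi_k Ew)
        + \sum_k\bigl[\rho_{\delta_k}*(\grad\phi_k\odot w)-\grad\phi_k\odot w\bigr],
\]
and the identical estimate bounds $\limsup_{\epsilon\downto 0}\norm{Ew_\epsilon}_{F,L^1(\Omega)}\le\norm{Ew}_{F,\Meas}$. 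Since $u_\epsilon\to u$ and $w_\epsilon\to w$ in $L^1$, we have $Du_\epsilon-w_\epsilon\weaktostar Du-w$ and $Ew_\epsilon\weaktostar Ew$, so lower semicontinuity of the total variation norms under weak* convergence supplies the matching lower bounds; both energies therefore converge strictly, completing the first case. The main point to get right — and the only genuinely new ingredient — is that a \emph{single} partition of unity controls both energies at once: this works precisely because the two remainders, built from $u\grad\phi_k$ and $\grad\phi_k\odot w$, each cancel after summation through the common identity $\sum_k\grad\phi_k=0$, so one family $\{\delta_k\}$ serves both estimates. Finally, if only $w\in L^1(\Omega;\R^m)$, then $Ew$ need not be a measure and the fourth convergence is vacuous, but the construction and the coupled-energy estimate above never used $w\in\BDspace(\Omega)$, only $w\in L^1$, so the first three convergences hold verbatim.
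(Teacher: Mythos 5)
Your proof is correct, and its skeleton is the same as the paper's: the same exhaustion $\Omega_k$, the same subordinate partition of unity, the same mollification of $u\phi_k$ and $w\phi_k$ with $k$-dependent radii, the same four $L^1$-smallness conditions (on $u\phi_k$, $w\phi_k$, $u\grad\phi_k$ and the symmetrised product $\grad\phi_k\odot w$), the same telescoping cancellation via $\sum_k \grad\phi_k \equiv 0$, and lower semicontinuity for the matching lower bound. The one place you genuinely diverge is the upper bound for the energies: the paper estimates $\norm{Du^i-w^i}_{2,\Meas(\Omega;\R^m)}$ by pairing against test functions $\varphi \in C_c^1(\Omega;\R^m)$, which forces it to pre-select the exhaustion so that $\abs{Du-w}(\Omega\setminus\Omega_1)<1/i$ and to invoke the finite-overlap property (each point lies in at most four $V_k$), yielding the bound $\abs{Du-w}(\Omega)+5/i$; you instead use the primal convolution inequality $\norm{(\rho_{\delta_k}*\nu)(x)}_2 \le (\rho_{\delta_k}*\abs{\nu}_2)(x)$ together with mass preservation (legitimate here because you arranged the mollified supports to stay inside the annuli, hence inside $\Omega$) and $\sum_k\phi_k\equiv 1$, which gives the exact bound $\abs{Du-w\L^m}(\Omega)$ with no overlap factor and no $1/i$ slack. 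Your route is the one closer to the classical \cite[Theorem 3.9]{ambrosio2000fbv} argument and is slightly cleaner, since it makes the conditions \eqref{eq:du-omega-one} and the four-fold overlap bound unnecessary; the paper's duality route buys nothing extra here beyond matching the form in which $\abs{Ew}(\Omega)$ is later written as a supremum over symmetric test fields. Two harmless slips: your remainder bound is $\le\epsilon$, not $2\epsilon$ (each of the two remainder families is controlled separately in its own estimate, so they never add), and for the weak* lower bound you should note explicitly that the upper bound itself supplies the uniform mass bounds needed to pass from $L^1$ convergence to weak* convergence of $Du_\epsilon-w_\epsilon$ and $Ew_\epsilon$ --- which is exactly how you use it, so the argument stands.
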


\begin{proof}
    The proof follows the outlines of the approximation of $u \in \BVspace(\Omega)$ in
    terms of strict convergence in $\BVspace(\Omega)$, see \cite[Theorem 3.9]{ambrosio2000fbv}.
    We just have to add a few extra steps to deal with $w$, which is approximated similarly.
    
    To start with the proof, given a positive integer $m$, 
    we set $\Omega_0=\emptyset$ and
    \[
        \Omega_k \defeq \B(0, k+m) \isect \{x \in \Omega \mid \inf_{y \in \BD \Omega} \norm{x-y} \ge 1/(m+k)) \}.
    \]
    We pick $m$ large enough that
    \begin{equation}
        \label{eq:du-omega-one}
        \abs{D u-w}(\Omega \setminus \Omega_1) < 1/i
        \quad
        \text{and}
        \quad
        \abs{Ew}(\Omega \setminus \Omega_1) < 1/i.
    \end{equation}
    With
    \[
        V_k \defeq \Omega_{k+1} \setminus \closure \Omega_{k-1},
    \]
    each $x \in \Omega$ belongs to at most four sets $V_k$.
    We may then find a partition of unity $\{\zeta_k\}_{k=1}^\infty$ with
    $\zeta_k \in C_c^\infty(V_k)$, $0 \le \zeta_k \le 1$ and 
    $\sum_{k=1}^\infty \zeta_k \equiv 1$ on $\Omega$.
    
    With $\{\rho_\epsilon\}_{\epsilon > 0}$ a family of mollifiers, and $\epsilon_k > 0$,
    we let
    \[
        u_k \defeq \rho_{\epsilon_k} * (u \zeta_k),
        \quad
        \text{and}
        \quad
        w_k \defeq \rho_{\epsilon_k} * (w \zeta_k).
    \]
    We select $\epsilon_k > 0$ small enough that
    $\support u_k, \support w_k \subset V_k$ 
    (doable because $\zeta_k \in C_c^\infty(V_k)$), 
    and such that the estimates
    \begin{equation}
        \label{eq:smooth-approx-estim-u}
        \norm{u_k - u \zeta_k } \le 1/(2^ki),
        \quad
        \text{and}
        \quad
        \norm{\rho_{\epsilon_k} * (u \grad \zeta_k) - u \grad \zeta_k } \le 1/(2^k i),
    \end{equation}
    hold, as do
    \begin{equation}
        \label{eq:smooth-approx-estim-w}
        \norm{w_k - w \zeta_k } \le 1/(2^ki),
        \quad
        \text{and}
        \quad
        \norm{\rho_{\epsilon_k} * (w \otimes \grad \zeta_k) - w \otimes \grad \zeta_k } \le 1/(2^k i).
    \end{equation}
    We then let
    \[
        u^i(x) \defeq \sum_{k=1}^\infty u_k(x),
        \quad
        \text{and}
        \quad
        w^i(x) \defeq \sum_{k=1}^\infty w_k(x).
    \]
    By the construction of the partition of unity, for every $x \in \Omega$ there exists
    a neighbourhood of $x$ such the these sums have only finitely many non-zero terms.
    Hence $u^i \in C^\infty(\Omega)$.
    Moreover, as $u=\sum_{k=1}^\infty \zeta_k u$, \eqref{eq:smooth-approx-estim-u} gives
    \[
        \norm{u-u^i} \le \sum_{k=1}^\infty \norm{u_k - u \zeta_k} < 1/i.
    \]
    Thus $u^i \to u$ in $L^1(\Omega)$ as $i \to \infty$.
    Completely analogously $w^i \to w$ in $L^1(\Omega; \R^m)$ as $i \to \infty$.
    
    By lower semicontinuity of the total variation, we have
    \[
        \begin{aligned}
        \norm{Du-w}_{2,\Meas(\Omega; \R^m)} & \le \liminf_{i \to \infty} \norm{Du^i-w^i}_{2,\Meas(\Omega; \R^m)},
        \quad
        \text{and}
        \quad
        \\
        \norm{Ew}_{F,\Meas(\Omega; \Sym^2(\R^m))} & \le \liminf_{i \to \infty} \norm{Ew^i}_{F,\Meas(\Omega; \Sym^2(\R^m))}.
        \end{aligned}
    \]
    It therefore only remains to prove the opposite inequalities.
    Let $\varphi \in C_c^1(\Omega; \R^m)$ with $\sup_{x \in \Omega} \abs{\varphi(x)} \le 1$.
    We have
    \[
        \begin{split}
            \int_\Omega \divergence \varphi(x) u_k(x)
            &
            =
            \int_\Omega \divergence \varphi(x) (\rho_{\epsilon_k} * \zeta_k u)(x) \d x
            \\
            &
            =
            \int_\Omega \divergence (\rho_{\epsilon_k} * \varphi)(x) \zeta_k(x) u(x) \d x
            \\
            &
            =
            \int_\Omega \divergence [\zeta_k (\rho_{\epsilon_k} * \varphi)](x) u(x) \d x
            -
            \int_\Omega \iprod{\grad \zeta_k(x)}{(\rho_{\epsilon_k} * \varphi)(x)} u(x) \d x
            \\
            &
            =
            \int_\Omega \divergence [\zeta_k (\rho_{\epsilon_k} * \varphi)](x) u(x) \d x
            \\
            &
            \phantom{=}
            -
            \int_\Omega \iprod{\varphi(x)}{(\rho_{\epsilon_k} * (u\grad \zeta_k))(x)-(u\grad \zeta_k)(x)} \d x
            \\
            &
            \phantom{=}
            -
            \int_\Omega \iprod{\varphi(x)}{(u\grad \zeta_k)(x)} \d x.
        \end{split}
    \]
    Since $\sum_{k=1}^\infty \grad \zeta_k=0$, we have
    \[
        \sum_{k=1}^\infty \int_\Omega \iprod{\varphi(x)}{(u\grad \zeta_k)(x)} \d x = 0.
    \]
    Thus using \eqref{eq:smooth-approx-estim-u}, we get
    \[
        \begin{split}
            \int_\Omega \divergence \varphi(x) u^i(x)
            &
            =
            \sum_{k=1}^\infty \int_\Omega \divergence \varphi(x) u_k(x)
            \\
            &
            =
            \sum_{k=1}^\infty \int_\Omega \divergence [\zeta_k (\rho_{\epsilon_k} * \varphi)](x) u(x) \d x
            \\
            &
            \phantom{=}
            -
            \sum_{k=1}^\infty \left( \int_\Omega \iprod{\varphi(x)}{(\rho_{\epsilon_k} * (u\grad \zeta_k))(x)-(u\grad \zeta_k)(x)} \d x \right)
            \\
            &
            \le
            \sum_{k=1}^\infty \int_\Omega \divergence \varphi_k(x) u(x) \d x
            + 1/i.
        \end{split}
    \]
    In the final step, we have set
    \[
        \varphi_k \defeq \zeta_k (\rho_{\epsilon_k} * \varphi).
    \]
    By the definition of $w^i$, we also have
    \[
        \begin{split}
        \int_\Omega \iprod{\varphi(x)}{w^i(x)} \d x
        &
        =
        \sum_{k=1}^\infty \int_\Omega \iprod{\varphi(x)}{[\rho_{\epsilon_k} * (w \zeta_k)](x)} \d x
        \\
        &
        =
        \sum_{k=1}^\infty \int_\Omega \iprod{\varphi_k(x)}{w(x)} \d x.
        \end{split}
    \]
    Observing that $-1 \le \varphi_k \le 1$, and using the
    fact that $\sum_{k=1}^\infty \chi_{V_k} \le 4$, we further get
    \[
        \begin{split}
            \int_\Omega \varphi(x) \d[Du^i-w^i](x) 
            &
            =
            -\int_\Omega \divergence \varphi(x) u^i(x) + \iprod{\varphi(x)}{w^i(x)} \d x
            \\
            &
            \le
            -\int_\Omega \divergence \varphi_1(x) u(x) +\iprod{\varphi_1(x)}{w(x)} \d x
            \\
            &
            \phantom{\le}
            -
            \sum_{k=2}^\infty
            \int_\Omega \divergence \varphi_k(x) u(x) + \iprod{\varphi_k(x)}{w(x)} \d x
            + 1/i
            \\
            &
            \le
            \abs{Du-w}(\Omega)
            +
            \sum_{k=2}^\infty
            \abs{Du-w}(V_k)
            + 1/i
            \\
            &
            \le
            \abs{Du-w}(\Omega)
            +
            4 \abs{Du-w}(\Omega \setminus \Omega_1)
            + 1/i
            \\
            &
            \le
            \abs{Du-w}(\Omega)
            + 5/i.
        \end{split}
    \]
    In the final step we have used \eqref{eq:du-omega-one}.
    This shows that $\norm{Du^i-w^i}_{2,\Meas(\Omega; \R^m)} \to \norm{Du-w}_{2,\Meas(\Omega; \R^m)}$.
    
    Next we recall that
    \[
        \abs{Ew}(\Omega) = \sup_{\varphi \in C_c^\infty(\Omega; \Sym^{n \times n}) : \norm{\varphi(x)}_\infty \le 1} \int_\Omega \iprod{\divergence \varphi(x)}{w(x)} \d x
    \]
    with the divergence taken columnwise. Therefore, arguments analogous to the 
    ones above show that $\norm{Ew^i}_{F,\Meas(\Omega; \Sym^2(\R^m))} \to \norm{Ew}_{F,\Meas(\Omega; \Sym^2(\R^m))}$
    if $w \in \BDspace(\Omega)$. If only $w \in L^1(\Omega; \R^m)$, 
    then we do not get this converges, but the proof of the other
    converges did not depend on $w \in \BDspace(\Omega)$ at all.
    This concludes the proof.
\end{proof} 

For our present needs, the most important corollary of the above theorem is the following.

\begin{corollary}
    \label{corollary:w-smooth-approx}
    Suppose $\Omega \subset \R^m$ is open and let $(u, w) \in \BVspace(\Omega) \times L^1(\Omega; \R^m)$. 
    Then there exists a sequence $\{u^i\}_{i=1}^\infty \in C^\infty(\Omega)$ with
    \begin{equation}
        \label{eq:w-smooth-approx}
        u^i \to u \text{ in } L^1(\Omega) \quad \text{and} \quad \norm{Du^i-w}_{2,\Meas(\Omega; \R^m)} \to \norm{Du-w}_{2,\Meas(\Omega; \R^m)},
    \end{equation}
    as well as $Du^i \weaktostar Du$ weakly* in $\Meas(\Omega; \R^m)$.
\end{corollary}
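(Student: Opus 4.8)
The plan is to read off the sequence $u^i$ directly from Theorem~\ref{thm:tgv-smooth-approx} and then perform two elementary adjustments. Applying the theorem in its $w \in L^1(\Omega; \R^m)$ form yields $u^i \in C^\infty(\Omega)$ together with auxiliary smooth fields $w^i$ such that $u^i \to u$ in $L^1(\Omega)$, $w^i \to w$ in $L^1(\Omega; \R^m)$, and $\norm{Du^i - w^i}_{2,\Meas(\Omega; \R^m)} \to \norm{Du-w}_{2,\Meas(\Omega; \R^m)}$. The fields $w^i$ serve only as scaffolding and are dropped from the final statement.

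To obtain the norm convergence in \eqref{eq:w-smooth-approx} I would replace $w^i$ by the fixed field $w$. Since each $u^i$ is smooth, $Du^i = \grad u^i \L^m$ is absolutely continuous, so, viewing $w$ and $w^i$ as $\L^m$-densities, the reverse triangle inequality for the Radon norm gives
\[
    \bigabs{\norm{Du^i - w}_{2,\Meas(\Omega; \R^m)} - \norm{Du^i - w^i}_{2,\Meas(\Omega; \R^m)}}
    \le \norm{w - w^i}_{2, L^1(\Omega)},
\]
and the right-hand side vanishes in the limit because $w^i \to w$ in $L^1(\Omega; \R^m)$. Together with the theorem's convergence $\norm{Du^i - w^i}_{2,\Meas(\Omega; \R^m)} \to \norm{Du-w}_{2,\Meas(\Omega; \R^m)}$ this delivers $\norm{Du^i - w}_{2,\Meas(\Omega; \R^m)} \to \norm{Du-w}_{2,\Meas(\Omega; \R^m)}$.

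For the weak* convergence $Du^i \weaktostar Du$ I would first secure a uniform total variation bound. As convergent sequences are bounded, both $\sup_i \norm{w^i}_{2,L^1(\Omega)}$ and $\sup_i \norm{Du^i - w^i}_{2,\Meas(\Omega; \R^m)}$ are finite, so
\[
    \abs{Du^i}(\Omega) = \norm{Du^i}_{2,\Meas(\Omega; \R^m)}
    \le \norm{Du^i - w^i}_{2,\Meas(\Omega; \R^m)} + \norm{w^i}_{2,L^1(\Omega)}
    \le C
\]
uniformly in $i$. Testing against $\varphi \in C_c^1(\Omega; \R^m)$ and using $u^i \to u$ in $L^1(\Omega)$ then gives
\[
    \int_\Omega \varphi(x) \d Du^i(x) = -\int_\Omega u^i(x) \divergence \varphi(x) \d x
    \to -\int_\Omega u(x) \divergence \varphi(x) \d x = \int_\Omega \varphi(x) \d Du(x),
\]
and a density argument ($C_c^1$ dense in $C_0$ in the supremum norm) combined with the uniform bound $\abs{Du^i}(\Omega) \le C$ extends this to all $\varphi \in C_0(\Omega; \R^m)$, which is precisely weak* convergence.

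The substantive work all lives in Theorem~\ref{thm:tgv-smooth-approx}; the corollary is a routine consequence. The one place to be careful is the uniform total variation bound, since without it $L^1$-convergence of $u^i$ would not upgrade to weak* convergence of $Du^i$. That bound is immediate from the convergences already produced, so I anticipate no real obstacle.
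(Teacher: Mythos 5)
Your proposal is correct and follows essentially the same route as the paper: apply Theorem \ref{thm:tgv-smooth-approx} in its $w \in L^1(\Omega; \R^m)$ form and then discard the scaffolding fields $w^i$ via the triangle inequality for the Radon norm, exactly as the paper does with its two-sided estimate. The only divergence is in the weak* step, where the paper simply extracts a subsequence from the mass-bounded sequence, while you prove $Du^i \weaktostar Du$ for the \emph{full} sequence directly from $u^i \to u$ in $L^1(\Omega)$, the uniform bound $\abs{Du^i}(\Omega) \le C$, and density of $C_c^1(\Omega; \R^m)$ in $C_0(\Omega; \R^m)$ --- a marginally stronger and more self-contained conclusion, but not a different method.
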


\begin{proof}
    Let $\{(u^i, w^i)\}_{i=1}^\infty \in C^\infty(\Omega) \times C^\infty(\Omega; \R^m)$
    be given by Theorem \ref{thm:tgv-smooth-approx}. Then
    \[
        \begin{split}
        \lim_{i \to \infty}
        \norm{Du^i-w}_{2,\Meas(\Omega; \R^m)}
        &
        \le \lim_{i \to \infty} \bigl( \norm{Du^i-w^i}_{2,\Meas(\Omega; \R^m)}
            + \norm{w^i-w}_{2,L^1(\Omega; \R^m)} \bigr)
        \\
        &
        = \norm{Du-w}_{2,\Meas(\Omega; \R^m)}.
        \end{split}
    \]
    Analogously we deduce
    \[
        \lim_{i \to \infty}
        \norm{Du^i-w}_{2,\Meas(\Omega; \R^m)}
        \ge \norm{Du-w}_{2,\Meas(\Omega; \R^m)}.
    \]
    This gives \eqref{eq:w-smooth-approx}.
    Clearly, by moving to a subsequence of the original bounded sequence,
    we may further force $Du^i \weaktostar Du$ weakly* in $\Meas(\Omega; \R^m)$.
\end{proof}

The following corollary shows the approximability of $u \in \BVspace(\Omega)$ 
in terms of $\TGV^2$-strict convergence. It is of course easy to extend 
to $\TGV^k$ for $k>2$.

\begin{corollary}
    \label{cor:tgv-smooth-approx}
    Suppose $\Omega \subset \R^m$ is open and let $u \in \BVspace(\Omega)$. 
    Then there exists a sequence $\{u^i\}_{i=1}^\infty \in C^\infty(\Omega)$
    with $u^i \to u$ in $L^1(\Omega)$,
    $Du^i \weaktostar Du$ weakly* in $\Meas(\Omega; \R^m)$,
    and $\TGV^2_{(\beta,\alpha)}(u^i) \to \TGV^2_{(\beta,\alpha)}(u)$
    for any $\alpha, \beta > 0$.
\end{corollary}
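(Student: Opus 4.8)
The plan is to reduce the corollary to Theorem~\ref{thm:tgv-smooth-approx} by exploiting that $\TGV^2$ is itself a minimisation over the second-order variable $w$. First I would fix a minimiser $w \in \BDspace(\Omega)$ of the defining problem \eqref{eq:tgv2-def}, so that $\TGV^2_{(\beta,\alpha)}(u) = \alpha \norm{Du-w}_{2,\Meas(\Omega; \R^m)} + \beta \norm{Ew}_{F,\Meas(\Omega; \Sym^2(\R^m))}$. That such a minimiser exists is the standard well-posedness of $\TGV^2$ (cf.\ \cite{bredies2009tgv}), resting on the Sobolev--Korn inequality recalled in Section~\ref{sec:prelim} for coercivity in $w$; should the minimum fail to be attained one may instead take near-minimisers and diagonalise, but the attained case is cleaner. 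Applying Theorem~\ref{thm:tgv-smooth-approx} to the pair $(u,w) \in \BVspace(\Omega) \times \BDspace(\Omega)$ then produces smooth $(u^i, w^i)$ with $u^i \to u$ and $w^i \to w$ in $L^1$, together with the two norm convergences $\norm{Du^i-w^i}_{2,\Meas(\Omega; \R^m)} \to \norm{Du-w}_{2,\Meas(\Omega; \R^m)}$ and $\norm{Ew^i}_{F,\Meas(\Omega; \Sym^2(\R^m))} \to \norm{Ew}_{F,\Meas(\Omega; \Sym^2(\R^m))}$.

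For the upper bound I would use $w^i$ as a competitor in the minimisation defining $\TGV^2_{(\beta,\alpha)}(u^i)$, giving $\TGV^2_{(\beta,\alpha)}(u^i) \le \alpha \norm{Du^i-w^i}_{2,\Meas(\Omega; \R^m)} + \beta \norm{Ew^i}_{F,\Meas(\Omega; \Sym^2(\R^m))}$. Letting $i \to \infty$, the right-hand side converges to $\alpha \norm{Du-w}_{2,\Meas(\Omega; \R^m)} + \beta \norm{Ew}_{F,\Meas(\Omega; \Sym^2(\R^m))} = \TGV^2_{(\beta,\alpha)}(u)$ by the choice of $w$, so $\limsup_{i} \TGV^2_{(\beta,\alpha)}(u^i) \le \TGV^2_{(\beta,\alpha)}(u)$.

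For the remaining two properties and the matching lower bound, I would first record the uniform bound $\abs{Du^i}(\Omega) \le \norm{Du^i-w^i}_{2,\Meas(\Omega; \R^m)} + \norm{w^i}_{2,L^1(\Omega; \R^m)}$, whose right-hand side is bounded because both terms converge. Combined with $u^i \to u$ in $L^1(\Omega)$, weak* compactness in $\BVspace(\Omega)$ lets me pass to a subsequence (which I keep as my sequence) along which $Du^i \weaktostar Du$ in $\Meas(\Omega; \R^m)$, i.e.\ $u^i \weaktostar u$ weakly* in $\BVspace(\Omega)$. Since $\TGV^2_{(\beta,\alpha)}$ is lower semi-continuous with respect to weak* convergence in $\BVspace(\Omega)$ (a standard property, cf.\ \cite{bredies2009tgv}, and part of its admissibility as a regulariser), this yields $\TGV^2_{(\beta,\alpha)}(u) \le \liminf_{i} \TGV^2_{(\beta,\alpha)}(u^i)$. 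Together with the $\limsup$ bound, the convergence $\TGV^2_{(\beta,\alpha)}(u^i) \to \TGV^2_{(\beta,\alpha)}(u)$ follows, and all three asserted conclusions hold for the selected sequence.

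Aside from the routine compactness bookkeeping, the only genuinely load-bearing inputs are the attainment of the minimiser $w$ and the weak* lower semi-continuity of $\TGV^2$. I expect the attainment step to be the one most worth stating carefully: it is precisely what lets the smooth approximation of the \emph{fixed} pair $(u,w)$ control $\TGV^2(u)$ from above, turning the otherwise one-sided semi-continuity into a genuine limit. Everything else is a soft semicontinuity-and-compactness sandwich squeezing $\limsup$ and $\liminf$ together.
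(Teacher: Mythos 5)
Your proposal is correct and follows essentially the same route as the paper: fix a minimiser $w \in \BDspace(\Omega)$ of the cascade formulation (the paper cites \cite{l1tgv} for its existence), apply Theorem~\ref{thm:tgv-smooth-approx} to the pair $(u,w)$, use $w^i$ as a competitor for the $\limsup$ bound, pass to a subsequence of the bounded sequence to force $Du^i \weaktostar Du$, and close with the weak* lower semicontinuity of $\TGV^2_{(\beta,\alpha)}$ (Lemma~\ref{lemma:norm-equiv-tgv2}). The only differences are cosmetic — you spell out the uniform bound on $\abs{Du^i}(\Omega)$ that the paper leaves implicit via Corollary~\ref{corollary:w-smooth-approx}.
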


\begin{proof}
    Let $w$ achieve the minimum in the differentiation cascade of
    definition \eqref{eq:tgv2-def} of $\TGV^2$, the minimiser
    existing by \cite{l1tgv}. Let then the sequence
    $\{(u^i, w^i)\}_{i=1}^\infty \in C^\infty(\Omega) \times C^\infty(\Omega; \R^m)$
    be given by Theorem \ref{thm:tgv-smooth-approx}.
    As in the proof of Corollary \ref{corollary:w-smooth-approx},
    we may assume that $Du^i \weaktostar Du$ weakly* in $\Meas(\Omega; \R^m)$.
    
    To see the convergence of $\TGV^2_{(\beta,\alpha)}(u^i)$ to
    $\TGV^2_{(\beta,\alpha)}(u)$, we observe that by definition
    \[
        \TGV^2_{(\beta,\alpha)}(u^i)
        \le \alpha \norm{Du^i-w^i}_{2,\Meas(\Omega; \R^m)} + \beta \norm{Ew^i}_{F,\Meas(\Omega; \Sym^2(\R^m))}
    \]
    Moreover
    \[
        \lim_{i \to \infty}\alpha \norm{Du^i-w^i}_{2,\Meas(\Omega; \R^m)} + \beta \norm{Ew^i}_{F,\Meas(\Omega; \Sym^2(\R^m))}
        = 
        \TGV^2_{(\beta,\alpha)}(u).
    \]
    Since the $\TGV^2$ functional is lower semicontinuous with respect to weak* convergence
    in $\BVspace(\Omega)$ (\cite{bredies2009tgv}, see also Lemma \ref{lemma:norm-equiv-tgv2}
    below), the claim follows.
\end{proof}

\section{Higher-order regularisers}
\label{sec:reg}

We now study partial double-Lipschitz comparability of second-
and higher-order regularisers. We start in Section \ref{sec:tgv2}
with $\TGV$, after which in Section \ref{sec:tgv2var} we consider
variants of $\TGV^2$ for which we have stronger results than
$\TGV^2$ itself. We finish in Section \ref{sec:ictv}
with infimal convolution $\TV$.

\subsection{Second-order total generalised variation}
\label{sec:tgv2}

Total generalised variation was introduced in \cite{bredies2009tgv}
as a higher-order extension of TV that avoids the stair-casing effect.
Following the \emph{differentiation cascade} formulation of
\cite{sampta2011tgv,l1tgv}, it may be defined for $u \in \BVUspace$ 
and $\alphavec=(\beta,\alpha)$ as
\begin{equation}
    \label{eq:tgv2-scalar-cascade}
    \TGV^2_{\alphavec}(u) \defeq
        \min_{w \in \Wspace}
        \alpha \norm{D u - w}_{2,\EUspace}
        +\beta \norm{E w}_{F,\EWspace},
\end{equation}
with a minimising $w \in \BDspace(\Omega)$ existing. 
Clearly
\[
    \TGV^2_{\alphavec}(u) \le \alpha \TV(u).
\]
Moreover, $\TGV^2_{\alphavec}$ is a seminorm.
In fact, it turns out that the norms $\norm{u}_{L^1}+\TV(u)$ 
and $\norm{u}_{L^1}+\TGV^2_\alphavec(u)$ are equivalent,
as shown in \cite{sampta2011tgv,l1tgv}. In other words
$\TGV^2_\alphavec$ induces the same topology in $\BDspace(\Omega)$
as $\TV$ does, but different geometry, as can be witnessed from
often much improved behaviour in practical image processing tasks.

\begin{lemma}
    \label{lemma:norm-equiv-tgv2}
    Let $\Omega \subset \R^m$ be a bounded domain with Lipschitz boundary.
    Then there exist constants $c,C > 0$, dependent on $\Omega$,
    such that for all $u \in L^1(\Omega)$ it holds
    \begin{equation}
        \label{eq:tgv2-equivalence}
        c \bigl( 
            \norm{u}_{\Uspace}
            + 
            \norm{Du}_{\EUspace}
        \bigr)
        \le
        \norm{u}_{F,1}+\TGV^2_{(\beta,\alpha)}(u)
        \le
        C \bigl( 
            \norm{u}_{\Uspace}
            + 
            \norm{Du}_{\EUspace}
        \bigr).
    \end{equation}
    Moreover, the functional $\TGV^2_{\alphavec}$ is lower semicontinuous
    with respect to weak* convergence in $\BVUspace$.
\end{lemma}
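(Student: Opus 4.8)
The plan is to read $\norm{u}_{F,1}=\norm{u}_{\Uspace}$ (the Frobenius base norm on scalars is the absolute value), so that, after absorbing the equivalence of the finitely many admissible base norms on $\EUspace$ into the constants $c,C$, the claim reduces to comparing the seminorm $\TGV^2_{(\beta,\alpha)}(u)$ with $\norm{Du}_{2,\EUspace}$. The upper bound is immediate: taking $w=0$ in the cascade \eqref{eq:tgv2-scalar-cascade} gives $\TGV^2_{(\beta,\alpha)}(u)\le \alpha\norm{Du}_{2,\EUspace}$, as already noted, so $\norm{u}_{\Uspace}+\TGV^2_{(\beta,\alpha)}(u)\le \max\{1,\alpha\}(\norm{u}_{\Uspace}+\norm{Du}_{2,\EUspace})$; if $u\notin\BVspace(\Omega)$ both sides are infinite and there is nothing to prove.

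For the lower bound it suffices to prove $\norm{Du}_{2,\EUspace}\le C'(\norm{u}_{\Uspace}+\TGV^2_{(\beta,\alpha)}(u))$. If $\TGV^2_{(\beta,\alpha)}(u)<\infty$ a minimising $w\in\BDspace(\Omega)$ exists and $Du=(Du-w\L^m)+w\L^m$ is a finite measure, so $u\in\BVspace(\Omega)$; hence the case $u\notin\BVspace(\Omega)$ forces $\TGV^2_{(\beta,\alpha)}(u)=\infty$ and the inequality is trivial. Assuming $\TGV^2_{(\beta,\alpha)}(u)<\infty$ and letting $w$ be the minimiser, the triangle inequality gives
\[
    \norm{Du}_{2,\EUspace}\le \norm{Du-w}_{2,\EUspace}+\norm{w}_{2,\Wspace}
    \le \tfrac{1}{\alpha}\TGV^2_{(\beta,\alpha)}(u)+\norm{w}_{2,\Wspace},
\]
so the whole problem is to control $\norm{w}_{2,\Wspace}$. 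By the Sobolev--Korn inequality there is $\bar w\in\ker E$ (an affine field with skew-symmetric linear part) with $\norm{w-\bar w}_{2,\Wspace}\le C_\Omega\norm{Ew}_{F,\EWspace}\le (C_\Omega/\beta)\TGV^2_{(\beta,\alpha)}(u)$, reducing matters to bounding the kernel component $\bar w$.

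Bounding $\bar w$ is the crux. Since $\ker E$ is finite-dimensional and $\norm{\freevar}_{2,\Wspace}$ restricts to a genuine norm on it, I can fix finitely many $\varphi_1,\dots,\varphi_N\in C_c^\infty(\Omega;\R^m)$ for which the moment map $\bar w\mapsto(\int_\Omega\iprod{\bar w}{\varphi_j}\d x)_j$ is injective on $\ker E$; then $\norm{\bar w}_{2,\Wspace}\le C\max_j\bigl|\int_\Omega\iprod{\bar w}{\varphi_j}\d x\bigr|$ by equivalence of norms on a finite-dimensional space. For each fixed $\varphi_j$ I would split
\[
    \int_\Omega\iprod{\bar w}{\varphi_j}\d x
    = -\int_\Omega u\,\divergence\varphi_j\d x
      -(Du-w\L^m)(\varphi_j)
      -\int_\Omega\iprod{w-\bar w}{\varphi_j}\d x,
\]
using $Du(\varphi_j)=-\int_\Omega u\,\divergence\varphi_j\d x$; the three terms are bounded respectively by $\norm{\divergence\varphi_j}_\infty\norm{u}_{\Uspace}$, $\norm{\varphi_j}_\infty\norm{Du-w}_{2,\EUspace}$ and $\norm{\varphi_j}_\infty\norm{w-\bar w}_{2,\Wspace}$, each controlled by $\norm{u}_{\Uspace}+\TGV^2_{(\beta,\alpha)}(u)$. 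Combining, $\norm{\bar w}_{2,\Wspace}\le C''(\norm{u}_{\Uspace}+\TGV^2_{(\beta,\alpha)}(u))$, and hence $\norm{w}_{2,\Wspace}$ and $\norm{Du}_{2,\EUspace}$ enjoy the same bound. Equivalently one may argue by contradiction: a normalised sequence $v^i$ with $\norm{Dv^i}_{2,\EUspace}=1$ and $\norm{v^i}_{\Uspace}+\TGV^2_{(\beta,\alpha)}(v^i)\to0$ would, via Sobolev--Korn and compactness of $\ker E$, produce $w^i\to\bar w\ne0$ in $\Wspace$ with $Dv^i\weaktostar\bar w\L^m$, while $v^i\to0$ in $L^1$ forces $Dv^i\weaktostar0$, a contradiction. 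The essential difficulty, and the reason $\TGV^2$ mimics $\TV$ only up to this kernel, is exactly this step: Korn-type control captures $w$ only modulo $\ker E$, and the missing rigid part must be recovered through its coupling to $Du$.

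Finally, for lower semicontinuity I take $u^i\weaktostar u$ weakly* in $\BVspace(\Omega)$ and may assume $L:=\liminf_i\TGV^2_{(\beta,\alpha)}(u^i)<\infty$, passing to a subsequence attaining $L$ with $\TGV^2_{(\beta,\alpha)}(u^i)\le L+1$. The minimisers $w^i$ satisfy $\norm{Ew^i}_{F,\EWspace}\le(L+1)/\beta$ and $\norm{w^i}_{2,\Wspace}\le\norm{Du^i}_{2,\EUspace}+(L+1)/\alpha$, which is bounded since weak* convergent sequences are bounded in $\BVspace(\Omega)$; thus $\{w^i\}$ is bounded in $\BDspace(\Omega)$ and, by weak* compactness together with the compact embedding $\BDspace(\Omega)\hookrightarrow\Wspace$, a subsequence satisfies $w^i\to w$ in $\Wspace$ and $Ew^i\weaktostar Ew$. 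Then $Du^i-w^i\L^m\weaktostar Du-w\L^m$, and lower semicontinuity of the total variation norm under weak* convergence of measures yields
\[
    \TGV^2_{(\beta,\alpha)}(u)
    \le \alpha\norm{Du-w}_{2,\EUspace}+\beta\norm{Ew}_{F,\EWspace}
    \le \liminf_i\bigl(\alpha\norm{Du^i-w^i}_{2,\EUspace}+\beta\norm{Ew^i}_{F,\EWspace}\bigr)=L,
\]
as required; alternatively one simply invokes the dual representation of $\TGV^2$ as a supremum of $L^1(\Omega)$-continuous linear functionals, which is lower semicontinuous by construction.
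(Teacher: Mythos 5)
Your proof is correct, but it takes a genuinely different route from the paper: the paper's ``proof'' of Lemma \ref{lemma:norm-equiv-tgv2} is purely a citation --- lower semicontinuity is quoted from the original dual-ball formulation of \cite{bredies2009tgv}, and the norm equivalence together with the equivalence of that formulation to the cascade \eqref{eq:tgv2-scalar-cascade} is quoted from \cite{sampta2011tgv,l1tgv} --- whereas you reconstruct the substance of those references directly within the cascade formulation. Your key step, controlling the minimising $w$ via Sobolev--Korn modulo $\ker E$ and then recovering the finite-dimensional rigid part through its pairing with finitely many test functions $\varphi_j$ (using $Du(\varphi_j)=-\int_\Omega u\,\divergence\varphi_j\d x$ and equivalence of norms on $\ker E$), is a clean quantitative version of the compactness-and-contradiction argument that the paper itself alludes to in the proof of Proposition \ref{prop:f-psi}; your lsc argument (boundedness of the $w^i$ in $\BDspace(\Omega)$, compact embedding into $\Wspace$, and lower semicontinuity of the Radon norm, with $\liminf$-superadditivity splitting the two terms) is likewise the standard direct proof and is sound. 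What each approach buys: the paper's citation is short and gets lsc ``for free'' from the dual-ball definition as a supremum of $L^1$-continuous functionals (which you mention as an aside, though note that invoking it requires the formulation equivalence you are otherwise avoiding), while your argument is self-contained and makes the dependence of the constants on $\alpha$, $\beta$, $C_\Omega$ explicit. Two minor points: you lean on the existence of an exact minimiser $w$, which the paper asserts via \cite{l1tgv}, but since existence is itself typically proved by exactly the coercivity you are establishing, it would be cleaner (and costs nothing) to run both the norm-equivalence and lsc arguments with near-minimisers $\alpha\norm{Du-w}_{2,\EUspace}+\beta\norm{Ew}_{F,\EWspace}\le\TGV^2_\alphavec(u)+\epsilon$, avoiding any appearance of circularity; and the compact embedding $\BDspace(\Omega)\hookrightarrow\Wspace$ you invoke does require the bounded Lipschitz domain hypothesis, so you should say where it is used.
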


\begin{proof}
    Lower semicontinuity is proved in \cite{bredies2009tgv}
    for the original dual ball formulation. Equivalence to the 
    differentiation cascade formulation presented here is proved in
    \cite{sampta2011tgv,l1tgv}, where the norm equivalence is
    also proved.
\end{proof}

The following proposition states what we can say about partial double-Lipschitz
comparability of standard $\TGV^2$. Unfortunately, we cannot prove
Assumption \ref{ass:comp}\ref{item:ass:comp-gamma} quite exactly,
only for $\H^{m-1}$-\ae $x \in \Gamma \isect D_w \isect O_w^\Gamma$, where 
we denote by $D_w \subset \Omega$ the set of points where $w$ is 
BV-differentiable, and by $O_w^\Gamma$ the set of points $x \in \Gamma$
where $P_{z_\Gamma}^\perp(w^+(x)-w^-(x))=0$.

\begin{proposition}
    \label{proposition:tgv2-admissibility}
    Let $\Omega \subset \R^m$ be a bounded domain with Lipschitz boundary.
    Then $\TGV^2_\alpha$ is an admissible regularisation functional on $\Uspace$
    and satisfies Assumption \ref{ass:comp}\ref{item:ass:comp-omega}.
    Moreover, for any $u \in \BVspace(\Omega)$, a minimiser $w \in \BDspace(\Omega)$
    of \eqref{eq:tgv2-scalar-cascade}, and any Lipschitz 
    $(m-1)$-graph $\Gamma \subset \Omega$, the following holds.
    \begin{enumroman}
        \renewcommand{\theenumi}{(ii')}
        \renewcommand{\labelenumi}{(ii')}
        \item
        \label{item:tgv2:comp-gamma}
        $\TGV_\alphavec^2$ is partially double-Lipschitz comparable
        for $u$ in the direction $z_\Gamma$ at $\H^{m-1}$-\ae $x \in \Gamma \isect D_w \isect O_w^{\Gamma}$.
    \end{enumroman}
\end{proposition}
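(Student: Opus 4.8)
The plan is to dispatch admissibility and the easy branch of the assumption first, then to reduce the partial comparability for $\TGV^2_\alphavec$ to a purely first-order estimate, and finally to carry out that estimate with the shift field supplied by Lemma \ref{lemma:bd-w-v-approx}. First I would record that $\TGV^2_\alphavec$ is admissible: it is a convex seminorm, and both its weak* lower semicontinuity and the coercivity bound \eqref{eq:r-tv-bound} are immediate from the norm equivalence of Lemma \ref{lemma:norm-equiv-tgv2}. For Assumption \ref{ass:comp}\ref{item:ass:comp-omega} I would fix a minimiser $w$ of \eqref{eq:tgv2-scalar-cascade}; since $\L^m(S_w)=0$, at $\L^m$-\ae $x$ we have $x\in\Omega\setminus S_w$, and the final (unconstrained) branch of Lemma \ref{lemma:bd-w-v-approx} produces $v$ together with the pushforward bound for $\gamma\in\LipClass(\Omega,U)$. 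The first-order reduction below then applies verbatim, with $\abs{D(u-v)}(\closure U)=O(r^m)$ because $x$ is not a jump point.

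For \ref{item:tgv2:comp-gamma} I fix the minimiser $w$, discard the $\H^{m-1}$-null sets on which $u$ lacks one-sided Lebesgue traces (Lemma \ref{lemma:z-u}) or $w$ lacks traces along $\Gamma$, and use rectifiability to reduce to the case that $\Gamma$ is $C^1$ near the chosen $x_0\in\Gamma\isect D_w\isect O_w^\Gamma$. Lemma \ref{lemma:bd-w-v-approx} then yields $v\in W^{1,1}_\loc(\Omega)$ with $x_0\not\in S_v$, with $\grad v$ approximating $w$ as in \eqref{eq:bd-w-v-approx-1}, and with the shift bound \eqref{eq:bd-w-v-approx-2}. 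The decisive move is to estimate $\TGV^2_\alphavec(\partpf{\gamma}{u}{v})$ by testing the cascade \eqref{eq:tgv2-scalar-cascade} with the \emph{same} $w$ that is optimal for $u$. Writing $q\defeq u-v$ and $\tilde q\defeq w-\grad v$, and using $\partpf{\gamma}{u}{v}=\pf\gamma q+v$ with $Dv=\grad v\,\L^m$, one finds $D(\partpf{\gamma}{u}{v})-w=D(\pf\gamma q)-\tilde q\,\L^m$ while the second-order term is exactly $\beta\norm{Ew}_{F,\EWspace}$. Hence the second-order contributions cancel identically in $R(\partpf{\gammaone}{u}{v})+R(\partpf{\gammatwo}{u}{v})-2R(u)$, and everything collapses to a first-order difference for the residual measure $\mu\defeq Dq-\tilde q\,\L^m=Du-w$.

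To bound this first-order difference I would invoke the pushforward identity from Part 1, $D(\pf\gamma q)=\pf\gamma(\lipjac{\gamma}\,Dq)$, together with the exact decomposition
\[
    D(\pf\gamma q)-\tilde q\,\L^m
    = \pf\gamma(\lipjac{\gamma}\mu)
      + \pf\gamma\bigl((\lipjac{\gamma}-I)\tilde q\,\L^m\bigr)
      - \bigl[(\pf\gamma\tilde q)\L^m-\pf\gamma(\tilde q\,\L^m)\bigr]
      + (\pf\gamma\tilde q-\tilde q)\L^m .
\]
Outside $\closure U$ all three terms in the comparison coincide and cancel, so I work on $\closure U$, where $\gamma$ acts as a bijection. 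With $\sigma_\mu\defeq d\mu/d\abs{\mu}$ a unit vector, the leading pieces summed against $-2\norm{\mu}_2$ give $\int_{\closure U}(\norm{\lipjac{\gammaone}\sigma_\mu}+\norm{\lipjac{\gammatwo}\sigma_\mu}-2)\d\abs{\mu}\le\bitrans{\gammaone}{\gammatwo}\abs{\mu}(\closure U)$ straight from the definition of $\bitrans{\gammaone}{\gammatwo}$; since $\abs{\mu}(\closure U)\le\abs{D(u-v)}(\closure U)+\int_U\norm{\tilde q}$, this supplies the required $\RCa\bitransfull{\gammaone}{\gammatwo}\abs{D(u-v)}(\closure U)$ term. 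The $(\lipjac{\gamma}-I)$ term and the Jacobian mismatch are each bounded by $C\bitransfull{\gammaone}{\gammatwo}^{1/2}\int_U\norm{\tilde q}$, using $\norm{\lipjac{\gamma}-I}\le C(\didtrans{\gamma}+\didtransjac{\gamma})\le 2C\bitransfull{\gammaone}{\gammatwo}^{1/2}$, while the last term is controlled by \eqref{eq:bd-w-v-approx-2} as $(\ntdis{\gamma}+r)\epsilon r^m\le(\bitransfull{\gammaone}{\gammatwo}^{1/2}+r)\epsilon r^m$. As $\int_U\norm{\tilde q}\le\epsilon r^m$ for small $r$ by \eqref{eq:bd-w-v-approx-1}, all corrections assemble into the admissible remainder $(\bitransfull{\gammaone}{\gammatwo}^{1/2}+r)\epsilon r^m$ of \eqref{eq:partial-comp}.

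The hard part is exactly this first-order bookkeeping. A naive triangle-inequality bound that only exploits the smallness of $\tilde q$ through \eqref{eq:bd-w-v-approx-1} would leave a remainder of the form $C\epsilon r^m$ with a \emph{fixed} constant, which does not fit inside the required $(\bitransfull{\gammaone}{\gammatwo}^{1/2}+r)\epsilon r^m$; extracting the vanishing factor $\bitransfull{\gammaone}{\gammatwo}^{1/2}$ forces one to push the shift field forward as well and to call on \eqref{eq:bd-w-v-approx-2}. It is precisely here that the two hypotheses enter: BV-differentiability ($x_0\in D_w$) is what makes \eqref{eq:bd-w-v-approx-2} available, whereas the non-jumping condition $P_{z_\Gamma}^\perp(w^+-w^-)=0$ ($x_0\in O_w^\Gamma$) is what lets Lemma \ref{lemma:bd-w-v-approx} produce a \emph{continuous} $v$ with $x_0\not\in S_v$, so that the traces of $u-v$ agree with those of $u$ and the reduction to $\TV$-type data is lossless. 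The comparatively routine points are the reduction of $\Gamma$ to a $C^1$ graph and the verification that the pushforward identity and the Jacobian estimates hold throughout the admissible class $\LipClass(\Omega,U,z_\Gamma)$.
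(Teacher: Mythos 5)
Your argument reproduces the paper's central mechanism faithfully: you keep the optimal $w$ fixed so that the second-order term $\beta\norm{Ew}_{F,\EWspace}$ cancels identically in the double-Lipschitz difference, you obtain $v$ from Lemma \ref{lemma:bd-w-v-approx} after reducing $\Gamma$ to $C^1$ graphs (the paper does this via the Whitney extension theorem and Lusin's theorem applied to $f_\Gamma$), and your three correction bounds correspond exactly to the paper's: the $(\lipjac{\gamma}-I)$ term matches \eqref{eq:tgv-dlip-2}, the Jacobian mismatch matches the $(\didtransjac{\gamma}+1)$ bookkeeping, the shift term is \eqref{eq:tgv2-w-v-gamma-approx}, and the absorption of all constants into $\bitransfull{\gammaone}{\gammatwo}^{1/2}$ via the standing bound $\bitransfull{\gammaone}{\gammatwo}<1$ is precisely the paper's closing move. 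Where you genuinely diverge is the passage to general $u\in\BVspace(\Omega)$. The paper quotes from Part 1 only the pointwise chain rule $\pf\gamma\grad u=\grad\inv\gamma\,\pf\gamma\grad u$, valid for $u\in W^{1,1}(\Omega)$; it therefore proves \eqref{eq:g-dlip} for $W^{1,1}$ functions first and then extends by the strict approximation of Corollary \ref{corollary:w-smooth-approx} (with $\norm{D(u^i-v)}_{2,\Meas(\Omega;\R^m)}\to\norm{D(u-v)}_{2,\Meas(\Omega;\R^m)}$), weak* lower semicontinuity of $G(\cdot,w)$, identification of the weak* limits of $\partpf{\gamma}{u^i}{v}$, and intersection over open $U'\supset U$ with $\abs{Du}(\BD U')=0$ to reach $\closure U$ — this extension is the raison d'être of Section \ref{sec:tgv-smooth-approx}. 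You instead assert the measure-level identity $D(\pf\gamma q)=\pf\gamma(\lipjac{\gamma}\,Dq)$ for $q\in\BVspace(\Omega)$ and attribute it to Part 1. The identity is in fact true ($\lipjac{\gamma}$ is, up to transposition, the cofactor matrix of $\grad\gamma$, and the formula is the BV change of variables under bi-Lipschitz homeomorphisms with the Lusin $N$-property), and granting it your route is shorter, bypassing the smoothing machinery entirely; but it is not what Part 1 supplies, so as written this step carries a gap: you must either prove the BV-level identity separately (checking it on the absolutely continuous, jump and Cantor parts, or deriving it by strict approximation — which quietly reintroduces the paper's argument) or retreat to the paper's two-step route.

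One smaller repair in branch (i): $x\in\Omega\setminus S_w$ alone does not unlock the pushforward bound \eqref{eq:bd-w-v-approx-2} — the unconstrained clause of Lemma \ref{lemma:bd-w-v-approx} still requires $w$ to be BV-differentiable at $x$. You should intersect with $D_w$, which is harmless since every $w\in\BDspace(\Omega)$ is BV-differentiable at $\L^m$-a.e.\ point; the paper works on $Q=D_w\setminus S_w$ for exactly this reason. Your side remark that $\abs{D(u-v)}(\closure U)=O(r^m)$ there is true but not needed for \eqref{eq:partial-comp}.
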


The basic idea of the proof is similar to the proof double-Lipschitz
comparability of $\TV$ in Part 1, but we need to deal with $w$ as well. 
This adds significant extra complications. One of them is the
use of the symmetrised gradient $Ew$, which does not allow us to use
estimates of the type in Lemma \ref{lemma:f-transform-estim}. 
We need the BV-differentiability of Section \ref{sec:bdapprox}
here. Also, the variable $w$ alone is problematic in the expression
$D\pf\gamma u - w$ for the use of the area formula. 
In order to deal with it, we have to take something, $\grad v$, 
out $w$, and shift this into $u$. Finally, we need to be careful 
with the jump set of $w$, also removing it from some estimates.

\begin{proof}
    We know from Lemma \ref{lemma:norm-equiv-tgv2} that  $\TGV^2_\alphavec$ 
    is lower semi-continuous with respect to weak* convergence in $\BVUspace$, 
    and that \eqref{eq:r-tv-bound} holds. 
    It therefore only remains to prove Assumption \ref{ass:comp}\ref{item:ass:comp-omega}
    and \ref{item:tgv2:comp-gamma}, that is partial double-Lipschitz comparability $\L^m$-\ae,
    and, for any given Lipschitz graph $\Gamma$, in the direction
    $z_\Gamma$ at $\H^{m-1}$-\ae $x \in \Gamma \isect D_w \isect O_w^\Gamma$.
    
    We pick arbitrary $w \in \BDspace(\Omega)$ achieving the minimum 
    in \eqref{eq:tgv2-scalar-cascade}.
    Regarding Assumption \ref{ass:comp}\ref{item:ass:comp-omega},
    we first of all observe that $\L^m(\Omega \setminus Q)=0$
    for $Q \defeq D_w \setminus S_w$. We claim that
    $\TGV^2_\alphavec$ is partially double-Lipschitz comparable
    for $u$ at every $x \in Q$. Regarding \ref{item:tgv2:comp-gamma},
    in order to apply Lemma \ref{lemma:bd-w-v-approx}, we need
    a $C^1$ $(m-1)$-graph.
    Indeed, as a consequence of the Whitney extension 
    theorem \cite[3.1.14]{federer1969gmt} and Lusin's theorem applied 
    to $f_\Gamma$, we may cover $\Gamma$ by $C^1$ $(m-1)$-graphs 
    $\{\Lambda_i\}_{i=1}^\infty$ satisfying $z_{\Lambda_i}=z_\Gamma$, 
    and $\H^{m-1}(\Gamma \setminus \Union_{i=1}^\infty \Lambda_i)=0$.
    If we show that $\TGV_\alphavec^2$ is partially double-Lipschitz
    comparable for $u$ in the direction $z_\Gamma$ at $\H^{m-1}$-\ae
    $x \in \Lambda_i \isect D_w \isect O_w^\Gamma$, for every $i \in \Z^+$, the claim will follow.
    
    To show Assumption \ref{ass:comp}\ref{item:ass:comp-omega},
    we apply Lemma \ref{lemma:bd-w-v-approx} at a point $x \in Q$. 
    To show \ref{item:tgv2:comp-gamma}, we apply the lemma at a point
    $x \in \Lambda_i \isect D_w \isect O_w^\Gamma$, ($i \in \Z^+$), 
    where the traces $w^\pm(x)$ from both sides of $\Lambda_i$ exist
    and $P_{z_\Gamma}^\perp(w^+(x)-w^-(x))=0$.
    This set, which we denote $Q_i$, satisfies 
    $\H^{m-1}((\Lambda_i \isect D_w \isect O_w^\Gamma) \setminus Q_i)=0$ 
    for each $i \in \Z^+$. This is exactly what we need. 
    
    We fix $x$ and let $U \subset \B(x, r)$ for suitable $r>0$.
    Lemma \ref{lemma:bd-w-v-approx} then gives us
    $v \in W^{1,1}(\Omega)$ with $x \not \in S_v$, and for each
    $\epsilon>0$ for $0 < r < r_\epsilon$  the estimates
    \begin{equation}
        \label{eq:tgv2-w-v-approx}
        \int_U \norm{\grad v-w} \d y \le \epsilon r^m,
    \end{equation}
    and
    \begin{equation}
        \label{eq:tgv2-w-v-gamma-approx}
        \int_{U} \norm{\pf\gamma(w-\grad v) - (w - \grad v)} \d y
        \le (\ntdis{\gamma} +r)\epsilon r^m/2.
    \end{equation}
    
    We define
    \[
        u_\gamma \defeq \partpf{\gamma}{u}{v} = \pf\gamma(u-v)+v,
        \quad
        (\gamma=\gammaone,\gammatwo).
    \]
    If we also set
    \begin{equation}
        \label{eq:def-g-tgv}
        G(u, w) \defeq \alpha \norm{Du-w}_{2,\Meas(\Omega; \R^m)} + \beta \norm{Ew}_{F,\Meas(\Omega; \Sym^2(\R^m)},
    \end{equation}
    then $\TGV^2_{\alphavec}(u') \le G(u', w')$ for all $(u', w') \in \BVspace(\Omega) \times \BDspace(\Omega)$.
    To prove partial double-Lipschitz comparability for $u$ at $x$, it therefore
    suffices to prove for any $\epsilon>0$ the existence of $r_0>0$
    such that for any $0 < r < r_0$, $U \subset \B(x, r)$, and
    $\gammaone,\gammatwo \in \LipClass(\Omega, U)$,
    resp.~ $\gammaone,\gammatwo \in \LipClass(\Omega, U, z_\Gamma)$,
    that
    \begin{equation}
        \label{eq:g-dlip}
        G(u_{\gammaone}, w)
        +
        G(u_{\gammatwo}, w)
        -
        2G(u, w)
        \le
        \bitransfull{\gammaone}{\gammatwo}\abs{(Du-v)}(\closure U)
        +
        (\bitransfull{\gammaone}{\gammatwo}^{1/2} + r)\epsilon r^m,
    \end{equation}
    for $w$ achieving the minimum in \eqref{eq:tgv2-scalar-cascade} for $u$.

    We suppose first that $u \in W^{1,1}(\Omega)$. 
    With $\gamma=\gammaone,\gammatwo$, by a lemma in Part 1, 
    we have $\pf\gamma\grad u = \grad \inv\gamma \pf\gamma \grad u$.
    Thus we may expand
    \[
        \begin{split}
        \grad u_\gamma - w
        &
        = \grad \pf\gamma(u-v) + \grad v -w
        \\
        &
        = \grad\inv\gamma \pf\gamma(\grad u-\grad v) + \pf\gamma(\grad v -w)
            \\ & \phantom{=}
            - [\pf\gamma (\grad v -w) - (\grad v - w)]
        \\
        &
        = \grad\inv\gamma \pf\gamma(\grad u-w) + (I-\grad\inv\gamma)\pf\gamma(\grad v -w)
            \\ & \phantom{=}
            - [\pf\gamma (\grad v -w) - (\grad v - w)].
        \end{split}
    \]
    It follows
    \begin{equation}
        \label{eq:tgv-dlip-1}
        \begin{split}
        \int_U \norm{\grad u_\gamma - w} \d y
        &
        \le
        \int_U \norm{\grad\inv\gamma(\gamma) (\grad u-w)} \jacobianF{m}{\gamma} \d y
            \\ & \phantom{\le}
            + \int_U \norm{(I-\grad\inv\gamma(\gamma)) (\grad v -w)} \jacobianF{m}{\gamma} \d y
            \\ & \phantom{\le}
            + \int_U \norm{\pf\gamma (\grad v -w) - (\grad v - w)} \d y.
        \end{split}
    \end{equation}
    With $\gamma=\gammaone,\gammatwo$,
    using \eqref{eq:tgv2-w-v-approx} we get
    \begin{equation}
        \label{eq:tgv-dlip-2}
        \begin{split}
        \int_U \norm{(I-\grad\inv\gamma(\gamma)) (\grad v -w)} \jacobianF{m}{\gamma} \d y
        &
        \le \didtrans{\gamma} \int_U \norm{\grad v -w} \jacobianF{m}{\gamma} \d y
        \\
        &
        \le
        \didtrans{\gamma} (\didtransjac{\gamma} + 1) \int_U \norm{\grad v -w} \d y
        \\
        &
        \le
        \didtrans{\gamma} (\didtransjac{\gamma} + 1) \epsilon r^m.
        \end{split}
    \end{equation}
    Using \eqref{eq:tgv-dlip-2} and \eqref{eq:tgv2-w-v-gamma-approx}
    in \eqref{eq:tgv-dlip-1}, we see that
    \begin{equation}
        \label{eq:tgv-gamma-approx-1}
        \begin{split}
        \int_U \norm{\grad u_\gamma - w} \d y
        &
        \le
        \int_U \norm{\lipjac{\gamma}(\grad u-w)} \d y
        + \didtrans{\gamma} (\didtransjac{\gamma} + 1) \epsilon r^m
        \\ & \phantom{=}
        + (\ntdis{\gamma} + r)\epsilon r^m/2.
        \end{split}
    \end{equation}
    Also by \eqref{eq:tgv2-w-v-approx}
    \[
        \int_U \norm{\grad u - w} \d y
        \le
        \int_U \norm{\grad u - \grad v} \d y + \epsilon r^m.
    \]
    Summing \eqref{eq:tgv-gamma-approx-1} for
    $\gamma=\gammaone,\gammatwo$, and subtracting $2\int_U \norm{\grad u - w} \d y$,
    we thus obtain
    \begin{equation}
        \label{eq:tgv2-dlip-w11-final}
        \begin{split}
        \int_U \norm{\grad u_\gammaone - w} \d y
        &
        +
        \int_U \norm{\grad u_\gammatwo - w} \d y
        -
        2\int_U \norm{\grad u - w} \d y
        \\
        &
        \le
        \bitrans{\gammaone}{\gammatwo} \int_U \norm{\grad u - \grad v} \d y
        +
        (C_{\gammaone,\gammatwo} +r)\epsilon r^m,
        \end{split}
    \end{equation}
    where
    \[
        C_{\gammaone,\gammatwo} \defeq
        \bitrans{\gammaone}{\gammatwo} +
        \didtrans{\gammaone} (\didtransjac{\gammaone} + 1)
        +
        \didtrans{\gammatwo} (\didtransjac{\gammatwo} + 1)
        +
        \ntdis{\gammaone} + \ntdis{\gammatwo}.
    \]
    Under the assumption $\bitransfull{\gammaone}{\gammatwo} < 1$
    contained in Definition \ref{def:lipschitz-trans},
    this can be made less than a constant times
    $\bitransfull{\gammaone}{\gammatwo}^{1/2}$.
    Since $\epsilon>0$ was arbitrary, we can get rid of any
    extra constant factors, proving \eqref{eq:g-dlip} 
    if $u \in W^{1,1}(\Omega)$.
    
    For general $u \in \BVspace(\Omega)$ we use an analogous smoothing argument 
    as in Part 1 for $\TV$. Namely, we use Corollary \ref{corollary:w-smooth-approx} 
    to approximate $u$ by a sequence $\{u^i\}_{i=1}^\infty \in C^\infty(\Omega)$ with
    \begin{equation}
        \label{eq:tgv-dlip-approx}
        u^i \to u \text{ in } L^1(\Omega) \quad \text{and} \quad \norm{D(u^i-v)}_{2,\Meas(\Omega; \R^m)} \to \norm{D(u-v)}_{2,\Meas(\Omega; \R^m)},
    \end{equation}
    as well as $Du^i \weaktostar Du$.
    Observe that $\epsilon>0$ in \eqref{eq:tgv2-dlip-w11-final}
    does not depend on $u$ itself, and neither does $r_0>0$ 
    nor the sets $Q$ and $Q_i$, ($i \in \Z^+$).
    Therefore \eqref{eq:tgv2-dlip-w11-final} holds in a uniform sense 
    for the sequence $\{u^i\}_{i=1}^\infty$. In particular
    \begin{equation}
        \label{eq:tgv-dlip-approx-2}
        G(u_\gammaone^i, w)
        +
        G(u_\gammatwo^i, w)
        -2G(u^i, w)
        \le
        \bitrans{\gammaone}{\gammatwo} \abs{D(u^i-v)}(U)
        + c
        \quad
        (i \in \Z^+)
    \end{equation}
    for the small nuisance variable 
    $c \defeq (\bitransfull{\gammaone}{\gammatwo}^{1/2} + r)\epsilon r^m$, 
    independent of $i$.
    Since \eqref{eq:tgv-dlip-approx} bounds he right hand side, we deduce 
    \[
        \TGV^2_\alphavec(u_\gammaone^i)
        +
        \TGV^2_\alphavec(u_\gammatwo^i)
        \le
        G(u_\gammaone^i, w)
        +
        G(u_\gammatwo^i, w)
         < \infty.
    \]
    By the $\BVspace$-coercivity in \eqref{eq:tgv2-equivalence}, we may
    therefore extract a subsequence, unrelabelled, such that both 
    $\{u_\gammaone^i\}_{i=1}^\infty$ and
    $\{u_\gammatwo^i\}_{i=1}^\infty$
    are convergent weakly* to some $\uone \in \BVspace(\Omega)$
    and $\utwo \in \BVspace(\Omega)$, respectively.
    Moreover, by \eqref{eq:tgv-dlip-approx}, \eqref{eq:tgv-dlip-approx-2},
    and the lower semicontinuity of the Radon norm with respect to weak* convergence, 
    we find that
    \begin{equation}
        \notag
        G(\uone, w)
        +
        G(\utwo, w)
        - 2 G(u, w)
        \le \liminf_{i \to \infty} \bitrans{\gammaone}{\gammatwo} \abs{D(u^i-v)}(U) + c.
    \end{equation}
    Let us pick an open set $U' \supset U$ such that $\abs{Du}(\BD U')=0$.
    Then $\abs{D(u^i -v)}(U') \to \abs{D(u -v)}(U')$ because $D(u^i -v) \to D(u -v)$ 
    strictly in $\Meas(\Omega; \R^m)$; see \cite[Proposition 1.62]{ambrosio2000fbv}. 
    It follows
    \[
        G(\uone, w)
        +
        G(\utwo, w)
        - 2 G(u, w)
        \le
        \bitrans{\gammaone}{\gammatwo} \abs{D(u -v)}(U') + c.
    \]
    By taking the intersection over all admissible $U' \supset U$,
    we deduce
    \begin{equation}
        \label{eq:tgv-almost-double-lip}
        G(\uone, w)
        +
        G(\utwo, w)
        - 2 G(u, w)
        \le
        \bitrans{\gammaone}{\gammatwo} \abs{D(u -v)}(\closure U) + c.
    \end{equation}
    This is almost \eqref{eq:g-dlip}  just have to
    show that $\uone = \partpf{\gammaone}{u}{v}$ and $\utwo = \partpf{\gammatwo}{u}{v}$. 
    Indeed
    \begin{equation}
        \label{eq:tgv-final-smooth-estim}
        \begin{split}
        \int_\Omega \abs{\uone(x) - \partpf{\gammaone}{u}{v}} \d x
        &
        \le
        \int_\Omega \abs{\uone(x) - \partpf{\gammaone}{u^i}{v}} \d x
        +
        \int_\Omega \abs{\partpf{\gammaone}{u}{v} - \partpf{\gammaone}{u^i}{v}} \d x
        \\
        &
        \le
        \int_\Omega \abs{\uone(x) - \partpf{\gammaone}{u^i}{v}} \d x
        +
        C \int_\Omega \abs{u(x) - u^i(x)} \d x
        \end{split}
    \end{equation}
    for
    \[
        C \defeq \left(\sup_x \jacobianf{m}{\gammaone}{x} \right)
        \le (\lip \gammaone)^m < \infty.
    \]
    The integrals on the right hand side of \eqref{eq:tgv-final-smooth-estim} 
    moreover tend to zero by the strict convergence of $u^i$ to $u$ and the 
    weak* convergence of $\pf\gammaone u^i$ to $\uone$. It follows that
    $\uone = \pf\gammaone u$. Analogously we show that $\utwo = \pf\gammatwo u$.
    The bound \eqref{eq:g-dlip} is now immediate from \eqref{eq:tgv-almost-double-lip}.
\end{proof}

\begin{remark}
    Since $w$ is kept fixed throughout,
    the proof of Proposition \ref{proposition:tgv2-admissibility}
    trivially extends to the differentiation cascade formulation
    of $\TGV^k$, ($k \ge 3$), defined for the parameter 
    vector $\alphavec=(\alpha_1,\ldots,\alpha_k) > 0$ as
    \[
        \TGV^k_\alphavec(u)=\inf_{
              \substack{
                u_\ell \in L^1(\Omega; \Sym^\ell(\R^m)); \\
                \ell=1,\ldots,k-1;\, u_0=u,\, u_k=0
              }
            }
            \sum_{\ell=1}^k \alpha_{k-\ell} \norm{E u_{\ell-1} - u_\ell}.
    \]
    The extension of the proof of this formulation in \cite{sampta2011tgv} for $k=2$
    to $k>2$ may be found in \cite{bredies2013regularization}.
\end{remark}

\subsection{Variants of $\TGV^2$}
\label{sec:tgv2var}

As we have seen, we are unable to prove jump set containment
for $\TGV^2$ unless we assume that the minimising $w \in \BDspace(\Omega)$
in \eqref{eq:tgv2-scalar-cascade} actually satisfies $w \in \BVspace_\loc(\Omega)$
and $P_{z_\Gamma}^\perp(w^+(x)-w^-(x))=0$ for any Lipschitz graph $\Gamma$.
Of course, we also have to assume that $u \in L^\infty_\loc(\Omega)$.
Whether we can prove any of these properties, we leave as
a fascinating question for future studies. Here we consider a couple
of variants of $\TGV^2$ for which at least $w \in \BVspace_\loc(\Omega)$,
and even $P_{z_\Gamma}^\perp(w^+(x)-w^-(x))=0$, which we recall having
denoted by $x \in O_w^\Gamma$.

The first modification, already considered in \cite{bredies2009tgv},
is the non-symmetric variant, which may be defined as
\begin{equation}
    \label{eq:tgv2ns-scalar-cascade}
    \TGVns^2_{\alphavec}(u) \defeq
        \min_{w \in \BVWspace}
        \alpha \norm{D u - w}_{F,\EUspace}
        +\beta \norm{D w}_{F,\DWspace}.
\end{equation}
It is not difficult to extend Lemma \ref{lemma:norm-equiv-tgv2}
to this this functional, and then repeat the proof of Proposition
\ref{proposition:tgv2-admissibility} to obtain the following.

\begin{proposition}
    Let $\Omega \subset \R^m$ be a bounded domain with Lipschitz boundary.
    Then $\TGVns^2_\alphavec$ is an admissible regularisation functional
    on $\BVUspace$ satisfying Assumption \ref{ass:comp}\ref{item:ass:comp-omega}.
    Moreover, for any $u \in \BVspace(\Omega)$, a minimiser $w \in \BDspace(\Omega)$
    of \eqref{eq:tgv2ns-scalar-cascade}, and any Lipschitz 
    $(m-1)$-graph $\Gamma \subset \Omega$, the following holds.
    \begin{enumroman}
        \renewcommand{\theenumi}{(ii'')}
        \renewcommand{\labelenumi}{(ii'')}
        \item
        \label{item:tgv2ns:comp-gamma}
        $\TGVns_\alphavec^2$ is partially double-Lipschitz comparable
        for $u$ in the direction $z_\Gamma$ at $\H^{m-1}$-\ae $x \in O_w^{\Gamma}$.
    \end{enumroman}
\end{proposition}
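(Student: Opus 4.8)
The plan is to mirror the proof of Proposition \ref{proposition:tgv2-admissibility} almost verbatim, exploiting the fact that in \eqref{eq:tgv2ns-scalar-cascade} the second-order penalty $\beta\norm{Dw}_{F,\DWspace}$ forces the minimiser $w$ to lie in $\BVspace(\Omega;\R^m)$ rather than merely in $\BDspace(\Omega)$. First I would establish admissibility. Convexity is immediate, and lower semicontinuity with respect to weak* convergence in $\BVUspace$ follows exactly as for $\TGV^2$; the only genuine point is the coercivity bound \eqref{eq:r-tv-bound}, which one obtains by extending Lemma \ref{lemma:norm-equiv-tgv2}. Here the non-symmetric version is in fact easier than the symmetric one: from the minimiser one has $\norm{Du}_{F,\EUspace}\le\norm{Du-w}_{F,\EUspace}+\norm{w}_{F,1}$, and $\norm{w}_{F,1}$ is controlled by $\norm{Dw}_{F,\DWspace}$ through the Poincaré inequality in $\BVspace(\Omega;\R^m)$, the kernel of $D$ being only the constants. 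No appeal to Korn's or the Sobolev-Korn inequality is needed, which is exactly why the non-symmetric term is more tractable.

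The key structural observation is that the minimiser $w$ belongs to $\BVspace_\loc(\Omega;\R^m)$, and hence, by the remark following the definition of BV-differentiability, $w$ is BV-differentiable at \emph{every} point of $\Omega$. Consequently $D_w=\Omega$, and the containment set $\Gamma\isect D_w\isect O_w^\Gamma$ appearing in Proposition \ref{proposition:tgv2-admissibility} collapses to $O_w^\Gamma$. This is precisely the improvement claimed in \ref{item:tgv2ns:comp-gamma}. The remaining restriction to $O_w^\Gamma$, i.e.\ to points where $P_{z_\Gamma}^\perp(w^+(x)-w^-(x))=0$, cannot be dropped, since it is the hypothesis under which Lemma \ref{lemma:bd-w-v-approx} constructs a function $v\in W^{1,1}_\loc(\Omega)$ with $x\notin S_v$ whose gradient $\grad v$ approximates $w$.

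For the main estimate I would set $G(u,w)\defeq\alpha\norm{Du-w}_{F,\EUspace}+\beta\norm{Dw}_{F,\DWspace}$ and keep $w$ fixed throughout, exactly as before. Two simplifications then make the rest routine. First, because $w$ is held fixed, the second-order term $\beta\norm{Dw}_{F,\DWspace}$ is a constant and cancels identically in the difference $G(\partpf{\gammaone}{u}{v},w)+G(\partpf{\gammatwo}{u}{v},w)-2G(u,w)$, so the entire double-Lipschitz estimate concerns only the first-order term. Second, on vector-valued measures the Frobenius norm coincides with the Euclidean norm, so $\norm{Du-w}_{F,\EUspace}=\norm{Du-w}_{2,\EUspace}$ and this first-order term is literally identical to that of standard $\TGV^2$. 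Hence the expansion of $\grad u_\gamma-w$, the estimates in the $W^{1,1}(\Omega)$ case, and the smoothing argument through Corollary \ref{corollary:w-smooth-approx} all transfer without change, after invoking Lemma \ref{lemma:bd-w-v-approx} at a point $x\in O_w^\Gamma$ (on one of the $C^1$ subgraphs $\Lambda_i$ covering $\Gamma$) to produce $v$ together with the two approximation bounds for $\grad v-w$.

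I therefore expect no serious obstacle: the only part requiring any new, though entirely standard, work is the coercivity half of the extended norm-equivalence lemma, and this is exactly the place where the $L^1$-versus-$L^p$ difficulty motivating the section would normally appear; for the non-symmetric functional it does not, since $Dw$ is a genuine gradient and Poincaré in $\BVspace(\Omega;\R^m)$ suffices in place of Korn. Once $w\in\BVspace_\loc(\Omega;\R^m)$ is recorded and the cancellation of the second-order term is observed, \ref{item:tgv2ns:comp-gamma} follows by repeating the proof of Proposition \ref{proposition:tgv2-admissibility} with $D_w$ replaced throughout by $\Omega$.
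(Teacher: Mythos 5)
Your overall route coincides with the paper's: the paper's proof of this proposition is literally ``extend Lemma \ref{lemma:norm-equiv-tgv2} and repeat the proof of Proposition \ref{proposition:tgv2-admissibility}'', and your structural observations are the correct ones that make this work. In particular: the minimiser of \eqref{eq:tgv2ns-scalar-cascade} lies in $\BVWspace$ by the very definition of the minimisation domain, hence is BV-differentiable at \emph{every} point, so $D_w=\Omega$ and the set $\Gamma\isect D_w\isect O_w^\Gamma$ of Proposition \ref{proposition:tgv2-admissibility}\ref{item:tgv2:comp-gamma} collapses to $O_w^\Gamma$ (the paper records exactly this in the proof of Proposition \ref{prop:f-psi}: ``$w$ is BV-differentiable everywhere, $D_w=\Omega$''); since $w$ is held fixed, the second-order term is constant and cancels in the double-Lipschitz difference; and on vector-valued measures $\norm{\cdot}_F=\norm{\cdot}_2$, so the first-order term and hence the whole estimate, including the smoothing step via Corollary \ref{corollary:w-smooth-approx}, transfer verbatim. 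You are also right that the restriction to $O_w^\Gamma$ cannot be dropped, as it is the hypothesis of Lemma \ref{lemma:bd-w-v-approx}.

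There is, however, a genuine error in your coercivity step. You claim that $\norm{w}_{F,1}$ is controlled by $\norm{Dw}_{F,\DWspace}$ via the Poincaré inequality, ``the kernel of $D$ being only the constants''. The kernel is the obstruction, not a help: $w\equiv c$ with $\norm{c}$ arbitrarily large has $Dw=0$, so no inequality of the form $\norm{w}_{F,1}\le C\norm{Dw}_{F,\DWspace}$ can hold. Poincaré only bounds the mean-zero part $\norm{w-\bar w}_{2,L^1(\Omega;\R^m)}$, and the constant component $\bar w$ must be absorbed elsewhere. The correct statement, which is what the extension of Lemma \ref{lemma:norm-equiv-tgv2} actually delivers and what the paper invokes in the proof of Proposition \ref{prop:f-psi}, is
\[
    \norm{Du}_{F,\EUspace}
    \le C\bigl(\norm{Du-w}_{F,\EUspace}+\norm{Dw}_{F,\DWspace}+\norm{u}_{\Uspace}\bigr),
\]
where the $\norm{u}_{\Uspace}$ term, permitted by \eqref{eq:r-tv-bound}, is essential for controlling $\bar w$; its proof needs Poincaré \emph{plus} the compactness/contradiction argument of \cite{sampta2011tgv,l1tgv}, not Poincaré alone. (You are right that Korn plays no role here, but that is not what the contradiction argument is for.) With this repair your proof is complete; alternatively, observe that $\TGVns^2_\alphavec=F_\Psi$ for $\Psi(w)=\norm{Dw}_{F,\DWspace}$ (extended by $\infty$ off $\BVWspace$), which satisfies \eqref{eq:psi-coercive} trivially with $C=1$, so Proposition \ref{prop:f-psi} yields the claim directly.
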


In fact, since the proof keeps $w$ fixed, we can do 
a little bit more.

\begin{proposition}
    \label{prop:f-psi}
    Let $\Omega \subset \R^m$ be a bounded domain with Lipschitz boundary.
    Suppose $\Psi: \BVspace(\Omega) \to \R$ is convex and lower semicontinuous 
    with respect to weak* convergence in $\BVWspace$, and satisfies for 
    some constant $C>0$ the  inequality
    \begin{equation}
        \label{eq:psi-coercive}
        \norm{Dw}_{F,\DWspace} \le C (1+ \Psi(w)).
    \end{equation}
    For any $\alphavec=(\beta,\alpha)>0$, define
    \begin{equation}
        \label{eq:f-psi-def}
        F_\Psi(u) \defeq
            \inf_{w \in L^1(\Omega; \R^m)}
            \alpha \norm{D u - w}_{F,\EUspace}
            +\beta \Psi(w),
        \quad (u \in \BVspace(\Omega)).
    \end{equation}
    Then $F_\Psi$ is an admissible regularisation functional
    on $\Uspace$ and satisfies Assumption \ref{ass:comp}\ref{item:ass:comp-omega}
    and \ref{item:tgv2ns:comp-gamma}.
\end{proposition}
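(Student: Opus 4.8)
The plan is to follow the proof of Proposition \ref{proposition:tgv2-admissibility} almost verbatim, exploiting the fact, emphasised in the surrounding text, that the double-Lipschitz estimate only ever touches the first term $\alpha\norm{Du-w}$ while $w$ is held fixed. The one genuine gain over the standard $\TGV^2$ case is that the coercivity \eqref{eq:psi-coercive} forces any minimising $w$ to lie in $\BVspace(\Omega;\R^m)$ rather than merely in $\BDspace(\Omega)$; by the remark following the definition of BV-differentiability this makes $w$ BV-differentiable at \emph{every} point, which is exactly what lets us drop the set $D_w$ from the conclusion and assert comparability on all of $O_w^\Gamma$, matching the nonsymmetric $\TGV^2$ statement.

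First I would dispose of admissibility. Convexity of $F_\Psi$ is immediate, since $(u,w)\mapsto \alpha\norm{Du-w}_{F,\EUspace}+\beta\Psi(w)$ is jointly convex ($Du-w$ is affine in $(u,w)$ and $\Psi$ is convex) and partial infimisation over $w$ preserves convexity. For the total-variation bound \eqref{eq:r-tv-bound} I would establish a norm equivalence in the spirit of Lemma \ref{lemma:norm-equiv-tgv2}: given $u$, picking a near-optimal $w$, the coercivity \eqref{eq:psi-coercive} bounds $\norm{Dw}_{F,\DWspace}$ by a multiple of $1+F_\Psi(u)$, Poincaré's inequality then controls $w$ modulo its mean, the mean together with $\norm{Du-w}$ is absorbed into $\norm{u}_{\Uspace}$ and $F_\Psi(u)$ exactly as in the extension of Lemma \ref{lemma:norm-equiv-tgv2} to nonsymmetric $\TGV^2$, and $\norm{Du}\le\norm{Du-w}+\norm{w}$ closes the estimate. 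Lower semicontinuity with respect to weak* convergence in $\BVspace$ follows by the direct method: along a sequence realising the liminf, coercivity plus boundedness of $\norm{Du^i-w^i}$ bounds $w^i$ in $\BVspace(\Omega;\R^m)$, so after extraction $w^i\to w$ strongly in $L^1$ and $Du^i\weaktostar Du$, whence lower semicontinuity of $\norm{\freevar}_{F,\EUspace}$ and of $\Psi$ gives the claim. The same compactness argument for a single fixed $u$ yields a minimiser $w\in\BVspace(\Omega;\R^m)$ of \eqref{eq:f-psi-def}, which defines $O_w^\Gamma$.

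With $w$ fixed I would set $G(u',w)\defeq\alpha\norm{Du'-w}_{F,\EUspace}+\beta\Psi(w)$, so that $F_\Psi(u')\le G(u',w)$ for every $u'$ with equality at $u'=u$. Applying this to the partial pushforwards $u_\gamma\defeq\partpf{\gamma}{u}{v}$ and summing, the $\beta\Psi(w)$ contributions cancel (coefficients $1,1,-2$), leaving
\[
    F_\Psi(u_{\gammaone})+F_\Psi(u_{\gammatwo})-2F_\Psi(u)
    \le
    \alpha\bigl(\norm{Du_{\gammaone}-w}+\norm{Du_{\gammatwo}-w}-2\norm{Du-w}\bigr).
\]
The right-hand side is precisely the quantity estimated in the proof of Proposition \ref{proposition:tgv2-admissibility} (for order-one tensors $\norm{\freevar}_F=\norm{\freevar}_2$, so the first terms coincide). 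I would therefore invoke Lemma \ref{lemma:bd-w-v-approx} to produce $v\in W^{1,1}(\Omega)$ with $x\not\in S_v$ and the two approximation estimates, and then reproduce the chain leading to \eqref{eq:g-dlip} — expansion via $\pf\gamma\grad u=\grad\inv\gamma\,\pf\gamma\grad u$, the bounds on $(I-\grad\inv\gamma)(\grad v-w)$ and on the shifted $\grad v-w$, and the final smoothing step through Corollary \ref{corollary:w-smooth-approx} — entirely unchanged, since none of it involves the second-order term $\Psi$. For Assumption \ref{ass:comp}\ref{item:ass:comp-omega} I apply the lemma at $x\in\Omega\setminus S_w$ (the unrestricted case, valid $\L^m$-a.e.\ since $\L^m(S_w)=0$), and for \ref{item:tgv2ns:comp-gamma} at $x\in O_w^\Gamma$ after covering $\Gamma$ by $C^1$ graphs via Whitney and Lusin as in Proposition \ref{proposition:tgv2-admissibility}; the BV-differentiability of $w$ required by the lemma is automatic because $w\in\BVspace_\loc(\Omega;\R^m)$.

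The main obstacle is not the Lipschitz estimate, which is a transcription, but the two preliminary facts that make the transcription legitimate: existence of a genuine minimiser $w$ (so that $O_w^\Gamma$ is well-defined and the $\beta\Psi(w)$ terms cancel exactly) and the norm equivalence delivering both \eqref{eq:r-tv-bound} and the $\BVspace$-coercivity used in the smoothing step. Both rest entirely on the coercivity hypothesis \eqref{eq:psi-coercive}: once $\Psi$ controls the full gradient $\norm{Dw}_{F,\DWspace}$, every place where the $\TGV^2$ argument relied on a minimiser living merely in $\BDspace(\Omega)$ is upgraded to $\BVspace(\Omega;\R^m)$, and the proof goes through with the stated, slightly stronger, conclusion.
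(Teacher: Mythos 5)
Your proposal is correct and takes essentially the same route as the paper's proof: admissibility is obtained from the coercivity \eqref{eq:psi-coercive} (existence of a minimising $w \in \BVWspace$, weak* lower semicontinuity via compactness of the $w^i$ and joint lower semicontinuity of $G$, and the Poincaré-based estimate for \eqref{eq:r-tv-bound}), after which the proof of Proposition \ref{proposition:tgv2-admissibility} is transcribed with $w$ fixed, the point being exactly that $w \in \BVspace(\Omega; \R^m)$ makes $w$ BV-differentiable everywhere, $D_w = \Omega$, so the $\TGV^2$ complication disappears. The details you make explicit --- cancellation of the $\beta\Psi(w)$ terms under the coefficients $1,1,-2$, and the Whitney--Lusin covering by $C^1$ graphs --- are precisely the adaptations the paper leaves implicit in its instruction to ``adapt the proof''.
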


\begin{proof}
    Minding \eqref{eq:psi-coercive}, it is not difficult to see 
    that a minimising sequence $\{w^i\}_{i=1}^\infty$ for
    the expression of $F_\Psi(u)$ in \eqref{eq:f-psi-def}
    is bounded in $\BVspace(\Omega)$.
    The existence of a minimising $w \in \BVWspace$ for $F_\Psi(u)$
    therefore follows from the lower semicontinuity of $\Psi$.
    If now $u^i \to u$ weakly* in $\BVspace(\Omega)$,
    with corresponding minimisers $w^i$ to the expression
    of $F_\Psi(u^i)$ in \eqref{eq:f-psi-def},
    then we may again deduce that $\{w^i\}_{i=1}^\infty$
    is bounded in $\BVWspace$. Therefore, we may extract
    a subsequence, unrelabelled, such that also $\{w^i\}_{i=1}^\infty$
    converge weakly* to some $u \in \BVUspace$. But the functional
    \begin{equation}
        \label{eq:def-g-psi}
        G(u, w) \defeq
        \alpha \norm{D u - w}_{F,\EUspace}
        +\beta \Psi(w),
    \end{equation}
    is clearly lower semicontinuous with respect to weak*
    convergence of both variables. Since $F_\Psi(u) \le G(u, w)$,
    and $F_\Psi(u^i)=G(u^i, w^i)$, we deduce that $F_\Psi$ is weak*
    lower semicontinuous. 
    
    To see the coercivity property \eqref{eq:r-tv-bound}, we
    use the fact that
    \[
        \norm{D u}_{F,\EUspace} 
        \le
        C\bigl(
        \norm{D u - w}_{F,\EUspace} 
        +
        \norm{D w}_{F,\EWspace} 
        +\norm{u}_{\Uspace}
        \bigl).
    \]
    This follows from the Poincaré inequality and an argument 
    by contradiction; for details see \cite{sampta2011tgv,l1tgv}.
    Plugging in \eqref{eq:psi-coercive} immediately
    proves \eqref{eq:r-tv-bound}.
    
    Finally, $F_\Psi$ is clearly convex, so the above considerations
    show that it is admissible. To prove Assumption \ref{ass:comp}, we
    adapt the proof of Proposition \ref{proposition:tgv2-admissibility},
    replacing $G$ defined by \eqref{eq:def-g-tgv} by that in \eqref{eq:def-g-psi}.
    Now $w$ is BV-differentiable everywhere, $D_w=\Omega$, so this
    part of the complications with $\TGV^2$ does not arise.
\end{proof}

As we recall from Korn's inequality, functions with bounded symmetrised
gradient in $L^q$ for $q>1$ are much better behaved than for $q=1$. 
We now want to exploit this to define variants of $\TGV^2$ with
stronger double-Lipschitz comparability properties.

\begin{corollary}
    \label{corollary:psi-lq}
    Suppose $\Omega \subset \R^m$ is a bounded open set with Lipschitz boundary.
    For $1 < q < \infty$, let
    \[
        \Psi(w) \defeq 
        \begin{cases}
            \norm{\Eabs w}_{F,L^q(\Omega; \Sym^2(\R^m))}, &
            w \in W_0^{1,q}(\Omega; \R^m), \\
            \infty, & \text{otherwise}.
        \end{cases}
    \]
    Then $\TGV_{\alphavec,0}^{2,q} \defeq F_\Psi$ is an admissible regularisation functional
    on $\Uspace$, satisfying Assumption \ref{ass:comp}.
\end{corollary}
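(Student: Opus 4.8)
The plan is to verify that this particular $\Psi$ satisfies the three hypotheses of Proposition \ref{prop:f-psi}, whence admissibility of $F_\Psi = \TGV_{\alphavec,0}^{2,q}$ and Assumption \ref{ass:comp} follow at once. Those hypotheses are: convexity of $\Psi$; lower semicontinuity of $\Psi$ with respect to weak* convergence in $\BVWspace$; and the coercivity bound $\norm{Dw}_{F,\DWspace} \le C(1+\Psi(w))$.

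Convexity I would dispatch immediately: the map $w \mapsto \Eabs w$ is linear, $\norm{\freevar}_{F,L^q(\Omega;\Sym^2(\R^m))}$ is a norm, and $W_0^{1,q}(\Omega; \R^m)$ is a linear subspace, so $\Psi$ is a convex function plus the indicator of a convex set. For the coercivity bound I would observe that for $w \in W_0^{1,q}(\Omega; \R^m)$ the distributional gradient reduces to $Dw = \grad w\,\L^m$, so that $\norm{Dw}_{F,\DWspace} = \norm{\grad w}_{F,L^1(\Omega)}$. Since $\Omega$ is bounded, Hölder's inequality gives $\norm{\grad w}_{F,L^1} \le \L^m(\Omega)^{1-1/q}\norm{\grad w}_{F,L^q}$, and Korn's inequality \eqref{eq:korn-inequality} bounds $\norm{\grad w}_{F,L^q} \le C_{\Omega,q}^{1/q}\norm{\Eabs w}_{F,L^q} = C_{\Omega,q}^{1/q}\Psi(w)$; chaining these yields $\norm{Dw}_{F,\DWspace} \le C(1+\Psi(w))$, the estimate being vacuous when $\Psi(w) = \infty$.

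The hard part will be the weak* lower semicontinuity in $\BVWspace$, and this is exactly where $q>1$ is indispensable, through Korn's inequality and the reflexivity of $W_0^{1,q}$. I would argue as follows. Suppose $w^i \to w$ weakly* in $\BVWspace$, so in particular $w^i \to w$ strongly in $L^1(\Omega; \R^m)$, and set $L \defeq \liminf_i \Psi(w^i)$. If $L = \infty$ there is nothing to prove, so I would pass to a subsequence (not relabelled) with $\Psi(w^i) \to L$ and $\sup_i \Psi(w^i) < \infty$; then each $w^i \in W_0^{1,q}(\Omega; \R^m)$ with $\sup_i \norm{\Eabs w^i}_{F,L^q} < \infty$. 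Korn's inequality \eqref{eq:korn-inequality} then bounds $\{\grad w^i\}$ in $L^q$, and Poincaré's inequality for $W_0^{1,q}$ bounds $\{w^i\}$ in $L^q$, so $\{w^i\}$ is bounded in the reflexive space $W_0^{1,q}(\Omega; \R^m)$.

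From here I would extract a further subsequence converging weakly in $W_0^{1,q}$ to some $\bar w$, which stays in the weakly closed subspace $W_0^{1,q}(\Omega; \R^m)$. Weak convergence in $W_0^{1,q}$ gives weak convergence in $L^q(\Omega; \R^m)$; testing against $L^\infty(\Omega; \R^m)$ (legitimate since $\Omega$ is bounded) and comparing with the strong $L^1$ limit forces $\bar w = w$, so $w \in W_0^{1,q}(\Omega; \R^m)$. Finally, since $w \mapsto \Eabs w$ is bounded linear into $L^q(\Omega; \Sym^2(\R^m))$, weak lower semicontinuity of the norm gives $\Psi(w) = \norm{\Eabs w}_{F,L^q} \le \liminf_i \norm{\Eabs w^i}_{F,L^q} = L$. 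This establishes the lower semicontinuity, and the proof concludes by invoking Proposition \ref{prop:f-psi}.
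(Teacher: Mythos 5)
Your verification of the three hypotheses of Proposition \ref{prop:f-psi} is correct and essentially matches the paper's: the coercivity bound \eqref{eq:psi-coercive} via Korn's inequality \eqref{eq:korn-inequality}, and weak* lower semicontinuity by bounding the minimising sequence in the reflexive space $W_0^{1,q}(\Omega; \R^m)$, extracting a weak limit, identifying it with the BV-weak* limit, and using that $W_0^{1,q}(\Omega; \R^m)$ is convex and strongly closed, hence weakly closed, in $W^{1,q}(\Omega; \R^m)$. (The paper bounds $\norm{w^i}_{2,L^q(\Omega;\R^m)}$ by the Gagliardo--Nirenberg--Sobolev inequality where you use the Poincar\'e inequality in $W_0^{1,q}$; both work here, and the difference is immaterial.)

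There is, however, a genuine gap in your final step: Proposition \ref{prop:f-psi} does \emph{not} deliver the full Assumption \ref{ass:comp} ``at once''. Its conclusion is admissibility together with Assumption \ref{ass:comp}\ref{item:ass:comp-omega} and the weakened condition \ref{item:tgv2ns:comp-gamma}, i.e.\ partial double-Lipschitz comparability in the direction $z_\Gamma$ only at $\H^{m-1}$-a.e.\ $x \in O_w^\Gamma$ --- the set of points of $\Gamma$ where $P_{z_\Gamma}^\perp(w^+(x)-w^-(x))=0$ --- whereas Assumption \ref{ass:comp}\ref{item:ass:comp-gamma} demands it at $\H^{m-1}$-a.e.\ $x \in \Gamma$. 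To close this gap you need one further observation, which is exactly what the paper supplies: for this $\Psi$ the minimising $w$ lies in $W_0^{1,q}(\Omega; \R^m)$, so $J_w=\emptyset$, and the (equal) one-sided traces of $w$ exist $\H^{m-1}$-a.e.\ on any Lipschitz $(m-1)$-graph $\Gamma$ (by the BV trace theorem, or from $\H^{m-1}(S_w \setminus J_w)=0$); hence $\H^{m-1}(\Gamma \setminus O_w^\Gamma)=0$, which upgrades \ref{item:tgv2ns:comp-gamma} to the full Assumption \ref{ass:comp}\ref{item:ass:comp-gamma}. This is a one-line fix, but it is not cosmetic: bridging the difference between $O_w^\Gamma$ and $\Gamma$ is precisely what the passage from $\TGV^2$ (where this cannot yet be done) to the $L^q$ variant buys, so it is the crux of the corollary rather than an automatic consequence of the proposition.
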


\begin{proof}
    The condition \eqref{eq:psi-coercive} is an immediate
    consequence of Korn's inequality \eqref{eq:korn-inequality}.
    For weak* lower semicontinuity, we have to establish
    that any BV-weak* limit point $w$ of a sequence $\{w^i\}_{i=1}^\infty
    \subset W_0^{1,q}(\Omega; \R^m)$ with
    \begin{equation}
        \label{eq:psi-lq-bnd}
        \sup_i \norm{w}_{2,L^1(\Omega; \R^m)} + \norm{\Eabs w}_{F,L^q(\Omega; \Sym^2(\R^m))}
        \le C < \infty,
    \end{equation}
    also satisfies $w \in W_0^{1,q}(\Omega; \R^m)$. The lower 
    semicontinuity of $\norm{\Eabs \cdot}_{2,L^q(\Omega; \Sym^2(\R^m))}$ 
    itself is standard. By the Gagliardo-Nirenberg-Sobolev inequality,
    Korn's inequality \eqref{eq:korn-inequality}, and 
    approximation in $C_c^\infty(\Omega; \R^m)$, we also discover
    \[
        \norm{w^i}_{2,L^q(\Omega; \R^m)}
        +
        \norm{\grad w^i}_{F,L^q(\Omega; \Tensor^2(\R^m))}
        \le C' \norm{\Eabs w^i }_{F,L^q(\Omega; \Sym^2(\R^m))}
        \le C'C.
    \]
    We may therefore assume $\{w^i\}_{i=1}^\infty$ convergent
    weakly in $W^{1,q}(\Omega; \R^m)$, necessarily to $w$.
    It follows that $w \in W^{1,q}(\Omega; \R^m)$.
    But $w^i \in W_0^{1,q}(\Omega; \R^m)$ and
    $W_0^{1,q}(\Omega; \R^m)$ is strongly closed
    within $W^{1,q}(\Omega; \R^m)$, hence weakly
    closed as a convex set. Therefore $w \in W_0^{1,q}(\Omega; \R^m)$.
    This establishes BV-weak* lower semicontinuity of $\Psi$.
    Finally, we employ Proposition \ref{prop:f-psi}, noting
    that $\H^{m-1}(\Gamma \setminus O_w^\Gamma)=0$
    because $J_w=\emptyset$ and the (equal) one-sided traces
    exist $\H^{m-1}$-\ae on $\Gamma$ (by the BV trace theorem
    or $\H^{m-1}(S_w \setminus J_w)=0$).
\end{proof}

In Figure \ref{fig:q-comparison}, we have a simple comparison
of the effect of the exponent $q$ with fidelity $\phi(t)=t^2/$.
For $q=1$, we have chosen the base parameters $\alpha=25$ and 
$\beta=250$ on the image domain $\Omega \defeq [1, 256]^2$. For other 
values of $q$, namely $q=1.5$ and $q=2$, we have scaled $\beta$ by the
factor $256^{2(q-1)/q}$. This is what the the Cauchy-Schwarz inequality
gives as the factor for the $q$-norm to dominate the $1$-norm on an 
image with $256^2$ pixels.
We also include the $\TV$ result for comparison. The PSNR for
variants of $\TGV^2$ with different $q$ values is always the
same, $29.2$, while $\TV$ has PSNR $28.0$. There is also visually
no discernible difference between the different $q$-values,
whereas $\TV$ clearly exhibits the staircasing effect in the 
background sky. It therefore seems reasonable to also employ
in practise this kind of variants of $\TGV^2$, for which we have 
stronger theoretical results now, only lacking a proof of the
local boundedness of $u$ to complete the proof of the property 
$\H^{m-1}(J_u \setminus J_f)$.

\setlength{\w}{0.33\textwidth}
\def\SQl{148}
\def\SQb{130}
\def\SQr{212}
\def\SQt{194}
\newlength{\scf}
\def\shadowshift{0pt}
\newcommand{\drawzoomarea}{
    \draw[line width=1.5,dashed,color=red] 
        (\SQl\scf, \SQb\scf) rectangle (\SQr\scf, \SQt\scf);
}

\begin{figure}[t]
    \def\igraph#1{}
    \newcommand{\inc}[3][]{
        \setlength{\scf}{0.0039\w} 
        \begin{tikzpicture}
        \pgftext[at=\pgfpoint{0}{0},left,bottom]{
            \includegraphics[width=\w]{{#2}.png}
        }
        \pgftext[at=\pgfpoint{0.55\w}{0.05\w},left,bottom]{%
            \includegraphics[width=0.4\w,bb=148 130 212 194,clip]{{#2#3}.png}
        }
        \draw[line width=1.5, color=red] (0.55\w, 0.05\w) rectangle (0.95\w, 0.45\w);
        #1%
        \end{tikzpicture}
    }
    \begin{subfigure}[b]{\w}%
    \centering%
    \inc[\drawzoomarea]{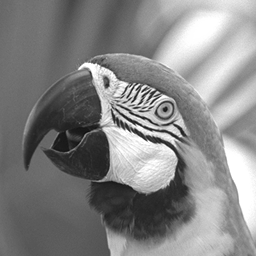}{}%
    \caption{Original}
    \end{subfigure}
    \begin{subfigure}[b]{\w}%
    \centering%
    \inc{tgvptest/output/parrot-noisy}{}%
    \caption{Noisy image}
    \end{subfigure}
    \begin{subfigure}[b]{\w}%
    \centering%
    \inc{tgvptest/output/parrot-tv1,25,0-l2}{}%
    \caption{$\TV$, $\alphavec=25$}%
    \end{subfigure}
    \begin{subfigure}[b]{\w}%
    \centering%
    \inc{tgvptest/output/parrot-tgv21,25,250-l2}{}%
    \caption{$q=1$, $\alphavec=(250,25)$}%
    \end{subfigure}
    \begin{subfigure}[b]{\w}%
    \centering%
    \inc{tgvptest/output/parrot-tgv2p1.5,25,10079-l2}{}%
    \caption{$q=1.5$, $\alphavec=(10079,25)$}%
    \end{subfigure}
    \begin{subfigure}[b]{\w}%
    \centering%
    \inc{tgvptest/output/parrot-tgv2p2.0,25,64000-l2}{}%
    \caption{$q=2.0$, $\alphavec=(64000,25)$}%
    \end{subfigure}
    \caption{Effect of the of exponent $q$ in the norm
        $\norm{\Eabs w}_{F,L^q(\Omega; \Sym^2(\R^m))}$
        in variants of $\TGV^2$ together with fidelity $\phi(t)=t^2/2$.
        The $\beta$ factor has been scaled from the case $q=1$ with 
        the help of the Cauchy-Schwarz inequality.
        There is no discernible difference between the
        results for different $q$, all having PSNR $29.2$,
        while the $\TV$ comparison has PSNR $28.0$
        and exhibits the stair-casing effect in the sky that
        $\TGV^2$ variants do not.
    }
    \label{fig:q-comparison}
\end{figure}

\subsection{Infimal convolution TV}
\label{sec:ictv}

Let $v \in W^{1,1}(\Omega)$ and $\grad v \in \BVWspace$. 
Define the second-order total variation by
\[
    \TV^2(w)=\norm{D \grad w}_{F,\DWspace}.
\]
Then second-order infimal convolution TV of $u \in \BVUspace$,
first introduced in \cite{chambolle97image}, is written 
\begin{equation}
    \label{eq:ictv}
    \ICTV_\alphavec(u) \defeq
    (\alpha\TV \IC \beta\TV^2)(u)
    \defeq
    \inf_{u=v^1+v^2}\bigl(\alpha \TV(v^1) + \beta \TV^2(v^2)\bigr),
\end{equation}
where necessarily $w \in W^{1,1}(\Omega)$,
$\grad w \in \BVWspace$, and $v \in \BVUspace$.
Clearly we have
\begin{equation}
    \label{eq:tgv2-ictv-tv}
    \TGV^2_{\alphavec}(u) \le \TGVns^2_{\alphavec}(u) \le \ICTV_\alphavec(u) \le \alpha \TV(u).
\end{equation}
It has been observed that while $\ICTV$ is better at
avoiding the stair-casing effect than $\TV$, it fares
worse than $\TGV^2$ \cite{benning2011higher}.

We did not find a proof of the weak* lower semi-continuity
of $\ICTV$ in the literature, so we provide one below.
Then we show that $\ICTV_\alphavec$ is admissible and
partially double-Lipschitz comparable as required by
Assumption \ref{ass:comp}.
As already observed in the Introduction, we remark, however, that 
the the jump set containment $\H^{m-1}(J_u \setminus J_f)=0$ can 
be proved for $\ICTV$ using the result for $\TV$.

\begin{lemma}
    \label{lemma:ictv-lsc}
    Let $\Omega \subset \R^m$ be a bounded domain with Lipschitz boundary.
    Then $\ICTV_\alphavec$ is lower semi-continuous with respect to weak* 
    convergence in $\BVUspace$.
\end{lemma}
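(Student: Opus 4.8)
The plan is to run the direct method for the infimal convolution, where the only genuine work is establishing compactness of the two pieces of a near-optimal decomposition. Let $u^i \weaktostar u$ in $\BVUspace$ and set $L \defeq \liminf_{i\to\infty} \ICTV_\alphavec(u^i)$. Passing to a subsequence (not relabelled) along which $\ICTV_\alphavec(u^i) \to L$, I may assume $L < \infty$, since otherwise there is nothing to prove. For each $i$ I would select an admissible near-optimal decomposition $u^i = v^{1,i} + v^{2,i}$, with $v^{1,i} \in \BVspace(\Omega)$ and $v^{2,i} \in W^{1,1}(\Omega)$, $\grad v^{2,i} \in \BVWspace$, satisfying
\[
    \alpha \TV(v^{1,i}) + \beta \TV^2(v^{2,i}) \le \ICTV_\alphavec(u^i) + 1/i.
\]
In particular $\TV(v^{1,i})$ and $\TV^2(v^{2,i}) = \norm{D\grad v^{2,i}}_{F,\DWspace}$ are uniformly bounded.

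The delicate step is upgrading these seminorm bounds into genuine compactness, since a priori only the sum $u^i$ is $L^1$-controlled and the decomposition carries a constant (indeed affine, for $\TV^2$) kernel ambiguity. Since adding a constant to $v^{2,i}$ and subtracting it from $v^{1,i}$ changes neither $\TV(v^{1,i})$ nor $\TV^2(v^{2,i})$, I normalise $\int_\Omega v^{2,i}\,dx = 0$ at no cost. The key observation is that $Dv^{2,i} = Du^i - Dv^{1,i}$, whence
\[
    \norm{\grad v^{2,i}}_{2,L^1(\Omega; \R^m)} = \abs{Dv^{2,i}}(\Omega) \le \abs{Du^i}(\Omega) + \TV(v^{1,i}),
\]
which is bounded because $u^i$ is weak*-convergent, hence $\BVspace$-bounded. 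Thus $\grad v^{2,i}$ is bounded in $\BVWspace$, and by Poincaré--Wirtinger on the bounded Lipschitz domain $\Omega$ together with the zero-mean normalisation, $v^{2,i}$ is bounded in $W^{1,1}(\Omega) \subset \BVspace(\Omega)$.

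Now I extract limits. BV-compactness gives, along a further subsequence, $\grad v^{2,i} \to G$ strongly in $L^1$ with $D\grad v^{2,i} \weaktostar DG$, and $v^{2,i} \to v^2$ strongly in $L^1$ with $Dv^{2,i} \weaktostar Dv^2$. Since $Dv^{2,i} = \grad v^{2,i}\,\L^m \to G\,\L^m$, uniqueness of weak* limits forces $Dv^2 = G\,\L^m$; that is, $v^2 \in W^{1,1}(\Omega)$ with $\grad v^2 = G \in \BVWspace$, so $v^2$ is admissible for the $\TV^2$-piece. As $u^i \to u$ and $v^{2,i} \to v^2$ strongly in $L^1$, also $v^{1,i} \to v^1 \defeq u - v^2$ strongly in $L^1$, and the uniform bound on $\TV(v^{1,i})$ yields $v^{1,i} \weaktostar v^1$ in $\BVspace(\Omega)$. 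Hence $u = v^1 + v^2$ is an admissible decomposition.

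Finally, weak* lower semicontinuity of the total variation (applied to $v^{1,i} \weaktostar v^1$, and to $\grad v^{2,i} \weaktostar \grad v^2$ for the second-order term) gives
\[
    \alpha\TV(v^1) + \beta\TV^2(v^2) \le \liminf_{i\to\infty}\bigl(\alpha\TV(v^{1,i}) + \beta\TV^2(v^{2,i})\bigr) \le L.
\]
Since the left-hand side dominates $\ICTV_\alphavec(u)$ by definition of the infimal convolution, I conclude $\ICTV_\alphavec(u) \le L = \liminf_{i\to\infty} \ICTV_\alphavec(u^i)$, which is the claim. The main obstacle, as indicated, is the compactness of the individual pieces; the identity $Dv^{2,i} = Du^i - Dv^{1,i}$ is precisely what converts the seminorm bounds into a $\BVspace$-bound on $\grad v^{2,i}$, after which normalisation and Poincaré--Wirtinger recover the $L^1$-control needed to pass to the limit in both summands.
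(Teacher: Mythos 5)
Your proof is correct and follows essentially the same route as the paper's: the direct method on near-optimal decompositions $u^i = v^{1,i}+v^{2,i}$, a mean normalisation to kill the kernel ambiguity, the identity $Dv^{2,i}=Du^i-Dv^{1,i}$ combined with Poincar\'e to turn the seminorm bounds into $\BVspace$-bounds, BV compactness, and weak* lower semicontinuity of the Radon norm on each summand. The only difference is cosmetic: you normalise and compactify the $W^{1,1}$-piece $v^{2,i}$ first and recover $v^{1,i}$ by difference, whereas the paper normalises $v_1^i$ and proceeds in the opposite order (your explicit identification $Dv^2 = G\,\L^m$ via uniqueness of weak* limits is, if anything, spelled out more carefully than in the paper).
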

\begin{proof}
    Let $u^i \weaktostar u$ weakly* in $\BVUspace$, ($i=0,1,2,\ldots$).
    We may then without loss of generality assume that
    $\{\norm{u^i}_{\Uspace}+\norm{Du^i}_{F,\EUspace}\}_{i=0}^\infty$
    is bounded. Let $v_1^i \in \BVUspace$ and $v_2^i \in W^{1,1}(\Omega)$ with
    $\grad w^i \in \BVWspace$ be such that
    \[
        \alpha \norm{Dv_1^i}_{F,\EUspace} + \beta \norm{D \grad v_2^i}_{F,\DWspace}
        \le
        \ICTV_\alphavec(u^i) + 1/i,
        \quad
        (i=0,1,2,\ldots).
    \]
    Observe that we may take each $v_1^i$ such that
    \begin{equation}
        \label{eq:ictv-vi-mean-zero}
        \bar v_1^i \defeq \int_\Omega v_1^i(x) \d x = 0,
    \end{equation}
    since the infimum in \eqref{eq:ictv} is independent of the mean of $v^1$
    and $v^2$.
    
    If $\limsup_i \ICTV_\alphavec(u^i) = \infty$, there is nothing
    to prove, so we may assume that $\sup_i \ICTV_\alphavec(u^i) < \infty$.
    It follows that both the sequence
    $\{\norm{D v_1^i}_{F,\EUspace}\}_{i=0}^\infty$ and the sequence
    $\{\norm{D \grad v_2^i}_{F,\EWspace}\}_{i=0}^\infty$ are bounded. 
    Minding \eqref{eq:ictv-vi-mean-zero} and the assumption
    that $\Omega$ has Lipschitz boundary, the Poincaré
    inequality now shows the existence of a constant $C>0$ such that
    \[
        \norm{v_1^i}_{\Uspace}
        =
        \norm{v_1^i-\bar v_1^i}_{\Uspace}
        \le
        C
        \norm{D v_1^i}_{F,\EUspace},
        \quad
        (i=0,1,2,\ldots),
    \]
    Consequently $\{v_1^i\}_{i=0}^\infty$ admits a subsequence, unrelabelled, 
    weakly* convergent in $\BVUspace$ to some $v \in \BVUspace$. 
    By the boundedness of the sequence
    $\{\norm{v_1^i}_{\Uspace}+\norm{Dv_1^i}_{F,\EUspace}\}\}_{i=0}^\infty$ and of
    $\{\norm{u^i}_{\Uspace}+\norm{Du^i}_{F,\EUspace}\}\}_{i=0}^\infty$
    it follows from $u^i=v_1^i+v_2^i$, moreover, that
    $\{\norm{v_2^i}_{\Uspace}+\norm{Dv_2^i}_{F,\EUspace}\}\}_{i=0}^\infty$ is bounded.
    Consequently, moving to a further subsequence, we may assume that
    $v_2^i \to v_2$ strongly in $W^{1,1}(\Omega)$ and
    $\grad v_2^i \weaktostar \grad v_2$ weakly* in $\EWspace$, for some $v_2 \in W^{1,1}(\Omega)$
    with $\grad v_2 \in \EWspace$.
    We clearly have $u=\lim u^i=\lim (v_1^i+v_2^i)=v_1+v_2$. Hence,
    by the lower semicontinuity of the Radon norm with respect
    to weak* convergence of measures, we obtain
    \[
        \begin{split}
        \ICTV_\alphavec(u) &
        \le
        \alpha \norm{Dv_1}_{F,\EUspace} + \beta \norm{D \grad v_2}_{F,\DWspace}
        \\
        &
        \le
        \liminf_{i \to \infty}
        \alpha \norm{Dv_1^i}_{F,\EUspace} + \beta \norm{D \grad v_2^i}_{F,\DWspace}
        \\
        &
        \le \liminf_{i \to \infty} \bigl(\ICTV_\alphavec(u^i) + 1/i\bigr)
        \\
        &
        = \liminf_{i \to \infty} \ICTV_\alphavec(u^i).
        \end{split}
    \]
    Thus $\ICTV_\alphavec$ is weak* lower semi-continuous, as claimed.
\end{proof}

\begin{proposition}
    Let $\Omega \subset \R^m$ be a bounded domain with Lipschitz boundary.
    Then $\ICTV_\alphavec$ is an admissible regularisation functional
    on $\Uspace$, and satisfies Assumption \ref{ass:comp}.
\end{proposition}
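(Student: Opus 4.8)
The plan is to first dispatch admissibility and then reduce the comparability estimate to the first-order result of Part 1. For admissibility, convexity of $\ICTV_\alphavec$ is immediate, since it is the infimal convolution of the two convex seminorms $\alpha\TV$ and $\beta\TV^2$, and an infimal convolution of convex functionals is convex. Weak* lower semicontinuity is precisely Lemma \ref{lemma:ictv-lsc}. For the coercivity bound \eqref{eq:r-tv-bound}, I would combine the comparison \eqref{eq:tgv2-ictv-tv}, which gives $\TGV^2_\alphavec(u) \le \ICTV_\alphavec(u)$, with the lower estimate in the norm equivalence \eqref{eq:tgv2-equivalence} of Lemma \ref{lemma:norm-equiv-tgv2}. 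Since $\norm{u}_{F,1}=\norm{u}_\Uspace$ for scalar $u$, this yields $\norm{Du}_{2,\EUspace} \le C(\norm{u}_\Uspace + \TGV^2_\alphavec(u)) \le C(\norm{u}_\Uspace + \ICTV_\alphavec(u))$, which is \eqref{eq:r-tv-bound}.

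For Assumption \ref{ass:comp} the key point is that, unlike for $\TGV^2$, the second-order variable of $\ICTV$ is a genuine gradient, so the reduction to $\TV$ sketched in the Introduction applies directly. A minimising decomposition exists by the compactness already exploited in the proof of Lemma \ref{lemma:ictv-lsc}, so I would fix $u = v^1 + v^2$ attaining $\ICTV_\alphavec(u) = \alpha\TV(v^1) + \beta\TV^2(v^2)$, with $v^2 \in W^{1,1}(\Omega)$ and $\grad v^2 \in \BVWspace$, and set $v \defeq v^2$. Because $v^2 \in W^{1,1}(\Omega)$ has $\H^{m-1}(S_{v^2})=0$, the requirement $x_0 \notin S_v$ of Definition \ref{def:lipschitz-trans} holds at $\L^m$-a.e.\ $x_0 \in \Omega$ and at $\H^{m-1}$-a.e.\ $x_0$ on any Lipschitz graph $\Gamma$, which is exactly what Assumption \ref{ass:comp}\ref{item:ass:comp-omega} and \ref{item:ass:comp-gamma} demand.

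With this choice $u - v = v^1$, and the partial pushforward reads $\partpf{\gamma}{u}{v} = \pf\gamma v^1 + v^2$, which is an admissible competitor in the infimum \eqref{eq:ictv}, since $\pf\gamma v^1 \in \BVspace(\Omega)$ while $v^2$ is unchanged. Hence $\ICTV_\alphavec(\partpf{\gamma}{u}{v}) \le \alpha\TV(\pf\gamma v^1) + \beta\TV^2(v^2)$ for $\gamma = \gammaone,\gammatwo$. Subtracting $2\ICTV_\alphavec(u) = 2\alpha\TV(v^1) + 2\beta\TV^2(v^2)$, the second-order terms cancel and I am left with
\[
    \ICTV_\alphavec(\partpf{\gammaone}{u}{v}) + \ICTV_\alphavec(\partpf{\gammatwo}{u}{v}) - 2\ICTV_\alphavec(u)
    \le \alpha\bigl(\TV(\pf\gammaone v^1) + \TV(\pf\gammatwo v^1) - 2\TV(v^1)\bigr).
\]
Now $\alpha\TV$ satisfies the (full) double-Lipschitz comparison \eqref{eq:double-lip} of Part 1, so the right-hand side is bounded by $\alpha\bitransfull{\gammaone}{\gammatwo}\abs{Dv^1}(\closure U) = \alpha\bitransfull{\gammaone}{\gammatwo}\abs{D(u-v)}(\closure U)$. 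This is exactly \eqref{eq:partial-comp} with $\RCa = \alpha$ and with vanishing remainder term, verifying both \ref{item:ass:comp-omega} (taking $\gamma \in \LipClass(\Omega, U)$) and \ref{item:ass:comp-gamma} (restricting to $\gamma \in \LipClass(\Omega, U, z_\Gamma)$).

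I do not expect a genuine obstacle. The entire difficulty of the higher-order analysis, namely the symmetrised gradient and BV-differentiability machinery of Proposition \ref{proposition:tgv2-admissibility}, simply does not arise because the $\ICTV$ second-order part is literally $\grad v^2$ for a $W^{1,1}$ function. The only point needing a little care is the existence of an \emph{exact} minimising decomposition; should one wish to avoid it, the same estimate runs along a $1/i$-minimising sequence with a limiting argument analogous to the smoothing step of Proposition \ref{proposition:tgv2-admissibility}, producing only a harmless extra error that $\epsilon$ absorbs.
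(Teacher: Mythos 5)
Your proof is correct, but for the comparability part it takes a genuinely different route from the paper. The paper proves Assumption \ref{ass:comp} by rerunning the whole machinery of Proposition \ref{proposition:tgv2-admissibility}: it replaces \eqref{eq:def-g-tgv} by $G(u,w) \defeq \alpha\norm{Du-w}_{2,\Meas(\Omega;\R^m)} + \beta\norm{Dw}_{F,\Meas(\Omega;\Tensor^2(\R^m))}$, fixes $w=\grad v_2$ and $v=v_2$ for a minimising decomposition $u=v_1+v_2$, and observes that the conclusions of Lemma \ref{lemma:bd-w-v-approx} hold trivially since $w-\grad v\equiv 0$ — so the $W^{1,1}$ computation, the smoothing approximation via Corollary \ref{corollary:w-smooth-approx}, and the weak* limit identification all go through without the $\TGV^2$ complications. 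You instead exploit the structural fact the paper only uses implicitly: with $v=v^2$ the partial pushforward is $\partpf{\gamma}{u}{v}=\pf\gamma v^1+v^2$, an admissible competitor in \eqref{eq:ictv}, so the second-order terms $\beta\TV^2(v^2)$ cancel outright and the estimate reduces to the full double-Lipschitz comparability \eqref{eq:double-lip} of $\TV$ from Part 1 applied to $v^1=u-v$. This is shorter and makes rigorous the reduction-to-$\TV$ idea sketched in the Introduction, whereas the paper's route stays uniform with its other higher-order proofs and never needs to invoke Part 1's $\TV$ theorem for general transformations. Two points you handle correctly but that deserve the care you gave them: Part 1's $\TV$ estimate must hold for arbitrary $\gammaone,\gammatwo\in\LipClass(\Omega,U)$ (it does, and its comparison constant is dominated by the Part 2 constant $\bitransfull{\gammaone}{\gammatwo}$, which only adds nonnegative terms, so $\RCa$ is a harmless fixed constant rather than literally $\alpha$); and the existence of an exact minimising decomposition, which indeed follows from the compactness argument inside Lemma \ref{lemma:ictv-lsc} — the paper assumes it with the same justification, and your $1/i$-minimising fallback would also work. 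Your admissibility argument (coercivity via \eqref{eq:tgv2-ictv-tv} and Lemma \ref{lemma:norm-equiv-tgv2}, lower semicontinuity via Lemma \ref{lemma:ictv-lsc}) coincides with the paper's.
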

\begin{proof}
    We have already proved that $\TGV_\alphavec^2$ satisfies the
    coercivity criterion \eqref{eq:r-tv-bound}. It immediately
    follows from \eqref{eq:tgv2-ictv-tv} that $\ICTV_\alphavec$ 
    also satisfies this. By Lemma \ref{lemma:ictv-lsc} ICTV
    is weak* lower semicontinuous in $\BVspace(\Omega)$.
    The rest of the conditions of Definition \ref{def:admissible}
    are obvious. Thus $\ICTV_\alphavec$ is admissible.
    Assumption \ref{ass:comp} can be proved following the proof 
    of Proposition \ref{proposition:tgv2-admissibility} as follows.
    Instead of \eqref{eq:def-g-tgv}, we define
    \begin{equation}
        \notag
        G(u, w) \defeq \alpha \norm{Du-w}_{2,\Meas(\Omega; \R^m)} + \beta \norm{Dw}_{F,\Meas(\Omega; \Tensor^2(\R^m)}.
    \end{equation}
    With $u=v_1+v_2$ a minimising decomposition in \eqref{eq:ictv},
    we set $w=\grad v_2$ and $v=v_2$, observing that
    the conclusions of Lemma \eqref{lemma:bd-w-v-approx} hold trivially
    for $v=v_2$ at every Lebesgue point $x$ of $v_2$. 
    Since $v_2 \in W^{1,1}(\Omega)$, this is in particular the case
    for $\H^{m-1}$-\ae $x \in \Gamma$
    for any given Lipschitz $(m-1)$-graph $\Gamma$.
    Thus we have no complications as in the case of $\TGV^2$.
    It follows that Assumption \ref{ass:comp} holds.
\end{proof}


\section{Limiting behaviour of $L^p$-$\TGV^2$}
\label{sec:lptgv2-bounds}

Having studied qualitatively the behaviour of the jump set
$J_u$, and obtained good results for variants of $\TGV^2$
although not $\TGV^2$ itself, we now want to study it
quantitatively. We let the second regularisation parameter
$\beta$ of $\TGV^2$ go to zero, and see what happens to
$D^s u$ for $u$ solution to the $L^p$-$\TGV^2$ regularisation
problem, namely
\begin{equation}
    \label{eq:lp-tgv2}
    \min_{u \in \BVUspace} \norm{f-u}_{L^p(\Omega)}^p + \TGV_\alphavec^2(u).
\end{equation}
The next proposition states our findings.

\begin{proposition}
    \label{prop:jump-mass-bound}
    Let $\alpha > 0$, $1 \le p < \infty$, and $f \in L^p(\Omega)$.
    Then for every $\epsilon > 0$ there exists $\beta_0 > 0$ such
    that any solution $u$ to \eqref{eq:lp-tgv2} satisfies
    \begin{equation}
        \label{eq:jump-mass-bound}
        \norm{D^s u}_{F,\EUspace} < \epsilon
        \quad
        \text{ for }
        \beta \in (0, \beta_0).
    \end{equation}
\end{proposition}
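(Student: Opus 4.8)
The plan is to bound $\norm{D^s u}_{F,\EUspace}$ from above by a quantity that can be driven below $\epsilon$ by first freezing a smooth $L^p$-approximation of $f$ and only afterwards shrinking $\beta$. Two ingredients are needed: a $\beta$-independent lower bound on $\TGV^2_{\alphavec}(u)$ in terms of $\norm{D^s u}$, and a $\beta$-proportional upper bound on $\TGV^2_{\alphavec}$ at a smooth competitor.

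First I would record the lower bound
\[
    \TGV^2_{\alphavec}(u) \ge \alpha \norm{D^s u}_{F,\EUspace}.
\]
This is immediate from the cascade definition \eqref{eq:tgv2-scalar-cascade}: for any admissible $w \in L^1(\Omega; \R^m)$, subtracting the absolutely continuous density $w\,\L^m$ from $Du$ leaves the singular part unchanged, so the measure $Du-w$ has singular part exactly $D^s u$ and therefore $\norm{Du-w}_{2,\EUspace} \ge \norm{D^s u}_{2,\EUspace} = \norm{D^s u}_{F,\EUspace}$. Discarding the nonnegative term $\beta\norm{Ew}_{F,\EWspace}$ and minimising over $w$ gives the claim; crucially, this bound does not involve $\beta$.

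Next I would construct a smooth competitor. Since $1 \le p < \infty$, the space $C_c^\infty(\Omega)$ is dense in $L^p(\Omega)$, so for any $\delta>0$ I may pick $g \in C_c^\infty(\Omega)$ with $\norm{f-g}_{L^p(\Omega)}^p < \delta$. Choosing $w=\grad g$ in \eqref{eq:tgv2-scalar-cascade} gives $Dg-w=0$ and $Ew=\nabla^2 g$, the (already symmetric) Hessian, whence
\[
    \TGV^2_{\alphavec}(g) \le \beta \norm{\nabla^2 g}_{F,1} =: \beta C_g,
    \qquad C_g < \infty .
\]

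Finally I would invoke optimality of $u$ and chain the two bounds. Comparing $u$ with $g$ in \eqref{eq:lp-tgv2} and dropping the nonnegative fidelity on the left,
\[
    \alpha \norm{D^s u}_{F,\EUspace} \le \TGV^2_{\alphavec}(u)
    \le \norm{f-g}_{L^p(\Omega)}^p + \TGV^2_{\alphavec}(g)
    < \delta + \beta C_g.
\]
The only point requiring care -- and the whole substance of the argument -- is the order of the quantifiers. Given $\epsilon>0$ I would first choose $\delta < \alpha\epsilon/2$, which fixes $g$ and hence the constant $C_g$, and only then choose $\beta_0 < \alpha\epsilon/(2C_g)$; for $\beta\in(0,\beta_0)$ the right-hand side is smaller than $\alpha\epsilon$, yielding $\norm{D^s u}_{F,\EUspace}<\epsilon$. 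The key observation that makes the decoupling legitimate is that although $C_g$ typically blows up as $\delta\downto 0$ (approximating a rough $f$ more closely in $L^p$ forces larger curvature), it is always multiplied by $\beta$ and is therefore harmless once $g$ has been frozen ahead of the choice of $\beta_0$. There is no deeper analytic obstacle.
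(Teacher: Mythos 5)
Your proof is correct, but it takes a genuinely different route from the paper's. You compare $u$ against a fixed smooth competitor $g \approx f$ in $L^p$ and exploit the $\beta$-independent coercivity $\TGV^2_{\alphavec}(u) \ge \alpha \norm{D^s u}_{F,\EUspace}$ — valid because every admissible $w$ in \eqref{eq:tgv2-scalar-cascade} is an $L^1$ function, so $w\L^m$ is absolutely continuous and cannot cancel any of the singular part — together with $\TGV^2_{\alphavec}(g) \le \beta \norm{\nabla^2 g}_{F,1}$; your quantifier order (fix $\delta$ and hence $g$ first, then shrink $\beta$) is exactly right and makes $\beta_0$ depend only on $f$, $\alpha$, $\epsilon$, hence uniform over all solutions, as the statement requires. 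The paper instead perturbs the solution itself: it mollifies the optimal pair $(u, w)$, takes the competitor $u_\tau = \rho_\tau * u$ with $\tilde w = \grad u_\tau$, and estimates $\norm{E \tilde w}_{F,\DWspace} \le C \inv\tau \norm{Du - w}_{F,\EUspace} + \norm{Ew}_{F,\DWspace}$ via $E w_\tau = \Eabs \rho_\tau * w$, so that optimality of $u$ forces $\norm{Du - w}_{F,\EUspace} \le \delta/(\alpha - C\beta\inv\tau_\delta)$ and then $\norm{D^s u}_{F,\EUspace} \le \norm{Du-w}_{F,\EUspace}$ finishes. What the paper's construction buys is a strictly stronger intermediate conclusion — the \emph{whole} residual $\norm{Du-w}_{F,\EUspace}$, not only its singular part, is driven to zero — and an argument that localises (one can mollify only near a subdomain via a cutoff), which your global comparison cannot do, since replacing $u$ by $g$ discards all information about the particular solution. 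What your argument buys is brevity and transparency: it avoids the mollifier-derivative estimate and its $\inv\tau$ bookkeeping entirely, and it isolates the two structural facts ($\beta$-free coercivity on $D^s u$, and $O(\beta)$ cost of $\TGV^2$ at smooth functions) that explain why the singular part must vanish in the limit $\beta \downto 0$.
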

\begin{proof}
    Let $\{\rho_\tau\}_{\tau > 0}$ be the standard
    family of mollifiers on $\R^m$, and use the 
    notation
    \[
        u_\tau \defeq \rho_\tau * u,
        \quad
        \text{ and }
        \quad
        w_\tau \defeq \rho_\tau * w
    \]
    for mollified functions, where $w$ minimises
    \eqref{eq:tgv2-scalar-cascade} for $u$. Then
    \[
        \begin{split}
        \norm{f-u_\tau}_{L^p(\Omega)}
        &
        \le
        \norm{f_\tau-u_\tau}_{L^p(\Omega)}
        +
        \norm{f-f_\tau}_{L^p(\Omega)}
        \\
        &
        \le
        \norm{f-u}_{L^p(\Omega)}
        +
        \norm{f-f_\tau}_{L^p(\Omega)}.
        \end{split}
    \]
    Let $\delta > 0$ be arbitrary.
    Since $\norm{f-f_\tau}_{L^p(\Omega)} \to 0$,
    we deduce the existence of $\tau_\delta > 0$ such
    that
    \[
        \norm{f-u_\tau}_{L^p(\Omega)}^p
        \le
        \norm{f-u}_{L^p(\Omega)}^p
        +
        \delta,
        \quad
        \text{ for }
        \tau \in (0, \tau_\delta].
    \]
    It can easily be shown by application of Green's identities
    that the symmetric differential operator $E$ satisfies
    \[
        E w_\tau=\rho_\tau * Ew = \Eabs \rho_\tau * w,
    \]
    similarly to corresponding well known results on the operator $D$.
    With
    \[
        \tilde w \defeq \grad u_\tau
    \]
    we thus obtain for some constant $C > 0$ the estimate
    \[
        \begin{split}
        \norm{E \tilde w}_{F,\DWspace}
        & 
        \le
        \norm{E \tilde w - E w_\tau}_{F,\DWspace}
        +
        \norm{E w_\tau}_{F,\DWspace}
        \\
        &
        \le
        \norm{\Eabs \rho_\tau * (D u - w)}_{F,\DWspace}
        +
        \norm{E w}_{F,\DWspace}
        \\
        &
        \le
        C \inv \tau
        \norm{D u - w}_{F,\EUspace}
        +
        \norm{E w}_{F,\DWspace}.
        \end{split}
    \]
    As
    \[
        \norm{D u_\tau - \tilde w}_{F,\EUspace}=0,
    \]
    it follows that
    \[
        \begin{split}
            \norm{f-u_\tau}_{L^p(\Omega)}^p + \TGV_\alphavec^2(u_\tau)
            &
            \le
            \norm{f-u_\tau}_{L^p(\Omega)}^p
            +
            \alpha \norm{D u_\tau - \tilde w}_{F,\EUspace}
            +
            \beta \norm{E \tilde w} 
            \\
            &
            \le
            \norm{f-u}_{L^p(\Omega)}^p
            +
            \delta
            +
            C \beta \inv \tau
            \norm{D u - w}_{F,\DWspace}
            \\
            &
            \phantom{\le}
            +
            \beta \norm{E w}_{F,\DWspace}
            \\
            &
            \le
            \norm{f-u}_{L^p(\Omega)}^p + \TGV_\alphavec^2(u)
            \\
            &
            \phantom{\le}
            +
            (C \beta \inv \tau -\alpha)
            \norm{D u - w}_{F,\EUspace}
            +
            \delta.
        \end{split}
    \]
    Consequently $u_\tau$ provides a contradiction to $u$
    being a solution to \eqref{eq:lp-tgv2} if
    \[
        \delta
        <
        (\alpha - C \beta \inv \tau)
        \norm{D u - w}_{F,\EUspace}.
    \]
    Since
    \[
        \norm{D^s u}_{F,\EUspace} \le \norm{D u - w}_{F,\EUspace},
    \]
    it follows that for an optimal solution $u$, it must hold
    \[
        \norm{D^s u}_{F,\EUspace} 
        \le \delta/(\alpha - C \beta \inv \tau_\delta)
    \]
    Thus \eqref{eq:jump-mass-bound} holds if
    \[
        \delta + C \beta \inv \tau_\delta \epsilon < \epsilon \alpha.
    \]
    Choosing $\delta < \epsilon \alpha$, we find
    $\beta_0 > 0$ such that this is satisfied for $\beta \in (0, \beta_0)$.
\end{proof}

We illustrate numerically in Figure \ref{fig:square-example-l1-10} to 
Figure \ref{fig:square-example-l2-10} the implications of 
Proposition \ref{prop:jump-mass-bound} and Theorem \ref{theorem:jumpset-strict} 
on a very simple test image with a square in the middle.
We did the experiments for fixed $\alpha=10$ or $\alpha=5$ 
and varying $\beta$, with fidelity $\phi(t)=t^p$ for $p=1$ and $p=2$.
In all cases, as $\beta$ goes down from a large value with
good reconstruction, the image first starts to smooth out.
This happens until the smallest $\beta$, for which we 
appear to have recovered $f$! This may seem a little counterintuitive,
as Proposition \ref{prop:jump-mass-bound} forbids big jumps 
for small $\beta$. But we should indeed have very steep gradients near the
boundary. These are lost in the discretisation.

Besides this, the numerical experiments verify $\H^{m-1}(J_u \setminus J_f)$
for $p=2$, and demonstrate the fact that it does not hold for $p=1$.
However, the set $J_u \setminus J_f$ has specific curvature dependent
on the parameter $\alpha$. For $p=2$, we of course observe the
well-known phenomenon of contrast loss. In the corners
where $p=1$ starts to produce new jumps, $p=2$ starts to smooth
out the solution, also not reconstructing the jumps of the corners.

\setlength{\w}{0.18\textwidth}

\begin{figure}
    \centering
    \input{test/square-example-l1-10.tex}
    \caption{Illustration of varying $\beta$ parameter for $\TGV^2$
        regularisation with $L^1$ fidelity on $f=\chi_{(-32, 32)^2}$. 
        For $\beta=0.1$ in \subref{fig:square-example-l1-10-1} it appears that we have
        recovered  $f$. The apparent full recovery in \subref{fig:square-example-l1-10-1} for $\beta=0.1$
        is an effect  of the discretisation. For $\beta=50$ in \subref{fig:square-example-l1-10-8} 
        due to numerical difficulties we have not fully recovered the corners 
        (of curvature $1/\alpha=0.2$) that should start to become sharp.
    }
    \label{fig:square-example-l1-10}
\end{figure}

\begin{figure}
    \centering
    \input{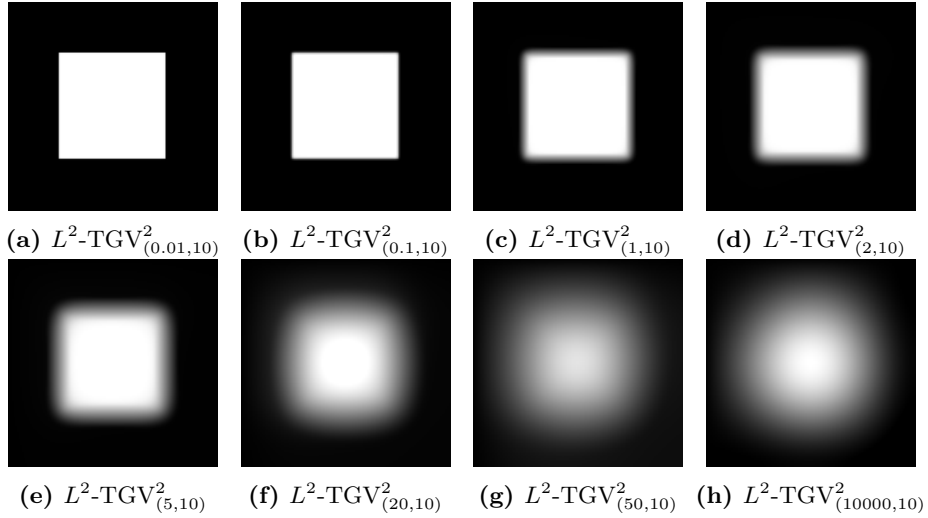}
    \caption{Illustration for $\alpha=10$ of varying $\beta$ parameter for $\TGV^2$
        regularisation with squared $L^2$ fidelity on $f=\chi_{(-32, 32)^2}$,
        to compare with Figure \ref{fig:square-example-l1-10} for $L^1$ fidelity.
    }
    \label{fig:square-example-l2-10}
\end{figure}

\begin{figure}
    \centering
    \input{test/square-example-l1-5.tex}
    \caption{Illustration for $\alpha=5$ of varying $\beta$ parameter for $\TGV^2$
        regularisation with $L^1$ fidelity on $f=\chi_{(-32, 32)^2}$.
    }
    \label{fig:square-example-l1-5}
\end{figure}

\begin{figure}
    \centering
    \input{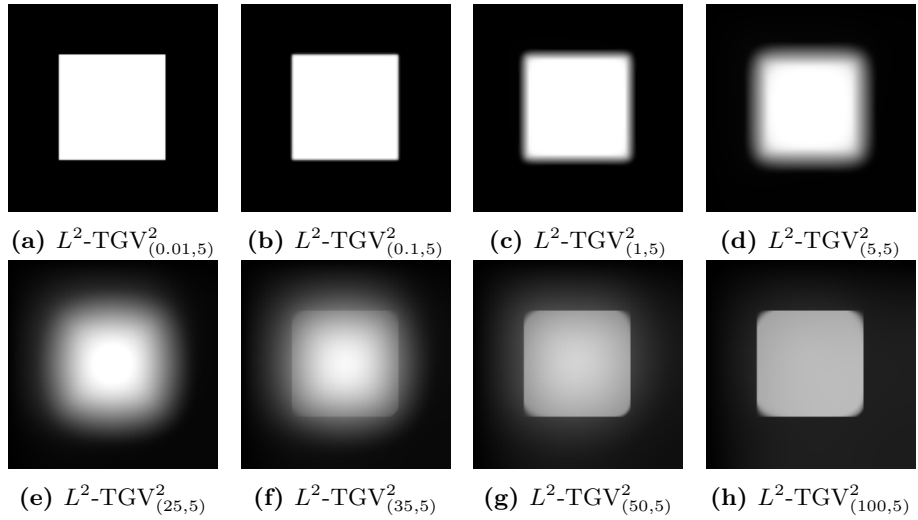}
    \caption{Illustration for $\alpha=5$ of varying $\beta$ parameter for $\TGV^2$
        regularisation with squared $L^2$ fidelity on $f=\chi_{(-32, 32)^2}$.
    }
    \label{fig:square-example-l2-5}
\end{figure}

\section{Conclusion}
\label{sec:conclusion}

In this pair of papers, we have provided a new technique for
studying the jump sets of a general class of regularisation
functionals, not dependent on the co-area formula as existing
results for $\TV$ are. In the case that the fidelity $\phi$ is 
$p$-increasing for $p>1$, besides $\TV$, we proved in Part 1 the
property $\H^{m-1}(J_u \setminus J_f)=0$ for $u$ a solution 
to \eqref{eq:prob} for Huber-regularised total variation.
We also demonstrated in Part 1 that the technique would apply to
non-convex $\TV$ models and the Perona-Malik anisotropic diffusion, 
if these models were well-posed, and had solutions in $\BVspace(\Omega)$.
For variants of $\TGV^2$ using $L^q$, ($q>1$), energies for the
second-order component, we proved that the jump set containment property
holds if the solution $u$ is locally bounded. For $\TGV^2$ itself, we 
obtained much weaker results, depending additionally on the 
differentiability assumptions of Lemma \ref{lemma:bd-w-v-approx} on
the minimising second-order variable $w$.
The two most important further questions that these studies pose are 
whether the assumptions of Lemma \ref{lemma:bd-w-v-approx} on $w$ can be
proved for $\TGV^2$, and whether the local boundedness of the solution
$u$ can be proved for higher-order regularisers in general. In the 
first-order cases this was no work at all.

\section*{Acknowledgements}

This manuscript has been prepared over the course of several years,
exploiting funding from various more short-term projects.
While the author was at the Institute for Mathematics and Scientific 
Computing at the University of Graz, this work was financially supported 
by the SFB research program F32 ``Mathematical Optimization and Applications in 
Biomedical Sciences'' of the Austrian Science Fund (FWF). 
While at the Department of Applied Mathematics and Theoretical Physics
at the University of Cambridge, this work was financially supported by the 
King Abdullah University of Science and Technology (KAUST) Award No.~KUK-I1-007-43 
as well as the EPSRC / Isaac Newton Trust Small Grant ``Non-smooth geometric 
reconstruction for high resolution MRI imaging of fluid transport
in bed reactors'', and the EPSRC first grant Nr.~EP/J009539/1
``Sparse \& Higher-order Image Restoration''.
At the Research Center on Mathematical Modeling (Modemat) at the
Escuela Politécnica Nacional de Quito, this work has been
supported by the Prometeo initiative of the Senescyt.

The author would like to express sincere gratitude to Simon Morgan, 
Antonin Chambolle, Kristian Bredies, and Carola-Bibiane Schönlieb
for fruitful discussions.

 \providecommand{\homesiteprefix}{http://iki.fi/tuomov/mathematics}
  \providecommand{\eprint}[1]{\href{http://arxiv.org/abs/#1}{arXiv:#1}}


\appendix

\end{document}